\newtheorem{thm}{Theorem}[section]
\newtheorem{lem}[thm]{Lemma}
\newtheorem{cor}[thm]{Corollary}
\newtheorem{prop}[thm]{Proposition}
\theoremstyle{definition}
\newtheorem{example}[thm]{Example}
\newtheorem{defn}[thm]{Definition}
\newtheorem{rem}[thm]{Remark}
\numberwithin{equation}{thm}
\begin{document}
\title[abelian quotients  via morphism categories]
{abelian quotients arising from extriangulated categories via morphism categories}

\author{Zengqiang Lin}
\address{ School of Mathematical sciences, Huaqiao University,
Quanzhou\quad 362021,  China.} \email{lzq134@163.com}

\thanks{This work was supported by the national natural science foundation of China (Grants No. 11871014 and No. 11871259)}

\subjclass[2010]{18E30}

\keywords{extriangulated categories; abelian categories.}

\begin{abstract}
We investigate abelian quotients arising from extriangulated categories via morphism categories, which is
  a unified treatment for both exact categories and triangulated categories. Let $(\mathcal{C},\mathbb{E},\mathfrak{s})$ be an extriangulated category with enough projectives $\mathcal{P}$ and $\mathcal{M}$ be a full subcategory of $\mathcal{C}$ containing $\mathcal{P}$. We show that certain quotient category of $\mathfrak{s}\textup{-def}(\mathcal{M})$, the category of $\mathfrak{s}$-deflations $f:M_{1}\rightarrow M_2$ with $M_1,M_2\in\mathcal{M}$, is abelian. Our main theorem has two applications. If $\mathcal{M}=\mathcal{C}$, we obtain that certain ideal quotient category $\mathfrak{s}\textup{-tri}(\mathcal{C})/\mathcal{R}_2$ is equivalent to the category of finitely presented modules $\textup{mod-}\mathcal{C}/[\mathcal{P}]$, where $\mathfrak{s}$-tri$(\mathcal{C})$ is the category of all $\mathfrak{s}$-triangles. If $\mathcal{M}$ is a rigid subcategory, we show that  $\mathcal{M}_{L}/[\mathcal{M}]\cong\textup{mod-}(\mathcal{M}/[\mathcal{P}])$ and $\mathcal{M}_{L}/[\Omega\mathcal{M}]\cong(\textup{mod-}(\mathcal{M}/[\mathcal{P}])^{\textup{op}})^{\textup{op}}$, where $\mathcal{M}_L$ (resp. $\Omega\mathcal{M}$) is the full subcategory of $\mathcal{C}$  of objects $X$ admitting an $\mathfrak{s}$-triangle $\xymatrixrowsep{0.1pc}\xymatrix{X\ar[r]&M_1\ar[r] & M_2\ar@{-->}[r]&} (\textup{resp.} \xymatrixrowsep{0.1pc}\xymatrix{X\ar[r]&P\ar[r] & M\ar@{-->}[r]&})$
   with $M_1, M_2\in\mathcal{M}$ (resp. $M\in\mathcal{M}$ and $P\in\mathcal{P}$).
   In particular, we  have $\mathcal{C}/[\mathcal{M}]\cong\textup{mod-}(\mathcal{M}/[\mathcal{P}])$ and $\mathcal{C}/[\Omega\mathcal{M}]\cong(\textup{mod-}(\mathcal{M}/[\mathcal{P}])^{\textup{op}})^{\textup{op}}$ provided that $\mathcal{M}$ is a cluster-tilting subcategory.
\end{abstract}

\maketitle

\section{introduction}

In representation theory, there are a few quotient categories admitting natural abelian structures.
For both triangulated categories and exact categories, cluster-tilting subcategories  provide a way to construct abelian quotient categories.
Let $\mathcal{C}$ be a triangulated category and $\mathcal{T}$ be a cluster-tilting subcategory of $\mathcal{C}$, then the quotient $\mathcal{C}/[\mathcal{T}]$ is abelian; related works see \cite{[BMR],[KR],[KZ]}. The version of exact categories see \cite{[DL]}.
Submodule categories provide another way to construct abelian quotient categories. Certain quotients of submodule categories are realized as categories of finitely presented modules over stable Auslander algebras \cite{[RZ],[Eir]}. More generally, some quotients of  categories of short exact sequences in exact categories are abelian, related works see \cite{[Gen],[Lin]}. For triangulated version, certain quotients of categories of triangles are abelian \cite{[Nee]}.

Recently, Nakaoka and Palu introduced the notion of extriangulated categories \cite{[NP]}, which is  a simultaneous generalization of exact categories and triangulated categories. They pointed out that the notion is a convenient setup for writing down proofs which apply to both exact categories and triangulated categories. For recent developments on extriangulated categories we refer to \cite{[INP],[LN],[LZ1],[LZ2],[ZZ]} etc.

 In this paper, we focus our attention onto the abelian quotients arising from extriangulated categories via morphism categories, which is
  a unified treatment of abelian quotients for both exact categories and triangulated categories. Our approach to abelian quotients is based on identifying quotients of morphism categories as certain module categories.

Let $(\mathcal{C},\mathbb{E},\mathfrak{s})$ be an extriangulated category and $\mathcal{M}$ be a full subcategory of $\mathcal{C}$. We denote by $\textup{Mor}(\mathcal{M})$ the morphism category of $\mathcal{M}$ and by $\mathfrak{s}$-def$(\mathcal{M})$ (resp. $\mathfrak{s}\textup{-inf}(\mathcal{M})$) the full subcategory of $\textup{Mor}(\mathcal{M})$ consisting of $\mathfrak{s}$-deflations (resp. $\mathfrak{s}$-inflations). The full subcategory of $\mathfrak{s}$-def$(\mathcal{M})$ consisting of split epimorphisms (resp. split monomorphisms) is denoted by s-epi$(\mathcal{M})$ (resp. s-mono$(\mathcal{M})$).  We denote by sp-epi$(\mathcal{M})$ (resp. si-mono$(\mathcal{M})$) the full subcategory of $\mathfrak{s}$-def$(\mathcal{M})$ consisting of $(M\xrightarrow{1}M)\oplus(P\rightarrow M')$ (resp. $(M\xrightarrow{1}M)\oplus(M'\rightarrow I)$) with $P\in\mathcal{P}$ (resp. $I\in\mathcal{I}$).

Our main theorem is the following (Theorem \ref{3.1}), which generalizes \cite[Theorem 3.9]{[Lin]}.

\begin{thm}\label{thm1.1}
Let $\mathcal{C}$ be an extriangulated category and $\mathcal{M}$ be a full subcategory of $\mathcal{C}$.

\textup{(1)} If $\mathcal{C}$ has enough projectives $\mathcal{P}$ and $\mathcal{M}$ contains $\mathcal{P}$, then   $\mathfrak{s}\textup{-def}(\mathcal{M})/[\textup{s-epi}(\mathcal{M})]\cong \textup{mod-}\mathcal{M}/[\mathcal{P}]$ and  $\mathfrak{s}\textup{-def}(\mathcal{M})/[\textup{sp-epi}(\mathcal{M})]\cong (\textup{mod-}(\mathcal{M}/[\mathcal{P}])^{\textup{op}})^{\textup{op}}$.

\textup{(2)} If $\mathcal{C}$ has enough injectives $\mathcal{I}$ and $\mathcal{M}$ contains $\mathcal{I}$, then
$\mathfrak{s}\textup{-inf}(\mathcal{M})/[\textup{s-mono}(\mathcal{M})]\cong (\textup{mod-}(\mathcal{M}/[\mathcal{I}])^{\textup{op}})^{\textup{op}}$ and  $\mathfrak{s}\textup{-inf}(\mathcal{M})/[\textup{si-mono}(\mathcal{M})]\cong \textup{mod-}\mathcal{M}/[\mathcal{I}]$.
\end{thm}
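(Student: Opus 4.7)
The plan is to construct, for the first equivalence of part (1), a functor
$$F: \mathfrak{s}\textup{-def}(\mathcal{M}) \longrightarrow \textup{mod-}(\mathcal{M}/[\mathcal{P}])$$
sending an $\mathfrak{s}$-deflation $f: M_1 \to M_2$ to the cokernel of the induced map on contravariant representables $(\mathcal{M}/[\mathcal{P}])(-, M_1) \to (\mathcal{M}/[\mathcal{P}])(-, M_2)$, and then to show $F$ descends to an equivalence with $\mathfrak{s}\textup{-def}(\mathcal{M})/[\textup{s-epi}(\mathcal{M})]$. The second equivalence will be obtained by a dual construction via the covariant Hom-cokernel $\textup{coker}((\mathcal{M}/[\mathcal{P}])(M_2, -) \to (\mathcal{M}/[\mathcal{P}])(M_1, -))$; the matching ideal there is the smaller $\textup{sp-epi}(\mathcal{M})$, since identities kill cokernels of identity maps and objects $(P \to M')$ with $P \in \mathcal{P}$ are also killed because $(\mathcal{M}/[\mathcal{P}])(P, -) = 0$. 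Part (2) then follows by applying part (1) to the opposite extriangulated category $(\mathcal{C}^{\textup{op}}, \mathbb{E}^{\textup{op}}, \mathfrak{s}^{\textup{op}})$, in which projectives and injectives swap roles.

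I would first observe that $F$ kills every object of $\textup{s-epi}(\mathcal{M})$ (Yoneda sends a split epimorphism to a split surjection of representables), so $F$ descends. For essential surjectivity, any $X \in \textup{mod-}(\mathcal{M}/[\mathcal{P}])$ admits a presentation coming from some $g: N_1 \to N_2$ in $\mathcal{M}$; pick a deflation $p: P \to N_2$ with $P \in \mathcal{P}$ (possible by enough projectives) and set $f = (g, p): N_1 \oplus P \to N_2$, which is a deflation. Since $(\mathcal{M}/[\mathcal{P}])(-, P) = 0$ for $P \in \mathcal{P}$, the cokernel $F(f)$ coincides with $X$.

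Fullness is a projectivity-of-representables argument: given $\phi: F(f) \to F(f')$, lift through the presentation of $F(f')$ to some $\bar b: M_2 \to M_2'$ in $\mathcal{M}/[\mathcal{P}]$ inducing $\phi$, then lift $\bar b$ to $b: M_2 \to M_2'$ in $\mathcal{M}$. The resulting square commutes only modulo $[\mathcal{P}]$, so $bf = f'a + \beta\alpha$ for some $a$ and $\beta\alpha$ factoring through $P \in \mathcal{P}$. Because $P$ is projective and $f'$ is a deflation, $\beta$ lifts to $\gamma: P \to M_1'$ with $f'\gamma = \beta$, and absorbing $\gamma\alpha$ into $a$ yields a bona fide morphism of deflations $(a, b): f \to f'$ realizing $\phi$.

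Faithfulness is the step I expect to be the main obstacle. If $F(a, b) = 0$ then $b = f'c + \beta\alpha$ with $\beta: P \to M_2'$, $\alpha: M_2 \to P$, $P \in \mathcal{P}$; lifting $\beta$ through $f'$ to $\gamma: P \to M_1'$, decompose
$$(a, b) = (cf, f'c) + (\gamma\alpha f, \beta\alpha) + (\delta, 0),$$
where $\delta := a - cf - \gamma\alpha f$ satisfies $f'\delta = 0$. The first two summands factor through the split epimorphisms $(\textup{id}_{M_2})$ and $(\textup{id}_{P})$ respectively. The delicate residual $(\delta, 0)$ must be routed through the split epimorphism $(M_1 \to 0) \in \textup{s-epi}(\mathcal{M})$ via the morphisms $(\textup{id}_{M_1}, 0)$ and $(\delta, 0)$; the availability of $0$ as an object of $\mathcal{M}$ hinges on $0 \in \mathcal{P} \subseteq \mathcal{M}$. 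Recognizing this $(M_1 \to 0)$ trick, which absorbs precisely the kernel-of-$f'$ information that cannot be handled by the earlier factorizations, is the main categorical observation on which the proof turns.
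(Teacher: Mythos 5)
Your handling of the first equivalence in (1) is correct and is in substance the paper's own proof with its Lemma \ref{lem2.1} (the identification of $\textup{Mor}$ of an additive category modulo $\mathcal{R}$ with the category of finitely presented modules) unfolded into the argument: density via the deflation $(g,p):N_1\oplus P\to N_2$, fullness by absorbing the projective defect $bf-f'a$ into $a$ using that $f'$ is an $\mathfrak{s}$-deflation and $P$ is projective, and your three-term decomposition ending with the object $M_1\to 0$ is precisely why the ideal of morphisms whose second component factors through $f'$ coincides with $[\textup{s-epi}(\mathcal{M})]$ (this is Lemma \ref{lem3.0}(1) combined with Lemma \ref{lem2.1}(1) in the paper). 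Deducing (2) from (1) by passing to the opposite extriangulated category is also what the paper does.

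The gap is in the second equivalence of (1). You verify only the easy inclusion, namely that $G(f)=\textup{coker}\,(\mathcal{M}/[\mathcal{P}])(\underline{f},-)$ kills the objects of $\textup{sp-epi}(\mathcal{M})$. The substantive half is the converse: if $G(a,b)=0$, equivalently $\underline{a}=\underline{p}\,\underline{f}$ for some $\underline{p}:M_2\to M_1'$ in $\mathcal{M}/[\mathcal{P}]$, then $(a,b)$ must factor through an object of $\textup{sp-epi}(\mathcal{M})$. This is not a formal dual of your first argument, and ``dual construction'' does not produce it: after splitting off $(pf,f'p)$, which factors through $\mathrm{id}_{M_2}$, the residual summand $(a-pf,\,b-f'p)$ has first component factoring through some projective $Q$, and it has to be routed through an object of the form $(P\to M_2')$ with $P\in\mathcal{P}$ that is itself an $\mathfrak{s}$-deflation between objects of $\mathcal{M}$. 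The paper's Lemma \ref{lem3.0}(2) does this by choosing an $\mathfrak{s}$-deflation $a_1:P\to M_1'$ with $P\in\mathcal{P}$ (using ``enough projectives'' a second time), writing $a-pf=a_1a_2$ by projectivity of $Q$, and factoring the residual through $(P\xrightarrow{f'a_1}M_2')$, which lies in $\textup{sp-epi}(\mathcal{M})$ because a composite of $\mathfrak{s}$-deflations is an $\mathfrak{s}$-deflation. Without some such construction the second equivalence is unproved. A minor correction: $\textup{sp-epi}(\mathcal{M})$ is not ``smaller'' than $\textup{s-epi}(\mathcal{M})$; the two ideals are incomparable in general, e.g.\ in a triangulated category $\mathcal{P}=0$ and $(M\xrightarrow{1}M)\oplus(0\to M')$ is not a split epimorphism unless $M'=0$.
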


Theorem \ref{thm1.1} has two interesting applications. We will investigate two special cases when $\mathcal{M}=\mathcal{C}$ and when $\mathcal{M}$ is {\em rigid}, that is, $\mathbb{E}(M,M')=0$ for any $M,M'\in\mathcal{M}$.

For the first case, we denote by $\mathfrak{s}$-tri$(\mathcal{C})$ the category of all $\mathfrak{s}$-triangles, where the objects are the $\mathfrak{s}$-triangles $X_\bullet=(\xymatrix{X_1\ar[r]^{f_1}&X_2\ar[r]^{f_2} & X_3\ar@{-->}[r]^{\delta}&})$ and the morphisms from $X_\bullet$ to $Y_\bullet$ are the triples $\varphi_\bullet=(\varphi_1,\varphi_2,\varphi_3)$ such that the following diagram
$$\xymatrix{
X_1 \ar[r]^{f_1} \ar[d]^{\varphi_1} & X_2 \ar[r]^{f_2}\ar[d]^{\varphi_2} & X_3 \ar@{-->}[r]^{\delta}\ar[d]^{\varphi_3} & \\
Y_1\ar[r]^{g_1} & Y_2\ar[r]^{g_2} & Y_3\ar@{-->}[r]^{\delta'} &
}$$
is a morphism of $\mathfrak{s}$-triangles.
 Let $X_\bullet$ and $Y_\bullet$ be two $\mathfrak{s}$-triangles, we denote by $\mathcal{R}_2(X_\bullet,Y_\bullet)$
  the class of morphisms $\varphi_\bullet: X_\bullet\rightarrow Y_\bullet$ such that $\varphi_3$ factors through $g_2$. 
   It is easy to see that $\mathcal{R}_2$ is an ideal of $\mathfrak{s}\textup{-tri}(\mathcal{C})$, moreover, the following three quotient categories $\mathfrak{s}\textup{-tri}(\mathcal{C})/\mathcal{R}_2,\mathfrak{s}\textup{-def}(\mathcal{C})/[\textup{s-epi}(\mathcal{C})]$ and $\mathfrak{s}\textup{-inf}(\mathcal{C})/[\textup{s-mono}(\mathcal{C})]$ are equivalent.

 Given an $\mathfrak{s}$-triangle  $\delta=(\xymatrix{X_1\ar[r]^{f_1}&X_2\ar[r]^{f_2} & X_3\ar@{-->}[r]^{\rho}&})$, we define  the {\em contravariant defect} $\delta^\ast$  and the {\em covariant defect} $\delta_\ast$ by the following exact sequence of functors
$$\mathcal{C}(-,X_{1})\xrightarrow{\mathcal{C}(-,f_{1})}\mathcal{C}(-,X_{2})\xrightarrow{\mathcal{C}(-,f_{2})}\mathcal{C}(-,X_3)\rightarrow \delta^\ast\rightarrow 0,$$
$$\mathcal{C}(X_{3},-)\xrightarrow{\mathcal{C}(f_{2},-)}\mathcal{C}(X_{2},-)\xrightarrow{\mathcal{C}(f_{1},-)}\mathcal{C}(X_1,-)\rightarrow \delta_\ast\rightarrow 0.$$

Our first application of Theorem \ref{thm1.1} is the following (Theorem \ref{thm3.2}, Proposition \ref{prop1} and Theorem \ref{rem3.2}), which generalizes \cite[Theorem 4.1, Theorem 4.8, Theorem 5.1]{[Lin]}.

\begin{thm}
Let $\mathcal{C}$ be an extriangulated category.

\textup{(1)} The quotient $\mathfrak{s}$-tri$(\mathcal{C})/\mathcal{R}_2$ is abelian.

\textup{(2)} If $\mathcal{C}$ has enough projectives $\mathcal{P}$, then we have the following equivalence $$F:\mathfrak{s}\textup{-tri}(\mathcal{C})/\mathcal{R}_2\cong \textup{mod-}\mathcal{C}/[\mathcal{P}],\delta\mapsto \delta^*.$$

\textup{(3)} If $\mathcal{C}$ has enough injectives $\mathcal{I}$, then we have the following equivalence $$G:\mathfrak{s}\textup{-tri}(\mathcal{C})/\mathcal{R}_2\cong (\textup{mod-}(\mathcal{C}/[\mathcal{I}])^{\textup{op}})^{\textup{op}},\delta\mapsto\delta_*.$$
\end{thm}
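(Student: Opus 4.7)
The strategy is to leverage Theorem~\ref{thm1.1} via the equivalence between $\mathfrak{s}\textup{-tri}(\mathcal{C})/\mathcal{R}_2$ and the morphism-category quotients asserted in the excerpt. My first step would be to pin down this equivalence: define a functor $\Phi\colon\mathfrak{s}\textup{-tri}(\mathcal{C})\to\mathfrak{s}\textup{-def}(\mathcal{C})$ sending an $\mathfrak{s}$-triangle $\delta$ with deflation $f_2\colon X_2\to X_3$ to the object $f_2$, and a morphism $(\varphi_1,\varphi_2,\varphi_3)$ to $(\varphi_2,\varphi_3)$. Essential surjectivity is immediate since every $\mathfrak{s}$-deflation completes to an $\mathfrak{s}$-triangle, and fullness (modulo the relevant ideals) uses the standard lifting axiom allowing a partial morphism of $\mathfrak{s}$-triangles on the last two terms to be completed on the first. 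The key check is that $\Phi$ identifies $\mathcal{R}_2$ with $[\textup{s-epi}(\mathcal{C})]$: given $\varphi_3=g_2 h$, one factors $(\varphi_2,\varphi_3)$ through a split-epi deflation built out of $(f_2,1_{X_3})\colon X_2\oplus X_3\to X_3$, while the converse direction is immediate by extracting a section from the split-epi factor.

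For part~(2), composing $\Phi$ with the equivalence $\mathfrak{s}\textup{-def}(\mathcal{C})/[\textup{s-epi}(\mathcal{C})]\simeq\textup{mod-}\mathcal{C}/[\mathcal{P}]$ from Theorem~\ref{thm1.1}(1) applied to $\mathcal{M}=\mathcal{C}$ yields the functor $F$. Tracing the functor of Theorem~\ref{thm1.1}, a deflation $f_2$ is sent to the cokernel of $\mathcal{C}(-,f_2)$, which is precisely $\delta^{\ast}$ by the defining exact sequence. Part~(3) is proved dually using Theorem~\ref{thm1.1}(2) together with the analogous identification of $\mathfrak{s}\textup{-tri}(\mathcal{C})/\mathcal{R}_2$ with $\mathfrak{s}\textup{-inf}(\mathcal{C})/[\textup{s-mono}(\mathcal{C})]$, and the image of $\delta$ is recognized as $\delta_{\ast}$.

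For part~(1), when $\mathcal{C}$ has enough projectives the target $\textup{mod-}\mathcal{C}/[\mathcal{P}]$ is abelian (being an ideal quotient of the abelian category of coherent functors), so (2) transports an abelian structure to $\mathfrak{s}\textup{-tri}(\mathcal{C})/\mathcal{R}_2$; dually when $\mathcal{C}$ has enough injectives via~(3). In the fully general extriangulated setting with neither assumption, I would verify the abelian axioms directly: for any morphism $\bar\varphi_\bullet$ in the quotient, construct candidate kernel and cokernel objects by pullbacks and pushouts of $\mathfrak{s}$-triangles afforded by (ET4) and its dual, then check that these constructions descend well to $\mathfrak{s}\textup{-tri}(\mathcal{C})/\mathcal{R}_2$ and that mono/epi coincide with kernel/cokernel, following the line of argument of \cite[Theorem~4.1]{[Lin]}.

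The principal obstacle is the exact matching of $\mathcal{R}_2$ with $[\textup{s-epi}(\mathcal{C})]$, which requires a judicious choice of intermediate split-epi deflation to factor through; a secondary difficulty is carrying out part~(1) in the full generality stated, as neither~(2) nor~(3) applies out of the box and one must instead rely on a hands-on verification using only the extriangulated axioms.
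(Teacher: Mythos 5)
Your proposal follows essentially the same route as the paper: the truncation functor identifying $\mathfrak{s}\textup{-tri}(\mathcal{C})/\mathcal{R}_2$ with $\mathfrak{s}\textup{-def}(\mathcal{C})/[\textup{s-epi}(\mathcal{C})]$ (and dually with $\mathfrak{s}\textup{-inf}(\mathcal{C})/[\textup{s-mono}(\mathcal{C})]$, using that ``$\varphi_3$ factors through $g_2$'' is equivalent to ``$\varphi_1$ factors through $f_1$'' by \cite[Corollary 3.5]{[NP]}), followed by Theorem \ref{3.1}, with the image of $\delta$ recognized as the defect $\delta^*$ (resp. $\delta_*$) exactly as in Theorem \ref{rem3.2}. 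For part (1) in the absence of projectives or injectives, the paper likewise constructs kernels, cokernels and the image explicitly via Lemma \ref{2.2} and its dual, adapting \cite{[Lin]}, which is the hands-on verification you outline.
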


We point out that the abelian quotient $\mathfrak{s}$-tri$(\mathcal{C})/\mathcal{R}_2$ admits nice properties. We describe the projectives, injectives and simple objects in $\mathfrak{s}$-tri$(\mathcal{C})/\mathcal{R}_2$; see Proposition \ref{prop3.2} and Proposition \ref{prop3.2.0}. In particular, if $\mathcal{C}$ has enough projectives $\mathcal{P}$ and enough injectives  $\mathcal{I}$, then there is a duality between mod-$\mathcal{C}/[\mathcal{P}]$ and mod-$(\mathcal{C}/[\mathcal{I}])^{\textup{op}}$, which is used to derive Auslander-Reiten duality and defect formula for extriangulated categories; see Proposition \ref{prop3.2.1}.

For describing  the second application, we first give some notations.
Let  $\mathcal{C}'$ and $\mathcal{C}''$ be two full subcategories of $\mathcal{C}$. We denote by Cocone$(\mathcal{C}',\mathcal{C}'')$ (resp. Cone$(\mathcal{C}',\mathcal{C}'')$) the subcategory of objects $X$ admitting an $\mathfrak{s}$-triangle $\xymatrixrowsep{0.1pc}\xymatrix{X\ar[r]&C'\ar[r] & C''\ar@{-->}[r]&}$ (resp. $\xymatrixrowsep{0.1pc}\xymatrix{C'\ar[r]&C''\ar[r] & X\ar@{-->}[r]&}$) with $C'\in\mathcal{C}'$ and $C''\in\mathcal{C}''$. 

Let $\mathcal{M}$ be a rigid subcategory of $\mathcal{C}$.  For convenience we let $\mathcal{M}_L=\textup{Cocone}(\mathcal{M},\mathcal{M})$ and $\mathcal{M}_R=\textup{Cone}(\mathcal{M},\mathcal{M})$. If $\mathcal{C}$ has enough projectives $\mathcal{P}$ and enough injectives $\mathcal{I}$, then we let $\Omega\mathcal{M}$=Cocone$(\mathcal{P},\mathcal{M})$ and $\Sigma\mathcal{M}$=Cone$(\mathcal{M},\mathcal{I})$. It turns out that the quotient categories  $\mathfrak{s}\textup{-def}(\mathcal{M})/[\textup{s-epi}(\mathcal{M})]$ and $\mathfrak{s}\textup{-def}(\mathcal{M})/[\textup{sp-epi}(\mathcal{M})]$ can be realized as subquotient categories of $\mathcal{C}$. 

Our second application of Theorem \ref{thm1.1} is the following (Theorem \ref{thm4.1}), which generalizes \cite[Theorem 3.2, Theorem 3.4]{[DL]} and \cite[Proposition 6.2]{[IY]}.

\begin{thm}
Let $\mathcal{C}$ be an extriangulated category and $\mathcal{M}$ be a rigid subcategory of $\mathcal{C}$.

\textup{(1)} If $\mathcal{C}$ has enough projectives $\mathcal{P}$ and $\mathcal{M}$ contains $\mathcal{P}$, then $\mathcal{M}_{L}/[\mathcal{M}]\cong\textup{mod-}(\mathcal{M}/[\mathcal{P}])$ and  $\mathcal{M}_{L}/[\Omega\mathcal{M}]\cong(\textup{mod-}(\mathcal{M}/[\mathcal{P}])^{\textup{op}})^{\textup{op}}$.

\textup{(2)} If $\mathcal{C}$ has enough injectives $\mathcal{I}$ and $\mathcal{M}$  contains $\mathcal{I}$, then $\mathcal{M}_{R}/[\mathcal{M}]\cong(\textup{mod-}(\mathcal{M}/[\mathcal{I}])^{\textup{op}})^{\textup{op}}$ and  $\mathcal{M}_{R}/[\Sigma\mathcal{M}]\cong\textup{mod-}(\mathcal{M}/[\mathcal{I}])$.
\end{thm}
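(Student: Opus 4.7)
The plan is to compare $\mathcal{M}_{L}/[\mathcal{M}]$ and $\mathcal{M}_L/[\Omega\mathcal{M}]$ with the morphism-category quotients $\mathfrak{s}\textup{-def}(\mathcal{M})/[\textup{s-epi}(\mathcal{M})]$ and $\mathfrak{s}\textup{-def}(\mathcal{M})/[\textup{sp-epi}(\mathcal{M})]$ supplied by Theorem~\ref{thm1.1}(1), thereby reducing part~(1) to that theorem; part~(2) will follow by dualising.

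First I will construct a functor
$$K\colon \mathfrak{s}\textup{-def}(\mathcal{M}) \longrightarrow \mathcal{M}_L, \qquad (f\colon M_1\to M_2)\longmapsto X,$$
where $X$ is chosen as the third term of an $\mathfrak{s}$-triangle $X\xrightarrow{a} M_1\xrightarrow{f} M_2\dashrightarrow$ with connecting class $\delta_f\in\mathbb{E}(M_2,X)$. Such $X$ lies in $\mathcal{M}_L$, and by the definition of $\mathcal{M}_L$ every object of $\mathcal{M}_L$ arises this way, so $K$ will be essentially surjective. For a morphism $(\varphi_1,\varphi_2)\colon f\to g$ in $\mathfrak{s}\textup{-def}(\mathcal{M})$, axiom (ET3) supplies a $\varphi_0\colon X\to Y$ making a morphism of $\mathfrak{s}$-triangles, and I set $K(\varphi_1,\varphi_2)=\varphi_0$. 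Any two choices $\varphi_0,\varphi_0'$ differ by a $u\colon X\to Y$ satisfying $u_{\ast}\delta_f=0$, and exactness of $\mathcal{C}(M_1,Y)\xrightarrow{a^{\ast}}\mathcal{C}(X,Y)\to\mathbb{E}(M_2,Y)$ forces $u=s\circ a$ for some $s\colon M_1\to Y$, i.e.\ $u$ factors through $M_1\in\mathcal{M}$. Hence $K$ descends to a well-defined functor $\bar K$ into $\mathcal{M}_L/[\mathcal{M}]$.

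Next I verify that $\bar K$ annihilates $[\textup{s-epi}(\mathcal{M})]$: any split $\mathfrak{s}$-deflation $f$ with both terms in $\mathcal{M}$ has $\delta_f=0$, its $\mathfrak{s}$-triangle splits, and $X$ becomes a direct summand of $M_1$; thus $1_X$ factors through $\mathcal{M}$ and $X\cong 0$ in $\mathcal{M}_L/[\mathcal{M}]$. The crux is then proving that the induced functor on $\mathfrak{s}\textup{-def}(\mathcal{M})/[\textup{s-epi}(\mathcal{M})]$ is fully faithful. For fullness I will promote a given $\varphi_0\colon X\to Y$ to compatible morphisms $\varphi_1$ and $\varphi_2$ by lifting against the inflation $Y\to N_1$, using the rigidity vanishing $\mathbb{E}(M_i,N_j)=0$ to annihilate the successive lifting obstructions. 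For faithfulness, if $\varphi_0$ factors as $X\xrightarrow{t}M\xrightarrow{s}Y$ with $M\in\mathcal{M}$, I will exhibit $(\varphi_1,\varphi_2)$ as a composite in $\mathfrak{s}\textup{-def}(\mathcal{M})$ passing through the split deflation $M\oplus N_1\twoheadrightarrow N_1$ (post-composed with $g$), once more invoking rigidity to supply the required intermediate maps.

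For the second equivalence in (1), the cocone of a representative $(M\xrightarrow{1}M)\oplus(P\to M')$ of $\textup{sp-epi}(\mathcal{M})$ is exactly the cocone of $P\to M'$, which by definition lies in $\Omega\mathcal{M}$; conversely every object of $\Omega\mathcal{M}$ arises so. The same functor $\bar K$ therefore induces $\mathfrak{s}\textup{-def}(\mathcal{M})/[\textup{sp-epi}(\mathcal{M})]\cong\mathcal{M}_L/[\Omega\mathcal{M}]$, and combining with Theorem~\ref{thm1.1}(1) completes part~(1). Part~(2) is obtained by running the identical argument in the dual setting, interchanging $\mathfrak{s}\textup{-def}$ with $\mathfrak{s}\textup{-inf}$, cocones with cones, and projectives with injectives. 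The main obstacle throughout is the full-faithfulness of $\bar K$, where careful manipulation of $\mathfrak{s}$-triangles and repeated use of the rigidity hypothesis $\mathbb{E}(\mathcal{M},\mathcal{M})=0$ are indispensable for killing the relevant $\mathbb{E}$-obstructions.
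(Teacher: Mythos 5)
Your treatment of the first equivalence $\mathcal{M}_{L}/[\mathcal{M}]\cong\textup{mod-}(\mathcal{M}/[\mathcal{P}])$ is essentially the paper's own route: the cocone functor $K$, well-definedness modulo $[\mathcal{M}]$ via the exact sequence $\mathcal{C}(M_1,Y)\to\mathcal{C}(X,Y)\to\mathbb{E}(M_2,Y)$, density, fullness from the fact that rigidity makes $X\to M_1$ a left $\mathcal{M}$-approximation, and the identification of the two ideals. That part is sound.

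The genuine gap is in the second equivalence, which you dispatch with a purely object-level remark (``the cocone of a representative of $\textup{sp-epi}(\mathcal{M})$ lies in $\Omega\mathcal{M}$, and conversely''). First, ``the same functor $\bar K$'' does not obviously exist with target $\mathcal{M}_L/[\Omega\mathcal{M}]$: the ambiguity in the choice of $\varphi_0$ is a morphism factoring through $M_1\in\mathcal{M}$, and $[\mathcal{M}]\not\subseteq[\Omega\mathcal{M}]$ in general (in the triangulated case $\Omega\mathcal{M}=\mathcal{M}[-1]$, and these ideals genuinely differ --- indeed the two quotients in part (1) are not equivalent in general). Second, and more importantly, the whole content of this equivalence is a morphism-level statement: for a morphism of $\mathfrak{s}$-triangles $(g,a,b)$ one must prove that $(a,b)$ factors through $\textup{sp-epi}(\mathcal{M})$ if and only if $g$ factors through an object of $\Omega\mathcal{M}$. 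The direction ``$\underline{a}$ factors through $\underline{f}$ $\Rightarrow$ $g$ factors through $\Omega\mathcal{M}$'' is the technical heart of the paper's Lemma \ref{lem2.2}(2): one applies (ET4)$^{\textup{op}}$ to $\Omega M_1'\to P\to M_1'$ and $X'\to M_1'\to M_2'$ to produce an object $Y\in\Omega\mathcal{M}$, and then uses the weak-pullback property of the resulting square to factor $g$ through $Y$. None of this appears in your proposal, and without it neither the existence of the comparison functor (which the paper therefore builds in the opposite direction, $\mathcal{M}_L\to\mathfrak{s}\textup{-def}(\mathcal{M})/[\textup{sp-epi}(\mathcal{M})]$) nor its faithfulness is established. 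The same issue recurs in the dual half of part (2).
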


In particular, if $\mathcal{M}$ is a cluster tilting subcategory of $\mathcal{C}$, then $\mathcal{M}_L=\mathcal{M}_R=\mathcal{C}$. Thus we have the following result (Corollary \ref{cor5.1}).

\begin{cor}
Let $\mathcal{C}$ be an extriangulated category with enough projectives $\mathcal{P}$ and enough injectives $\mathcal{I}$. If $\mathcal{M}$ is a cluster tilting subcategory of $\mathcal{C}$, then

\textup{(1)} $\mathcal{C}/[\mathcal{M}]\cong\textup{mod-}(\mathcal{M}/[\mathcal{P}])\cong(\textup{mod-}(\mathcal{M}/[\mathcal{I}])^{\textup{op}})^{\textup{op}}$.

\textup{(2)} $\mathcal{C}/[\Omega\mathcal{M}]\cong(\textup{mod-}(\mathcal{M}/[\mathcal{P}])^{\textup{op}})^{\textup{op}}$.

\textup{(3)} $\mathcal{C}/[\Sigma\mathcal{M}]\cong\textup{mod-}(\mathcal{M}/[\mathcal{I}])$.
\end{cor}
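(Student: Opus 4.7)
The plan is to deduce this corollary directly from the preceding theorem on rigid subcategories, by checking two things: that its hypotheses are met, and that cluster tilting forces $\mathcal{M}_L=\mathcal{M}_R=\mathcal{C}$, so that the subquotient categories appearing there collapse to quotients of $\mathcal{C}$ itself.

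First I would record the easy preliminaries. A cluster tilting subcategory $\mathcal{M}$ is rigid by definition, and it contains $\mathcal{P}$ and $\mathcal{I}$: for any $P\in\mathcal{P}$ one has $\mathbb{E}(P,\mathcal{M})=0$, and the cluster tilting characterization ``$X\in\mathcal{M}$ iff $\mathbb{E}(X,\mathcal{M})=0$ iff $\mathbb{E}(\mathcal{M},X)=0$'' forces $P\in\mathcal{M}$; the argument for $\mathcal{I}\subseteq\mathcal{M}$ is dual.

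The heart of the argument is to show $\mathcal{M}_R=\mathcal{C}$, the equality $\mathcal{M}_L=\mathcal{C}$ being dual. Fix $X\in\mathcal{C}$. Since $\mathcal{P}\subseteq\mathcal{M}$ and $\mathcal{C}$ has enough projectives, a right $\mathcal{M}$-approximation of $X$ can be chosen to be an $\mathfrak{s}$-deflation (add to any right $\mathcal{M}$-approximation a deflation from a projective), producing an $\mathfrak{s}$-triangle $K\to M_0\to X$. Applying $\mathcal{C}(M,-)$ for $M\in\mathcal{M}$ gives surjectivity of $\mathcal{C}(M,M_0)\to\mathcal{C}(M,X)$ by the approximation property; combined with $\mathbb{E}(\mathcal{M},M_0)=0$ (rigidity) and the long exact sequence attached to the $\mathfrak{s}$-triangle, this forces $\mathbb{E}(\mathcal{M},K)=0$. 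By the cluster tilting characterization, $K\in\mathcal{M}$, so $X\in\mathcal{M}_R$.

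With $\mathcal{M}_L=\mathcal{M}_R=\mathcal{C}$ established, the three assertions are immediate translations of the two parts of the preceding theorem: its first part gives both $\mathcal{C}/[\mathcal{M}]\cong\textup{mod-}(\mathcal{M}/[\mathcal{P}])$ and $\mathcal{C}/[\Omega\mathcal{M}]\cong(\textup{mod-}(\mathcal{M}/[\mathcal{P}])^{\textup{op}})^{\textup{op}}$, while its second part yields $\mathcal{C}/[\mathcal{M}]\cong(\textup{mod-}(\mathcal{M}/[\mathcal{I}])^{\textup{op}})^{\textup{op}}$ together with $\mathcal{C}/[\Sigma\mathcal{M}]\cong\textup{mod-}(\mathcal{M}/[\mathcal{I}])$. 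The main obstacle is the $\mathcal{M}_L=\mathcal{M}_R=\mathcal{C}$ step, specifically arranging the right $\mathcal{M}$-approximation to be an $\mathfrak{s}$-deflation and showing its kernel lies in $\mathcal{M}$; once that is done, the corollary follows by invoking the preceding theorem twice.
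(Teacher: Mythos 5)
Your proposal is correct and follows essentially the same route as the paper: establish $\mathcal{M}_L=\mathcal{M}_R=\mathcal{C}$ and then invoke Theorem \ref{thm4.1} twice. The paper obtains this equality as the $n=2$ case of its proposition $\mathcal{M}_L={}^{\bot_{n-2}}\mathcal{M}$, $\mathcal{M}_R=\mathcal{M}^{\bot_{n-2}}$ (where the orthogonality conditions become vacuous), and the proof of that proposition is the mirror image of your direct argument — a left $\mathcal{M}$-approximation combined with an injective coresolution and the long exact sequence, versus your right approximation made into an $\mathfrak{s}$-deflation via a projective — so the substance is the same.
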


This  paper is organized as follows. In Section 2 we make some preliminaries on morphism categories and extriangulated categories. In Section 3 we prove  Theorem 1.1. In Section 4 we provide the first application. In Section 5 we provide the second application.

\section{definitions and preliminaries}

In this section, we first give some facts on morphism categories, then recall the definitions and basic properties on extriangulated categories from \cite{[NP]}, \cite{[LN]} and \cite{[INP]}.

\subsection{Morphism categories}

Let $\mathcal{C}$ be an additive category. The {\em morphism category} of $\mathcal{C}$ is the category $\textup{Mor}(\mathcal{C})$ defined by the following data.
The objects of $\textup{Mor}(\mathcal{C})$ are all  the morphisms $f:X\rightarrow Y$ in $\mathcal{C}$. The morphisms from $f:X\rightarrow Y$ to $f':X'\rightarrow Y'$ are pairs $(a,b)$ where $a:X\rightarrow X'$ and $b:Y\rightarrow Y'$ such that $bf=f'a$. The composition of morphisms is componentwise. For two objects $f:X\rightarrow Y$ and $f':X'\rightarrow Y'$ in $\textup{Mor}(\mathcal{C})$, we define $\mathcal{R}(f,f')$ (resp. $\mathcal{R'}(f,f'))$ to be the set of morphisms $(a,b)$ such that there is some morphism $p:Y\rightarrow X'$ such that $f'p=b$ (resp. $pf=a$). Then $\mathcal{R}$ and $\mathcal{R}'$ are ideals of $\textup{Mor}(\mathcal{C})$. We denote by s-epi$(\mathcal{C})$ (resp. s-mono$(\mathcal{C})$) the full subcategory of $\textup{Mor}(\mathcal{C})$ consisting of split epimorphisms (resp. split monomorphisms).

Recall that a right {\em $\mathcal{C}$-module} is a contravariantly additive functor $F:\mathcal{C}\rightarrow Ab$, where $Ab$ is the category of abelian groups. A $\mathcal{C}$-module $F$ is called {\em{finitely presented}} if there exists an exact sequence $\mathcal{C}(-,X)\rightarrow\mathcal{C}(-,Y)\rightarrow F\rightarrow 0$. We denote by mod-$\mathcal{C}$ the category of finitely presented $\mathcal{C}$-modules, and by $\textup{proj-}\mathcal{C}$ (resp. $\textup{inj-}\mathcal{C}$) the full subcategory of mod-$\mathcal{C}$ consisting of projectives (resp. injectives).

The following result was proved in \cite[Lemma 3.1]{[Lin]} and \cite[Proposition 3.3]{[Lin]}.
\begin{lem}\label{lem2.1}
Let $\mathcal{C}$ be an additive category, then

\textup{(1)} $\textup{Mor}(\mathcal{C})/\mathcal{R}\cong\textup{Mor}(\mathcal{C})/[\textup{s-epi}(\mathcal{C})]\cong \textup{mod-}\mathcal{C}$.

\textup{(2)} $\textup{Mor}(\mathcal{C})/\mathcal{R'}\cong\textup{Mor}(\mathcal{C})/[\textup{s-mono}(\mathcal{C})]\cong (\textup{mod-}\mathcal{C}^{\textup{op}})^{\textup{op}}$.
\end{lem}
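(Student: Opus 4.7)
The plan is to prove (1) by constructing an explicit functor $F\colon\textup{Mor}(\mathcal{C})\to\textup{mod-}\mathcal{C}$ that sends $(f\colon X\to Y)$ to the cokernel of $\mathcal{C}(-,f)\colon\mathcal{C}(-,X)\to\mathcal{C}(-,Y)$, which is manifestly a finitely presented $\mathcal{C}$-module, and then show this functor induces the two desired equivalences.  Part (2) will then follow by applying (1) to $\mathcal{C}^{\textup{op}}$, since $\textup{Mor}(\mathcal{C}^{\textup{op}})\cong\textup{Mor}(\mathcal{C})^{\textup{op}}$ swaps the roles of $\mathcal{R}$ and $\mathcal{R}'$ and of split epis and split monos.

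For part (1), I would verify that $F$ is well defined on morphisms (a morphism $(a,b)$ in $\textup{Mor}(\mathcal{C})$ induces a chain map between the two projective presentations, hence a map between cokernels) and note that it is essentially surjective by the very definition of $\textup{mod-}\mathcal{C}$.  Fullness is a routine diagram chase: lift $\phi\colon F(f)\to F(f')$ to $\mathcal{C}(-,Y)\to\mathcal{C}(-,Y')$ using projectivity of the representable, lift further to $\mathcal{C}(-,X)\to\mathcal{C}(-,X')$, and apply the Yoneda lemma to obtain the pair $(a,b)$.  The crucial step is the computation of the kernel: $F(a,b)=0$ means $b_*\colon\mathcal{C}(-,Y)\to\mathcal{C}(-,Y')$ composed to $F(f')$ is zero, which by exactness says $b_*$ factors through $\mathcal{C}(-,f')$, and by Yoneda (plus projectivity applied at $Y$) this is equivalent to the existence of $p\colon Y\to X'$ with $f'p=b$, i.e.\ to $(a,b)\in\mathcal{R}(f,f')$.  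Hence $F$ induces the equivalence $\textup{Mor}(\mathcal{C})/\mathcal{R}\cong\textup{mod-}\mathcal{C}$.

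It remains to identify $\mathcal{R}$ with the ideal $[\textup{s-epi}(\mathcal{C})]$.  The inclusion $[\textup{s-epi}(\mathcal{C})]\subseteq\mathcal{R}$ is straightforward: if $(a,b)$ factors through a split epi object $g\colon U\to V$ with section $s$, unwind the definition of composition in $\textup{Mor}(\mathcal{C})$ to write $b=f'\cdot(usd)$ for appropriate $u,d$, exhibiting the required $p$.  The reverse direction is the main obstacle.  Given $(a,b)\colon f\to f'$ with $b=f'p$, I expect to factor it through the split epi $\textup{pr}_Y\colon Y\oplus X\to Y$ as follows: take the morphism $f\to\textup{pr}_Y$ given by $(\binom{f}{1_X},\,1_Y)$ and the morphism $\textup{pr}_Y\to f'$ given by $((p,\,a-pf),\,b)$; the relation $bf=f'a$ forces $f'(a-pf)=0$, so the second pair really is a morphism in $\textup{Mor}(\mathcal{C})$, and a direct calculation shows the composite is $(a,b)$.

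Assuming this, the second equivalence in (1) is immediate because $F$ annihilates split epi objects (a split epi $f$ has $\mathcal{C}(-,f)$ surjective, so $F(f)=0$), so $F$ factors through $\textup{Mor}(\mathcal{C})/[\textup{s-epi}(\mathcal{C})]$, and the composition $\textup{Mor}(\mathcal{C})/[\textup{s-epi}(\mathcal{C})]\twoheadrightarrow\textup{Mor}(\mathcal{C})/\mathcal{R}\xrightarrow{\sim}\textup{mod-}\mathcal{C}$ has zero kernel by the identification of ideals just established.  For part (2), the dual functor $G(f)=\textup{coker}(\mathcal{C}(f,-))\in\textup{mod-}\mathcal{C}^{\textup{op}}$ plays the analogous role, and the entire argument transports under the identification with $\mathcal{C}^{\textup{op}}$; no new ideas are needed.
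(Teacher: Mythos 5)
Your argument is correct and complete: the functor $F(f)=\operatorname{coker}\mathcal{C}(-,f)$ is dense and full by projectivity of representables plus Yoneda, its kernel is exactly $\mathcal{R}$ by evaluating at $Y$, and your explicit factorization of any $(a,b)\in\mathcal{R}(f,f')$ through $\mathrm{pr}_Y\colon Y\oplus X\to Y$ via $(\left(\begin{smallmatrix}f\\1_X\end{smallmatrix}\right),1_Y)$ followed by $((p,\,a-pf),\,b)$ correctly establishes $\mathcal{R}=[\textup{s-epi}(\mathcal{C})]$, with (2) following by duality. The paper gives no proof of this lemma, citing it from \cite{[Lin]}; your route is the standard projectivization argument used there, so there is nothing to flag.
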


\subsection{Extriangulated categories}
Let $\mathcal{C}$ be an additive category equipped with an additive bifunctor
$\mathbb{E}: \mathcal{C}^{\textup{op}}\times\mathcal{C}\rightarrow Ab$. For any pair of objects $A,C\in\mathcal{C}$, an object $\delta\in\mathbb{E}(C,A)$ is called an $\mathbb{E}$-$extension$. For any morphism $a\in\mathcal{C}(A,A')$ and $c\in\mathcal{C}(C',C)$, we denote the $\mathbb{E}$-extension $\mathbb{E}(C,a)(\delta)\in\mathbb{E}(C,A')$ by $a_*\delta$ and denote the $\mathbb{E}$-extension $\mathbb{E}(c,A)(\delta)\in\mathbb{E}(C',A)$  by $c^*\delta$. Let $\delta\in \mathbb{E}(C,A)$ and $\delta'\in \mathbb{E}(C',A')$ be two $\mathbb{E}$-extensions. A $morphism$ $(a,c): \delta\rightarrow\delta'$ of $\mathbb{E}$-extensions is a pair of morphisms $a\in\mathcal{C}(A,A')$ and $c\in\mathcal{C}(C,C')$ such that $a_*\delta=c^*\delta'$.

Let $A,C\in \mathcal{C}$ be any pair of objects.  Two sequences of morphisms in $\mathcal{C}$
$$A\xrightarrow{x} B\xrightarrow{y} C\ \ \textup{and} \ \  A\xrightarrow{x'} B'\xrightarrow{y'} C$$
are $equivalent$ if there exists an isomorphism $b\in\mathcal{C}(B,B')$ such that the following diagram is commutative.
$$\xymatrix{
A\ar[r]^{x} \ar@{=}[d]& B\ar[r]^{y}\ar[d]^{b}_{\simeq} & C\ar@{=}[d]\\
A\ar[r]^{x'} & B'\ar[r]^{y'} & C\\
}$$
We denote the equivalence class of $A\xrightarrow{x} B\xrightarrow{y} C$ by $[A\xrightarrow{x} B\xrightarrow{y} C]$.

\begin{defn} Let $\mathbb{E}: \mathcal{C}^{\textup{op}}\times\mathcal{C}\rightarrow Ab$ be an additive bifunctor.
A correspondence $\mathfrak{s}$ is called a $realization$ of $\mathbb{E}$ if it associates an equivalence class $\mathfrak{s}(\delta)=[A\xrightarrow{x} B\xrightarrow{y} C]$ to any $\mathbb{E}$-extension $\delta\in \mathbb{E}(C,A)$ and associates a commutative diagram $$\xymatrix{
A\ar[r]^{x} \ar[d]^{a}& B\ar[r]^{y}\ar[d]^{b} & C\ar[d]^{c}\\
A\ar[r]^{x'} & B'\ar[r]^{y'} & C\\
}$$ to any morphism $(a,c):\delta\rightarrow\delta'$ of $\mathbb{E}$-extensions, where $\mathfrak{s}(\delta)=[A\xrightarrow{x} B\xrightarrow{y} C]$ and $\mathfrak{s}(\delta')=[A'\xrightarrow{x'} B'\xrightarrow{y'} C']$. In the above situation, we say the sequence $A\xrightarrow{x} B\xrightarrow{y} C$ $realizes$ $\delta$ and the triple $(a,b,c)$ $realizes$ $(a,c)$.
\end{defn}

\begin{defn}
A realization $\mathfrak{s}$ of $\mathbb{E}$ is said to be $additive$ if it satisfies the following two conditions.

(1) Assume that $0 \in \mathbb{E}(C,A)$ is the zero element, 
then $\mathfrak{s}(0)=[A\xrightarrow{\left(
                                                                   \begin{smallmatrix}
                                                                     1 \\
                                                                     0 \\
                                                                   \end{smallmatrix}
                                                                 \right)
} A\oplus C\xrightarrow{(0,1)} C]$.

(2) Assume that $\mathfrak{s}(\delta)=[A\xrightarrow{x} B\xrightarrow{y} C]$ and $\mathfrak{s}(\delta')=[A'\xrightarrow{x'} B'\xrightarrow{y'} C']$, then $\mathfrak{s}(\delta\oplus\delta')=[A\oplus A'\xrightarrow{x\oplus x'} B\oplus B'\xrightarrow{y\oplus y'} C\oplus C']$, where $\delta\oplus\delta'\in \mathbb{E}(C\oplus C',A\oplus A')$ is the element corresponding to $(\delta,0,0,\delta')$ under the isomorphism $\mathbb{E}(C\oplus C',A\oplus A')\cong\mathbb{E}(C,A)\oplus\mathbb{E}(C,A')\oplus\mathbb{E}(C',A)\oplus\mathbb{E}(C',A')$.
\end{defn}

Let $\mathfrak{s}$ be an additive realization of $\mathbb{E}$. If $\mathfrak{s}(\delta)=[A\xrightarrow{x} B\xrightarrow{y} C]$, then the sequence $A\xrightarrow{x} B\xrightarrow{y} C$ is called an $\mathfrak{s}$-$conflation$, the morphism $x$ is called an $\mathfrak{s}$-$inflation$ and $y$ is called an $\mathfrak{s}$-$deflation$. In this case, we say
$\xymatrixrowsep{0.05pc}\xymatrix{A\ar[r]^{x}&B\ar[r]^{y} & C\ar@{-->}[r]^{\delta}&}$ is an $\mathfrak{s}$-$triangle$. Let $\xymatrixrowsep{0.05pc}\xymatrix{A\ar[r]^{x}&B\ar[r]^{y} & C\ar@{-->}[r]^{\delta}&}$ and $\xymatrixrowsep{0.05pc}\xymatrix{A'\ar[r]^{x'}&B'\ar[r]^{y'} & C'\ar@{-->}[r]^{\delta'}&}$ be any pair of $\mathfrak{s}$-triangles. Let $(a,c):\delta\rightarrow\delta'$ be a morphism of $\mathbb{E}$-extensions. If a triple $(a,b,c)$ realizes $(a,c)$, then we say $(a,b,c)$ is a $morphism$ of $\mathfrak{s}$-triangles.

\begin{defn} (\cite[Definition 2.12]{[NP]})
A triple $(\mathcal{C}, \mathbb{E}, \mathfrak{s})$ is an {\em extriangulated category} if the following conditions are satisfied.

(ET1)  $\mathbb{E}: \mathcal{C}^{\textup{op}}\times\mathcal{C}\rightarrow Ab$ is an additive bifunctor.

(ET2) $\mathfrak{s}$ is an additive realization of $\mathbb{E}$.

(ET3) Each commutative diagram
$$\xymatrix{
 A\ar[r]^{x}\ar[d]^{a} & B \ar[r]^{y}\ar[d]^{b} &C\ar@{-->}[r]^{\delta} & \\
A'\ar[r]^{x'} & B' \ar[r]^{y'} &  C' \ar@{-->}[r]^{\delta'}&
}$$
whose rows are $\mathfrak{s}$-triangles can be completed to a morphism of  $\mathfrak{s}$-triangles.

(ET3)$^{\textup{op}}$ Each commutative diagram
$$\xymatrix{
 A\ar[r]^{x} & B \ar[r]^{y}\ar[d]^{b} &C\ar[d]^{c}\ar@{-->}[r]^{\delta} & \\
A'\ar[r]^{x'} & B' \ar[r]^{y'} &  C' \ar@{-->}[r]^{\delta'}&
}$$
whose rows are $\mathfrak{s}$-triangles can be completed to a morphism of  $\mathfrak{s}$-triangles.

(ET4) Let $\xymatrixrowsep{0.05pc}\xymatrix{A\ar[r]^{f}&B\ar[r]^{f'} & D\ar@{-->}[r]^{\delta}&}$  and $\xymatrixrowsep{0.05pc}\xymatrix{B\ar[r]^{g}&C\ar[r]^{g'} & F\ar@{-->}[r]^{\delta'}&}$  be $\mathfrak{s}$-triangles.
There exists a commutative diagram
$$\xymatrix{
A\ar[r]^{f}\ar@{=}[d] & B \ar[r]^{f'}\ar[d]^{g} & D\ar@{-->}[r]^{\delta}\ar[d]^{d} & \\
A\ar[r]^{h} & C \ar[d]^{g'}\ar[r]^{h'} &  E \ar[d]^{e}\ar@{-->}[r]^{\delta''} & \\
& F\ar@{=}[r]\ar@{-->}[d]^{\delta'} & F \ar@{-->}[d]^{f'_*\delta'}&\\
&&&\\
}$$
such that the second row and the third column are $\mathfrak{s}$-triangles, moreover, $\delta=d^*\delta''$ and $f_*\delta''=e^*\delta'$.

(ET4)$^{\textup{op}}$ Let $\xymatrixrowsep{0.05pc}\xymatrix{D\ar[r]^{f'}&A\ar[r]^{f} & B\ar@{-->}[r]^{\delta}&}$  and $\xymatrixrowsep{0.05pc}\xymatrix{F\ar[r]^{g'}&B\ar[r]^{g} & C\ar@{-->}[r]^{\delta'}&}$  be $\mathfrak{s}$-triangles.
There exists a commutative diagram
$$\xymatrix{
D\ar[r]^{d}\ar@{=}[d] & E \ar[r]^{e}\ar[d]^{h'} & F\ar@{-->}[r]^{g'^*\delta}\ar[d]^{g'} & \\
D\ar[r]^{f'} & A \ar[d]^{h}\ar[r]^{f} &  B \ar[d]^{g}\ar@{-->}[r]^{\delta} & \\
& C\ar@{=}[r]\ar@{-->}[d]^{\delta''} & C \ar@{-->}[d]^{\delta'}&\\
&&&\\
}$$
such that the first row and the second column are $\mathfrak{s}$-triangles, moreover, $\delta'=e_*\delta''$ and $d_*\delta=g^*\delta''$.
\end{defn}

\begin{defn}
Let $(\mathcal{C}, \mathbb{E}, \mathfrak{s})$ be an extriangulated category.

(1) An object $P\in\mathcal{C}$ is called {\em projective} if for any $\mathfrak{s}$-deflation $y:B\rightarrow C$
  and any morphism $c:P\rightarrow C$, there exists a morphism $b:P\rightarrow B$ such that $yb=c$. The full subcategory of projectives is denoted  by $\mathcal{P}$.

(2) We say that $\mathcal{C}$ {\em has enough projectives} if for any object $C\in\mathcal{C}$ there exists an $\mathfrak{s}$-triangle $\xymatrixrowsep{0.05pc}\xymatrix{A\ar[r]^{x}&P\ar[r]^{y} & C\ar@{-->}[r]^{\delta}&}$ with $P\in\mathcal{P}$.
\end{defn}

\begin{example}\cite[Example 3.26]{[NP]}
(1) Let $\mathcal{C}$ be an exact category, then $\mathcal{C}$ is an extriangulated category with $\mathbb{E}(-,-)=\textup{Ext}^1_\mathcal{C}(-,-)$. In particular, if $\mathcal{C}$ is an exact category with enough projectives, then $\mathcal{C}$ is an extriangulated category with enough projectives.

(2) Let $\mathcal{C}$ be a triangulated category with shift functor $[1]$, then $\mathcal{C}$ is an extriangulated category with $\mathbb{E}(-,-)=\mathcal{C}(-,-[1])$. Moreover, $\mathcal{C}$ has enough projectives. In this case, $\mathcal{P}$ consists of zero objects.
\end{example}

The following lemmas will be used frequently later.

\begin{lem}(\cite[Corollary 3.5]{[NP]})\label{2.1}
Let $\mathcal{C}$ be an extriangulated category. Assume that the diagram $$\xymatrix{
 A\ar[r]^{x}\ar[d]^{a} & B \ar[r]^{y}\ar[d]^{b} &C\ar[d]^{c}\ar@{-->}[r]^{\delta} & \\
A'\ar[r]^{x'} & B' \ar[r]^{y'} &  C' \ar@{-->}[r]^{\delta'}&
}$$ is  a morphism of $\mathfrak{s}$-triangles. Then the following statements are equivalent.

\textup{(1)} $a$ factors through $x$.

\textup{(2)} $a_*\delta=c^*\delta'=0$.

\textup{(3)} $c$ factors through $y'$.
\end{lem}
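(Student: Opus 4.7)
The plan is to prove the equivalence (1)$\Leftrightarrow$(2) directly and then deduce (2)$\Leftrightarrow$(3) by the same argument with the roles of the two variables of $\mathbb{E}$ interchanged, using (ET3)$^{\textup{op}}$ in place of (ET3). The two vanishings in condition (2) are automatically equivalent, because $(a,c)\colon\delta\to\delta'$ being a morphism of $\mathbb{E}$-extensions means exactly $a_{*}\delta = c^{*}\delta'$; so in each direction it suffices to handle one of them.

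The essential ingredient is the identity $x_{*}\delta = 0$ for any $\mathfrak{s}$-triangle $A\xrightarrow{x}B\xrightarrow{y}C\dashrightarrow\delta$, a foundational consequence of the axioms that I would invoke as a preliminary lemma (it is established earlier in \cite{[NP]}). Granting this, (1)$\Rightarrow$(2) is immediate: if $a=rx$ for some $r\colon B\to A'$, then bifunctoriality of $\mathbb{E}$ gives $a_{*}\delta = r_{*}(x_{*}\delta) = 0$, and consequently $c^{*}\delta' = a_{*}\delta = 0$.

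For the substantive direction (2)$\Rightarrow$(1), let $A'\xrightarrow{u}D\xrightarrow{v}C\dashrightarrow a_{*}\delta$ be any realization of $a_{*}\delta$. Since $a_{*}\delta=0$, additivity of $\mathfrak{s}$ in axiom (ET2) identifies this sequence, up to equivalence, with the split sequence $A'\xrightarrow{\binom{1}{0}}A'\oplus C\xrightarrow{(0,1)}C$, so $u$ admits a retraction $r\colon D\to A'$. Axiom (ET3) applied to the $\mathbb{E}$-extension morphism $(a,1_{C})\colon\delta\to a_{*}\delta$ then produces $b'\colon B\to D$ with $b'x = ua$, and composing with $r$ gives $rb'\cdot x = rua = a$, so $rb'\colon B\to A'$ provides the required factorization. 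The equivalence (2)$\Leftrightarrow$(3) follows by the dual argument: replace pushforwards by pullbacks, (ET3) by (ET3)$^{\textup{op}}$, and the vanishing $x_{*}\delta=0$ by the analogous $y'^{*}\delta'=0$.

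The main obstacle is the black-boxed vanishing statement $x_{*}\delta=0$: while its content is natural (``consecutive maps in an $\mathfrak{s}$-triangle compose to zero at the level of extensions''), its derivation from the bare axioms is not cosmetic and relies on the combined use of (ET3) and additivity of the realization. Once this is available, everything else is a short diagram chase: realize a vanishing extension by a split sequence, invoke (ET3) to transport structural morphisms, and compose with the splitting retraction to extract the desired factorization.
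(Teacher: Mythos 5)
Your proposal is correct, but note that the paper itself gives no proof of this lemma: it is quoted verbatim from Nakaoka--Palu as \cite[Corollary 3.5]{[NP]}, so there is nothing internal to compare against. Your argument is essentially the standard one from that source (their Corollary 3.5 is deduced from the exactness of $\mathcal{C}(B,A')\to\mathcal{C}(A,A')\to\mathbb{E}(C,A')$, which is proved exactly by your ``realize the zero extension as a split sequence and compose with the retraction'' device), and the black-boxed vanishing $x_*\delta=0$ (dually $y^*\delta=0$) is indeed a genuine consequence of (ET3)/(ET3)$^{\textup{op}}$ together with additivity of $\mathfrak{s}$, as you anticipate. One small misattribution: the morphism $b'\colon B\to D$ with $b'x=ua$ is supplied not by (ET3) but by the definition of a realization itself (part of (ET2)), since $(a,1_C)\colon\delta\to a_*\delta$ is already a morphism of $\mathbb{E}$-extensions; (ET3) is only needed when one starts from a commutative square without knowing the compatibility $a_*\delta=c^*\delta'$. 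This does not affect the validity of the proof.
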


\begin{lem} (\cite[Proposition 1.20]{[LN]})\label{2.2}
Let $\mathcal{C}$ be an extriangulated category.  Assume that $\xymatrixrowsep{0.05pc}\xymatrix{A\ar[r]^{x}&B\ar[r]^{y} & C\ar@{-->}[r]^{\delta}&}$ is an $\mathfrak{s}$-triangle, $f: A\rightarrow D$ is a morphism and $\xymatrixrowsep{0.05pc}\xymatrix{D\ar[r]^{d}&E\ar[r]^{e} & C\ar@{-->}[r]^{f_*\delta}&}$  is an $\mathfrak{s}$-triangle, then there is a morphism $g:B\rightarrow E$ which gives a morphism of  $\mathfrak{s}$-triangles
$$\xymatrix{
 A\ar[r]^{x}\ar[d]^{f} & B \ar[r]^{y}\ar@{-->}[d]^{g} &C\ar@{-->}[r]^{\delta}\ar@{=}[d] & \\
D\ar[r]^{d} & E \ar[r]^{e} &  C \ar@{-->}[r]^{f_*\delta}&
}$$ and moreover, $\xymatrix{A\ar[r]^{\left(
                                                           \begin{smallmatrix}
                                                             f \\
                                                             x \\
                                                           \end{smallmatrix}
                                                         \right)
} & D\oplus B\ar[r]^{(d, -g)} & E\ar@{-->}[r]^{e^*\delta}& }$ is an $\mathfrak{s}$-triangle.
\end{lem}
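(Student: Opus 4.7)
The plan is to handle the two assertions separately. For the first, existence of $g$ making $(f,g,1_C)$ a morphism of $\mathfrak{s}$-triangles, I would argue directly from the definition of a realization. Because $\mathbb{E}$ is a bifunctor, $\mathbb{E}(C,f)(\delta)=f_{*}\delta=\mathbb{E}(1_C,D)(f_{*}\delta)$, so the pair $(f,1_C)$ is a morphism of $\mathbb{E}$-extensions $\delta\to f_{*}\delta$. By definition an additive realization assigns to any such morphism of extensions a commutative diagram between chosen representative sequences; the middle entry of that diagram yields $g:B\to E$ and the whole diagram is by definition a morphism of $\mathfrak{s}$-triangles.

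For the second assertion, I would first record the compositional consistency: $(d,-g)\bigl(\begin{smallmatrix}f\\x\end{smallmatrix}\bigr)=df-gx=0$, using $gx=df$ from the commutativity of the left square produced in Step 1. The substantive content is then to identify the sequence $A\xrightarrow{\binom{f}{x}}D\oplus B\xrightarrow{(d,-g)}E$ as a realization of the pulled-back extension $e^{*}\delta\in\mathbb{E}(E,A)$. My preferred route is the following: produce the $\mathfrak{s}$-triangle realizing $e^{*}\delta$ by pulling back $\delta$ along $e$ (a pullback of $\mathfrak{s}$-triangles, which one obtains from (ET4) or its op-version applied to the splicing of $A\xrightarrow{x}B\xrightarrow{y}C$ with $D\xrightarrow{d}E\xrightarrow{e}C$), yielding an $\mathfrak{s}$-triangle $A\xrightarrow{x'}P\xrightarrow{y'}E\dashrightarrow^{e^{*}\delta}$ together with a comparison morphism $(1_A,h,e)$ to the original $\mathfrak{s}$-triangle on the bottom. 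Then use the universal-type property encoded by this comparison, together with the maps $f:A\to D$ and $g:B\to E$ from Step 1, to construct inverse morphisms $D\oplus B\rightleftarrows P$ matching the prescribed maps $\binom{f}{x}$ and $(d,-g)$ on the nose.

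Alternatively, one can bypass the pullback construction by applying (ET4) directly to the trivial $\mathfrak{s}$-triangle $A\xrightarrow{\binom{1}{0}}A\oplus D\xrightarrow{(0,1)}D\dashrightarrow^{0}$ spliced with $D\xrightarrow{d}E\xrightarrow{e}C\dashrightarrow^{f_{*}\delta}$ to obtain a $3{\times}3$ diagram; the third column of that diagram, after pushout along the inflation $x$ and rearrangement, is visibly the claimed sequence, and the compatibility relations $\delta=d^{*}\delta''$ and $f_{*}\delta''=e^{*}\delta'$ provided by (ET4) identify the third-term extension as $e^{*}\delta$.

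The principal obstacle I expect is bookkeeping rather than conceptual: tracking the matrix entries, ensuring the sign in $(d,-g)$ (rather than $(d,g)$) is the one forced by the equivalence class of the realization, and most importantly verifying that the resulting extension is exactly $e^{*}\delta$ — not merely some extension whose realization is equivalent up to isomorphism — which requires careful application of the compatibilities in (ET4).
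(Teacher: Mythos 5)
The paper itself offers no proof of Lemma \ref{2.2}: it is imported verbatim from \cite[Proposition 1.20]{[LN]}, which in turn rests on the homotopy-pullback constructions of \cite{[NP]}. So your attempt is being measured against the literature rather than against an argument in the paper. Your Step 1 is correct and is the standard argument: since $f_*\delta=(1_C)^*(f_*\delta)$, the pair $(f,1_C)$ is a morphism of $\mathbb{E}$-extensions $\delta\to f_*\delta$, and the realization axiom supplies some $g$ with $gx=df$ and $eg=y$.

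Step 2 is where the entire content of the lemma lives, and there are genuine gaps. First, neither proposed application of the octahedron axioms typechecks: (ET4) takes two \emph{composable} inflations (the second must start at the middle term of the first conflation) and (ET4)$^{\textup{op}}$ two composable deflations, but route (a) feeds it $y:B\to C$ and $e:E\to C$, which share a target rather than compose, and route (b) feeds it $\left(\begin{smallmatrix}1\\0\end{smallmatrix}\right):A\to A\oplus D$ and $d:D\to E$, which again do not compose. The key diagram is therefore never actually produced; moreover, building the pullback of $y$ along $e$ in the usual way requires knowing that $(y,-e):B\oplus E\to C$ is an $\mathfrak{s}$-deflation, a fact that is essentially a special case of the lemma being proved (compare Lemma \ref{lem2.4}(1)), so route (a) risks circularity. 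Second, and more structurally, you fix $g$ in Step 1 by an arbitrary appeal to the realization axiom and then try to verify the mapping-cone assertion for that particular $g$. The lemma only claims existence of \emph{some} $g$ satisfying both conclusions; as in the triangulated setting (Neeman's ``good morphisms''), an arbitrary completion of the square need not make $A\to D\oplus B\to E$ a conflation, and the actual proof constructs $g$ and the conflation realizing $e^*\delta$ simultaneously from a single (ET4)-type diagram rather than in two independent stages. Your closing caveat---that identifying the third-term extension as exactly $e^*\delta$ is the delicate point---is accurate, but that is precisely the part the proposal leaves unproved.
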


\begin{lem}(\cite[Corollary 3.12]{[NP]})\label{lem2.0}
Let $\mathcal{C}$ be an extriangulated category. Then for any $\mathfrak{s}$-triangle $\xymatrixrowsep{0.05pc}\xymatrix{A\ar[r]^{x}&B\ar[r]^{y} & C\ar@{-->}[r]^{\delta}&}$, the following two sequences are exact.
$$\mathcal{C}(-,A)\rightarrow\mathcal{C}(-,B)\rightarrow\mathcal{C}(-,C)\rightarrow\mathbb{E}(-,A)\rightarrow\mathbb{E}(-,B)\rightarrow\mathbb{E}(-,C),$$
$$\mathcal{C}(C,-)\rightarrow\mathcal{C}(B,-)\rightarrow\mathcal{C}(A,-)\rightarrow\mathbb{E}(C,-)\rightarrow\mathbb{E}(B,-)\rightarrow\mathbb{E}(A,-).$$
\end{lem}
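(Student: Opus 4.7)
The plan is to exploit the symmetry between the axiom set $\{\mathrm{ET}i\}$ and $\{\mathrm{ET}i^{\mathrm{op}}\}$: the covariant six-term sequence is obtained from the contravariant one by passing to $\mathcal{C}^{\mathrm{op}}$, so I would concentrate on proving exactness of
$$\mathcal{C}(-,A)\xrightarrow{x_*}\mathcal{C}(-,B)\xrightarrow{y_*}\mathcal{C}(-,C)\xrightarrow{\delta^{\sharp}}\mathbb{E}(-,A)\xrightarrow{x_*}\mathbb{E}(-,B)\xrightarrow{y_*}\mathbb{E}(-,C),$$
where $\delta^{\sharp}(c)=c^*\delta$. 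Vanishing of consecutive compositions reduces to the standard identities $yx=0$ and $y^*\delta=0=x_*\delta$, so the real content lies in exactness at the four interior positions.

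For exactness at $\mathcal{C}(-,B)$, given $b\colon X\to B$ with $yb=0$, I would apply (ET3)$^{\mathrm{op}}$ to the diagram whose top row is the trivial $\mathfrak{s}$-triangle $X\xrightarrow{1}X\to 0$, whose bottom row is $\delta$, and whose middle/right vertical arrows are $b$ and $0$; the resulting left vertical arrow is the desired lift $a\colon X\to A$ with $xa=b$. For exactness at $\mathcal{C}(-,C)$, if $c^*\delta=0$ then $(0,c)$ is a morphism of $\mathbb{E}$-extensions from $0\in\mathbb{E}(X,A)$ to $\delta$; realizing it via (ET2) against the split $\mathfrak{s}$-triangle $A\to A\oplus X\to X$ produces $b_0\colon A\oplus X\to B$ with $yb_0=c\circ(0,1)$, so $b_0\circ\binom{0}{1}$ factors $c$ through $y$. (This is also recovered from Lemma \ref{2.1}.)

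For exactness at $\mathbb{E}(-,A)$, given $\delta'\in\mathbb{E}(X,A)$ with $x_*\delta'=0$, realize $\delta'$ by $A\xrightarrow{a'}M\xrightarrow{b'}X$ and invoke Lemma \ref{2.2} with $f=x$: the target $\mathfrak{s}$-triangle realizes $x_*\delta'=0$, hence splits as $B\xrightarrow{\binom{1}{0}}B\oplus X\xrightarrow{(0,1)}X$, and the middle map of Lemma \ref{2.2} takes the form $g=\binom{\beta}{b'}\colon M\to B\oplus X$ with $\beta a'=x$. Applying (ET3) to the diagram with top row the triangle realizing $\delta'$, bottom row $\delta$, and left two vertical arrows $1_A$ and $\beta$ completes it to a morphism of $\mathfrak{s}$-triangles whose right column is some $c\colon X\to C$; by the definition of a morphism of $\mathfrak{s}$-triangles one automatically obtains $c^*\delta=\delta'$.

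The main obstacle is exactness at $\mathbb{E}(-,B)$, which requires the octahedral axiom. Given $\delta''\in\mathbb{E}(X,B)$ with $y_*\delta''=0$, I realize $\delta''$ by $B\xrightarrow{n}N\xrightarrow{n'}X$ and apply (ET4) to the composable pair $(\delta,\delta'')$ (they share $B$), producing a $3\times 3$ diagram whose second row $A\to N\to E$ and third column $C\xrightarrow{d}E\xrightarrow{e}X$ are $\mathfrak{s}$-triangles, the former having some extension $\eta$ satisfying $x_*\eta=e^*\delta''$. Since $y_*\delta''=0$, the third column splits: $E\cong C\oplus X$, $e=(0,1)$. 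Setting $\delta':=\binom{0}{1}^{*}\eta\in\mathbb{E}(X,A)$ then gives
$$x_*\delta'=\binom{0}{1}^{*}x_*\eta=\binom{0}{1}^{*}e^*\delta''=\bigl(e\circ\tbinom{0}{1}\bigr)^{*}\delta''=\delta'',$$
completing the proof. The principal difficulty throughout is keeping track of the pullback and pushforward identities furnished by (ET4); the other cases reduce to transparent applications of (ET2), (ET3), (ET3)$^{\mathrm{op}}$, and the two cited lemmas.
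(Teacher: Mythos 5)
Your argument is correct. Note first that the paper itself offers no proof of this lemma: it is imported verbatim as \cite[Corollary 3.12]{[NP]}, so there is no in-paper argument to compare against. What you have written is essentially a faithful reconstruction of the proof in Nakaoka--Palu: exactness at $\mathcal{C}(-,B)$ and $\mathcal{C}(-,C)$ via (ET3)$^{\textup{op}}$ and the realization of the morphism of extensions $(0,c)\colon 0\to\delta$; exactness at $\mathbb{E}(-,A)$ by realizing $\delta'$, comparing with the split realization of $x_*\delta'=0$, and closing up with (ET3) so that the definition of a morphism of $\mathfrak{s}$-triangles hands you $c^*\delta=\delta'$; and exactness at $\mathbb{E}(-,B)$ via (ET4), where the splitting of the third column and the identity $x_*\eta=e^*\delta''$ give $x_*(s^*\eta)=\delta''$ for a section $s$ of $e$. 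All the bifunctoriality manipulations ($x_*s^*=s^*x_*$, $(es)^*=s^*e^*$) are used correctly. Two small remarks: the ``standard identities'' $yx=0$, $y^*\delta=0$, $x_*\delta=0$ that you invoke for the vanishing of consecutive composites are themselves nontrivial consequences of the axioms (they are early lemmas in \cite{[NP]}), so in a self-contained write-up they would need their own justification; and in the step at $\mathbb{E}(-,A)$ you do not actually need the full strength of Lemma \ref{2.2} --- realizing the morphism of extensions $(x,1_X)\colon\delta'\to x_*\delta'=0$ already produces the middle map $\left(\begin{smallmatrix}\beta\\ b'\end{smallmatrix}\right)$ with $\beta a'=x$. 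Neither point is a gap.
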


Let $\mathcal{C}$ be an extriangulated category with enough projectives $\mathcal{P}$ and enough injectives $\mathcal{I}$. Let $X$ be any object in $\mathcal{C}$. It admits an $\mathfrak{s}$-triangle
$$\xymatrixrowsep{0.05pc}\xymatrix{X\ar[r]&I^{0}\ar[r] & \Sigma X\ar@{-->}[r]^{\delta^X}&} (\textup{resp.} \xymatrixrowsep{0.05pc}\xymatrix{\Omega X\ar[r]&P_{0}\ar[r] &  X\ar@{-->}[r]^{\delta_X}&})$$ with $I^0\in\mathcal{I}$ (resp. $P_0\in\mathcal{P}$). We can get $\mathfrak{s}$-triangles $$\xymatrixrowsep{0.05pc}\xymatrix{\Sigma^iX\ar[r]&I^{i}\ar[r] & \Sigma^{i+1} X\ar@{-->}[r]^{\delta^{\Sigma^iX}}&} (\textup{resp.} \xymatrixrowsep{0.05pc}\xymatrix{\Omega^{i+1}X\ar[r]&P_{i}\ar[r] &  \Omega^iX\ar@{-->}[r]^{\delta_{\Omega^{i}X}}&})$$ with 
$I^i\in\mathcal{I}$ (resp. $P_i\in\mathcal{P}$) for $i>0$ recursively.

For convenience, we denote $\mathbb{E}(X,\Sigma^iY)\cong \mathbb{E}(\Omega^iX, Y)$ by $\mathbb{E}^i(X,Y)$, where the equivalence follows from \cite[Lemma 5.1]{[LN]}.

The following result extends the exact sequences appeared in Lemma \ref{lem2.0}.

\begin{lem}(\cite[Proposition 5.2]{[LN]})\label{lem2.5}
Let $\mathcal{C}$ be an extriangulated category with enough projectives $\mathcal{P}$ and enough injectives $\mathcal{I}$. Then for any $\mathfrak{s}$-triangle $$\xymatrixrowsep{0.05pc}\xymatrix{A\ar[r]^{x}&B\ar[r]^{y} & C\ar@{-->}[r]^{\delta}&},$$
the following two sequences are exact.
$$\mathcal{C}(-,A)\rightarrow\mathcal{C}(-,B)\rightarrow\mathcal{C}(-,C)\rightarrow\mathbb{E}(-,A)\rightarrow\mathbb{E}(-,B)\rightarrow\mathbb{E}(-,C)
\rightarrow\mathbb{E}^2(-, A)$$
$$\rightarrow\mathbb{E}^2(-, B)\rightarrow\mathbb{E}^2(-,C)\rightarrow\cdots\rightarrow \mathbb{E}^i(-,A)\rightarrow\mathbb{E}^i(-,B)\rightarrow\mathbb{E}^i(-,C)\rightarrow\cdots,$$
$$\mathcal{C}(C,-)\rightarrow\mathcal{C}(B,-)\rightarrow\mathcal{C}(A,-)\rightarrow\mathbb{E}(C,-)\rightarrow\mathbb{E}(B,-)
\rightarrow\mathbb{E}(A,-)\rightarrow\mathbb{E}^2(C,-)$$
$$\rightarrow\mathbb{E}^2(B,-)\rightarrow\mathbb{E}^2(A,-)\rightarrow\cdots\rightarrow
\mathbb{E}^i(C,-)\rightarrow\mathbb{E}^i(B,-)\rightarrow\mathbb{E}^i(A,-)\rightarrow\cdots.$$
\end{lem}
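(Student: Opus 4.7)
The plan is to prove the first exact sequence by induction on $i$; the second follows by a dual argument using the projective cover triangles $\Omega X\to P_{0}\to X$ and the vanishing $\mathbb{E}(\mathcal{P},-)=0$. The base case $i=1$ is exactly Lemma~\ref{lem2.0}, which supplies the six-term exact sequence ending at $\mathbb{E}(-,C)$.

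The crux of the inductive step is a shift lemma: from the $\mathfrak{s}$-triangle $A\xrightarrow{x} B\xrightarrow{y} C\dashrightarrow\delta$, choose injective $\mathfrak{s}$-triangles $A\to I^0_A\to\Sigma A$ and $C\to I^0_C\to\Sigma C$ with $I^0_A,I^0_C\in\mathcal{I}$, and construct a $3\times 3$ commutative diagram
$$\xymatrix{
A\ar[r]^x \ar[d]& B\ar[r]^y \ar[d] & C\ar[d]\\
I^0_A \ar[r] \ar[d] & I^0_A\oplus I^0_C \ar[r] \ar[d] & I^0_C \ar[d]\\
\Sigma A \ar[r] & \Sigma B \ar[r] & \Sigma C
}$$
whose rows and columns are all $\mathfrak{s}$-triangles. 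The key middle morphism $B\to I^0_A\oplus I^0_C$ is assembled as $(\phi,\,i_C\circ y)$, where $\phi:B\to I^0_A$ is a lift of the embedding $A\to I^0_A$ along $x$ and $i_C:C\to I^0_C$ is the given embedding. Such a lift exists because Lemma~\ref{lem2.0} applied to $A\to B\to C$ yields exactness of $\mathcal{C}(B,I^0_A)\to\mathcal{C}(A,I^0_A)\to\mathbb{E}(C,I^0_A)=0$. That the middle column is a genuine $\mathfrak{s}$-triangle, and that the bottom row is induced as its ``cokernel'' realizing the shifted extension, is then deduced from several applications of (ET3), (ET4) and Lemma~\ref{2.2}; in particular, the middle column then serves as an injective envelope triangle for $B$.

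With the shift lemma established, iteration produces $\mathfrak{s}$-triangles $\Sigma^{i-1}A\to\Sigma^{i-1}B\to\Sigma^{i-1}C\dashrightarrow$ for every $i\geq 1$. Applying Lemma~\ref{lem2.0} to the $i$-th shifted triangle and using the definitional identification $\mathbb{E}(-,\Sigma^{i-1}X)=\mathbb{E}^{i}(-,X)$ provides the exact block $\mathbb{E}^i(-,A)\to\mathbb{E}^i(-,B)\to\mathbb{E}^i(-,C)$. The boundary map $\mathbb{E}^{i-1}(-,C)\to\mathbb{E}^i(-,A)$ is then obtained by composing the six-term connecting morphism $\mathcal{C}(-,\Sigma^{i-1}C)\to\mathbb{E}(-,\Sigma^{i-1}A)$ of the $i$-th shifted triangle with the surjection $\mathcal{C}(-,\Sigma^{i-1}C)\twoheadrightarrow\mathbb{E}(-,\Sigma^{i-2}C)=\mathbb{E}^{i-1}(-,C)$ coming from Lemma~\ref{lem2.0} applied to the injective triangle $\Sigma^{i-2}C\to I^{i-2}_C\to\Sigma^{i-1}C$ together with $\mathbb{E}(-,I^{i-2}_C)=0$. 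A short diagram chase confirms that the splice is well-defined and exact at the boundary terms.

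The main obstacle is the shift lemma itself. In an abelian or exact category it is the routine horseshoe construction, but in the extriangulated setting one must carefully sequence applications of (ET4) (or pushout-type arguments via Lemma~\ref{2.2}) to verify that the middle column is an honest $\mathfrak{s}$-triangle and that the bottom row is an $\mathfrak{s}$-triangle realizing the appropriate shift of $\delta$. Once this technical core is in place, the assembly of the long exact sequence is a mechanical splicing of instances of Lemma~\ref{lem2.0}.
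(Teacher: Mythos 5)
The paper does not actually prove Lemma~\ref{lem2.5}: it is imported verbatim as \cite[Proposition 5.2]{[LN]}, so there is no in-paper argument to compare against. Your sketch is the standard way to establish that result, and its skeleton is sound: the base case is Lemma~\ref{lem2.0}, the horseshoe-type $3\times 3$ diagram built from $\binom{\phi}{i_C y}\colon B\to I^0_A\oplus I^0_C$ (with $\phi x=i_A$, which exists since $\mathbb{E}(C,I^0_A)=0$) does yield an $\mathfrak{s}$-triangle $\Sigma A\to \Sigma B\to \Sigma C$ --- one can make the middle column honest by factoring $\binom{\phi}{i_C y}$ as the inflation $B\to C\oplus I^0_A$ supplied by the dual of Lemma~\ref{2.2} followed by $i_C\oplus 1$, and then invoking (ET4) and the $3\times 3$ lemma of Nakaoka--Palu --- and splicing the six-term sequences of the shifted triangles gives the long exact sequence. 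One point in your write-up deserves more care than you give it: the boundary map is not literally a ``composition'' of $\mathcal{C}(-,\Sigma^{i-1}C)\to\mathbb{E}(-,\Sigma^{i-1}A)$ with the surjection $\mathcal{C}(-,\Sigma^{i-1}C)\twoheadrightarrow\mathbb{E}^{i-1}(-,C)$, but a descent along it, and the descent is not automatic from injectivity of $I^{i-2}_C$ alone, since $\mathbb{E}(I,-)$ need not vanish for $I$ injective. You must check that the connecting map of the shifted triangle kills the image of $\mathcal{C}(-,I^{i-2}_C)$; this follows because the deflation $I^{i-2}_C\to\Sigma^{i-1}C$ factors through $\Sigma^{i-1}B\to\Sigma^{i-1}C$ by commutativity of the lower-right square of your horseshoe diagram together with the splitness of its middle row. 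With that verification (and the routine check that $\mathbb{E}^i(-,B)$ is independent of the particular cosyzygy $\Sigma^{i}B$ produced by the construction), your argument is complete and agrees with the proof in \cite{[LN]}.
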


\begin{lem}\label{lem2.4} Let $\mathcal{C}$ be an extriangulated category with enough projectives $\mathcal{P}$ and $f:X\rightarrow Y$ be a morphism in $\mathcal{C}$.

\textup{(1)} If $\pi:P\rightarrow Y$ is an $\mathfrak{s}$-deflation with $P\in\mathcal{P}$, then $(f,-\pi): X\oplus P\rightarrow Y$ is an $\mathfrak{s}$-deflation and $\underline{(f,-\pi)}\cong\underline{f}$ in \textup{Mor-}$\mathcal{C}/[\mathcal{P}]$.

\textup{(2)} If $h:X\rightarrow Z$ is a morphism in $\mathcal{C}$ and $g:Y\rightarrow Z$ is an $\mathfrak{s}$-deflation such that $\underline{gf}=\underline{h}$ in \textup{Mor-}$\mathcal{C}/[\mathcal{P}]$, then there exists an object $P\in\mathcal{P}$ and two morphisms $u:X\rightarrow P$ and $v:P\rightarrow Y$ such that $g(f-vu)=h$.
\end{lem}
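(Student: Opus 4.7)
For (1), I plan to factor $(f,-\pi)$ as a composition of two $\mathfrak{s}$-deflations. Concretely,
\[
(f,-\pi)=(f,-1_{Y})\circ(1_{X}\oplus\pi)\colon X\oplus P\longrightarrow X\oplus Y\longrightarrow Y.
\]
The right factor $1_{X}\oplus\pi$ is an $\mathfrak{s}$-deflation because $\mathfrak{s}$ is additive and both $1_{X}$ and $\pi$ are $\mathfrak{s}$-deflations. For the left factor, observe that
\[
(f,-1_{Y})=(0,-1_{Y})\circ\bigl(\begin{smallmatrix}1_{X}&0\\-f&1_{Y}\end{smallmatrix}\bigr),
\]
where the matrix on the right is an automorphism of $X\oplus Y$, and $(0,-1_{Y})=(-1_{Y})\circ(0,1_{Y})$ is an $\mathfrak{s}$-deflation since $(0,1_{Y})$ is the canonical split deflation appearing in $\mathfrak{s}(0_{Y,X})$ and $-1_{Y}$ is an automorphism of $Y$. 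Pre- or post-composition with an automorphism preserves being an $\mathfrak{s}$-deflation, and compositions of $\mathfrak{s}$-deflations are $\mathfrak{s}$-deflations by (ET4)$^{\textup{op}}$, so $(f,-\pi)$ is an $\mathfrak{s}$-deflation.

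For the second assertion in (1), the commutative square in $\mathcal{C}$
\[
\xymatrix{X\oplus P\ar[r]^-{(f,-\pi)}\ar[d]_-{(1,0)} & Y\ar@{=}[d]\\ X\ar[r]^-{f} & Y}
\]
becomes a morphism in Mor-$\mathcal{C}/[\mathcal{P}]$ whose left vertical $\underline{(1,0)}$ is invertible with inverse $\underline{\binom{1}{0}}$ (since $P\in\mathcal{P}$ is killed in $\mathcal{C}/[\mathcal{P}]$), yielding $\underline{(f,-\pi)}\cong\underline{f}$. For (2), the hypothesis $\underline{gf}=\underline{h}$ means $gf-h$ factors through some $P\in\mathcal{P}$; write $gf-h=wu$ with $u\colon X\to P$ and $w\colon P\to Z$. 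Since $g$ is an $\mathfrak{s}$-deflation and $P$ is projective, the defining lifting property yields $v\colon P\to Y$ with $gv=w$, and then $g(f-vu)=gf-wu=h$, as required.

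No step is a serious obstacle: the whole argument reduces to matrix bookkeeping together with three standard closure properties of $\mathfrak{s}$-deflations (finite direct sums, composition via (ET4)$^{\textup{op}}$, and pre/post-composition with automorphisms) and the defining lifting property of projectives against deflations. The one subtle ingredient is spotting the factorization $(f,-\pi)=(f,-1_{Y})\circ(1_{X}\oplus\pi)$, which converts the a priori unstructured morphism $(f,-\pi)$ into a composition of manifest $\mathfrak{s}$-deflations; once that observation is made, the rest is formal.
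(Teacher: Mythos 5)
Your proof is correct. Part (2) and the isomorphism $\underline{(f,-\pi)}\cong\underline{f}$ are argued exactly as in the paper (the paper dismisses the latter as ``clear'' and proves (2) by the same lift of $w$ through the deflation $g$ using projectivity of $P$). Where you genuinely diverge is the first assertion of (1): the paper disposes of it in one line by citing \cite[Corollary 3.16]{[NP]} or the dual of Lemma \ref{2.2}, i.e.\ the homotopy-pullback statement which directly produces an $\mathfrak{s}$-triangle ending in $(\pi,-f)\colon P\oplus X\to Y$. You instead factor $(f,-\pi)=(f,-1_{Y})\circ(1_{X}\oplus\pi)$ and reduce everything to three elementary closure properties of $\mathfrak{s}$-deflations (direct sums via additivity of $\mathfrak{s}$, pre/post-composition with automorphisms, and composition via (ET4)$^{\textup{op}}$). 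Your route is more self-contained and makes transparent exactly which axioms are used, and it does not require knowing the cokernel/cocone of $(f,-\pi)$; the paper's route is shorter and, as a by-product of Lemma \ref{2.2}, also identifies the associated $\mathfrak{s}$-triangle $E\to X\oplus P\to Y$, which is what gets used later (e.g.\ in Lemma \ref{lem5.2}). One small wording slip: the square you draw with $(1,0)$ on the left and $1_{Y}$ on the right does \emph{not} commute in $\mathcal{C}$ (one composite is $(f,0)$, the other $(f,-\pi)$); it commutes only after passing to $\mathcal{C}/[\mathcal{P}]$, since the difference $(0,-\pi)$ factors through $P$. That is all you need, but you should not call it ``the commutative square in $\mathcal{C}$.''
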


\begin{proof}
(1) The first assertion follows from \cite[Corollary 3.16]{[NP]} or the dual of Lemma \ref{2.2}. The second assertion is clear.

(2) Since $\underline{gf}=\underline{h}$,  there is an object $P\in\mathcal{P}$ and two morphisms $u:X\rightarrow P$ and $w:P\rightarrow Z$ such that $gf-h=wu$. Since $g:Y\rightarrow Z$ is an $\mathfrak{s}$-deflation, there exists a morphism $v:P\rightarrow Y$ such that $w=gv$. Therefore, $g(f-vu)=h$.
\end{proof}

\section{Proof of Theorem 1.1}

Throughout this paper, we assume that $(\mathcal{C}, \mathbb{E}, \mathfrak{s})$ is an extriangulated category.


Let $\mathcal{M}$ be a full subcategory of $\mathcal{C}$. We denote by $\mathfrak{s}$-def$(\mathcal{M})$ (resp. $\mathfrak{s}\textup{-inf}(\mathcal{M})$) the full subcategory of $\textup{Mor}(\mathcal{M})$ consisting of $\mathfrak{s}$-deflations (resp. $\mathfrak{s}$-inflations). Recall that the full subcategory of $\mathfrak{s}$-def$(\mathcal{M})$ consisting of split epimorphisms (resp. split monomorphisms) is denoted by s-epi$(\mathcal{M})$ (resp. s-mono$(\mathcal{M})$).  We denote by sp-epi$(\mathcal{M})$ (resp. si-mono$(\mathcal{M})$) the full subcategory of $\mathfrak{s}$-def$(\mathcal{M})$ consisting of $(M\xrightarrow{1}M)\oplus(P\rightarrow M')$ (resp. $(M\xrightarrow{1}M)\oplus(M'\rightarrow I)$) with $P\in\mathcal{P}$ (resp. $I\in\mathcal{I}$).

\begin{lem}\label{lem3.0}
Let $\mathcal{C}$ be an extriangulated category with enough projectives $\mathcal{P}$ and $\mathcal{M}$ be a full subcategory of $\mathcal{C}$ containing $\mathcal{P}$. Assume that the following
$$\xymatrix{
 X\ar[r]^{k}\ar[d]^{g} & M_1 \ar[r]^{f}\ar[d]^{a} & M_2\ar@{-->}[r]^{\delta}\ar[d]^{b} & \\
 X'\ar[r]^{k'} & M_1' \ar[r]^{f'} &  M_2' \ar@{-->}[r]^{\delta'}&
}$$
is a morphism of $\mathfrak{s}$-triangles with $M_i, M_i'\in\mathcal{M}$. Then

\textup{(1)} The following statements are equivalent.

\ \ \ \textup{(a)} The morphism $\underline{b}$ factors through $\underline{f}'$ in $\mathcal{M}/[\mathcal{P}]$.

\ \ \ \textup{(b)} The morphism $b$ factors through $f'$.

\ \ \ \textup{(c)} The morphism $(a,b)$ factors through some object in $\textup{s-epi}(\mathcal{M})$.

\textup{(2)} The following statements are equivalent.

\ \ \ \textup{(a)} The morphism $\underline{a}$ factors through $\underline{f}$ in $\mathcal{M}/[\mathcal{P}]$.

 \ \ \ \textup{(b)} The morphism $(a,b)$ factors through some object in $\textup{sp-epi}(\mathcal{M})$.
\end{lem}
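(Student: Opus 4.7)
The plan is to cycle through the equivalences in each part, using Lemma \ref{lem2.4} to pass between genuine factorizations and factorizations modulo $[\mathcal{P}]$, and exhibiting the relevant factorizations through $\textup{s-epi}(\mathcal{M})$ and $\textup{sp-epi}(\mathcal{M})$ explicitly via block matrices.

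For Part (1), the implication (b)$\Rightarrow$(a) is trivial, and for (a)$\Rightarrow$(b), if $\underline{b} = \underline{f't}$ in $\mathcal{M}/[\mathcal{P}]$ then Lemma \ref{lem2.4}(2), applied to the $\mathfrak{s}$-deflation $g = f'$ and $h = b$, produces $P \in \mathcal{P}$ and $u, v$ with $f'(t - vu) = b$, giving a genuine factorization of $b$ through $f'$. For (c)$\Rightarrow$(b), note that any split epimorphism in $\mathcal{M}$ is isomorphic to a canonical projection $(1,0) : A \oplus B \to A$, and writing a factorization of $(a,b)$ through such a projection in block form forces $f' a_2' = b_2$ and $f' a_2'' = 0$; composing then gives $b = f'(a_2' b_1)$. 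Conversely, for (b)$\Rightarrow$(c) I will use the split epimorphism $(1, 0) : M_2 \oplus M_1 \to M_2$ in $\textup{s-epi}(\mathcal{M})$ together with the explicit factorization
$$(a_1, b_1) = \bigl((f,\, 1_{M_1})^{\textup{T}},\ 1_{M_2}\bigr),\qquad (a_2, b_2) = \bigl((t,\ a - tf),\ b\bigr),$$
whose commutativity reduces to $b = f't$ and $bf = f'a$.

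For Part (2), the implication (b)$\Rightarrow$(a) is a similar block-matrix computation: from any factorization of $(a,b)$ through $(M \xrightarrow{1} M) \oplus (P \xrightarrow{\pi} M')$ one extracts $a = a_2' b_1' f + a_2'' a_1''$ with $a_1'' : M_1 \to P$ and $a_2'' : P \to M_1'$, so $a_2'' a_1''$ factors through $P \in \mathcal{P}$ and hence $\underline{a}$ factors through $\underline{f}$ in $\mathcal{M}/[\mathcal{P}]$.

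The main obstacle is (a)$\Rightarrow$(b). From $\underline{a} = \underline{sf}$ one obtains $a = sf + wu$ in $\mathcal{C}$ with $u : M_1 \to P$, $w : P \to M_1'$ and $P \in \mathcal{P}$, but the morphism $f'w : P \to M_2'$ is not in general an $\mathfrak{s}$-deflation, so the naive candidate $(M_2 \xrightarrow{1} M_2) \oplus (P \xrightarrow{f'w} M_2')$ fails to lie in $\textup{sp-epi}(\mathcal{M})$. My plan is to remedy this by picking an $\mathfrak{s}$-deflation $\pi'' : P'' \to M_2'$ with $P'' \in \mathcal{P}$ (possible since $\mathcal{C}$ has enough projectives) and a lift $w' : P'' \to M_1'$ with $f'w' = \pi''$ (possible since $P''$ is projective and $f'$ is a deflation), and then taking the object
$$(M_2 \xrightarrow{1} M_2) \oplus \bigl(P \oplus P'' \xrightarrow{(f'w,\, -\pi'')} M_2'\bigr),$$
whose bottom component is an $\mathfrak{s}$-deflation by Lemma \ref{lem2.4}(1). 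The sought-after factorization of $(a, b)$ is then
$$(a_1, b_1) = \bigl((f,\, u,\, 0)^{\textup{T}},\ (1,\, b - f's)^{\textup{T}}\bigr),\qquad (a_2, b_2) = \bigl((s,\, w,\, -w'),\ (f's,\, 1)\bigr),$$
and both commutativity and composition reduce to the single identity $(b - f's)f = f'wu$, which is immediate from $f'a = bf$ and $a = sf + wu$.
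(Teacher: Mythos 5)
Your proposal is correct, and its overall strategy coincides with the paper's: use Lemma \ref{lem2.4} to convert stable factorizations into genuine ones, and exhibit the factorizations through $\textup{s-epi}(\mathcal{M})$ and $\textup{sp-epi}(\mathcal{M})$ by explicit block matrices. There are two points of divergence worth recording. First, for (b)$\Leftrightarrow$(c) in part (1) the paper simply cites Lemma \ref{lem2.1} (the morphism-category statement $\mathcal{R}=[\textup{s-epi}]$), whereas you reprove it by hand; your argument is fine (for (c)$\Rightarrow$(b) one can even avoid decomposing the split epimorphism $e$ with section $s$, since $b=b_2b_1=b_2(es)b_1=f'(a_2sb_1)$), but it is the same computation that underlies the cited lemma. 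Second, and more substantively, in (a)$\Rightarrow$(b) of part (2) you correctly isolate the subtlety that $f'w:P\to M_2'$ need not be an $\mathfrak{s}$-deflation; the paper resolves it differently, by first choosing an $\mathfrak{s}$-deflation $a_1:P\to M_1'$ with $P\in\mathcal{P}$, lifting the projective part of $a-sf$ through $a_1$ to write $a-sf=a_1a_2$, and then using the composite deflation $f'a_1:P\to M_2'$ as the second summand $(P\to M_2')$ of the $\textup{sp-epi}$ object. Your alternative --- keeping $w$ and repairing $f'w$ by adjoining a projective deflation $\pi'':P''\to M_2'$ via Lemma \ref{lem2.4}(1) --- is equally valid; it trades the paper's reliance on closure of $\mathfrak{s}$-deflations under composition for one extra projective summand and a slightly larger matrix, and your verification that the resulting squares commute and compose to $(a,b)$ checks out.
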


\begin{proof}
(1) Since (b)$\Leftrightarrow$(c) follows from Lemma \ref{lem2.1}  and (b)$\Rightarrow$(a) is clear, we only prove (a)$\Rightarrow$(b).
Suppose that there is a morphism $\underline{p}:M_2\rightarrow M_1'$ such that $\underline{f}'\underline{p}=\underline{b}$. By Lemma \ref{lem2.4}, there exists an object $P\in\mathcal{P}$ and two morphisms $u:M_2\rightarrow P$ and $v: P\rightarrow M_1'$ such that $f'(p-vu)=b$. Thus $b$ factors through $f'$.

(2) (a)$\Rightarrow$(b). Suppose that there is a morphism $\underline{p}:M_2\rightarrow M_1'$ such that $\underline{p}\underline{f}=\underline{a}$.
Since $\mathcal{C}$ has enough projectives, there is an $\mathfrak{s}$-deflation $a_1:P\rightarrow M_1'$ with $P\in\mathcal{P}$. It is easy to see that $a-pf$ factors through $a_1$. We assume that $a-pf=a_1a_2$ where $a_2:M_1\rightarrow P$. Since $(b-f'p)f=f'a-f'pf=f'a_1a_2$, we have the following commutative diagram.
$$\xymatrix{ M_1\ar[rr]^{f}\ar[dd]^{a}\ar[rd]^{\left(
                      \begin{smallmatrix}
                        f \\
                        a_2 \\
                      \end{smallmatrix}
                    \right)
}& & M_2 \ar[dd]^{b}\ar[rd]^{\left(
                      \begin{smallmatrix}
                        1 \\
                        b-f'p \\
                      \end{smallmatrix}
                    \right)
} &\\
& M_2\oplus P \ar[rr]^{\ \ \ \ \ \ \ \ \ \left(
                      \begin{smallmatrix}
                        1 & 0 \\
                        0 & f'a_1 \\
                      \end{smallmatrix}
                    \right)
}\ar[ld]^{(p,a_1)} & &M_2\oplus M_2'\ar[ld]^{(f'p,1)}\\
 M_1'\ar[rr]^{f'}& & M_2'&
}$$ In other words, $(a,b)$ factors through $(M_2\oplus P\xrightarrow{\left(
                      \begin{smallmatrix}
                        1 & 0 \\
                        0 & f'a_1 \\
                      \end{smallmatrix}
                    \right)} M_2\oplus M_2')\in \textup{sp-epi}(\mathcal{M})$.

(b)$\Rightarrow$(a). Assume that the morphism $(a,b)$ factors through $(M\oplus P\xrightarrow{\left(
                      \begin{smallmatrix}
                        1 & 0 \\
                        0 & \pi \\
                      \end{smallmatrix}
                    \right)} M\oplus M')\in\textup{sp-epi}(\mathcal{M})$. Suppose that the following diagram
$$\xymatrix{ M_1\ar[rr]^{f}\ar[dd]^{a}\ar[rd]^{\left(
                      \begin{smallmatrix}
                        a_1 \\
                        a'_1 \\
                      \end{smallmatrix}
                    \right)
}& & M_2 \ar[dd]^{b}\ar[rd]^{\left(
                      \begin{smallmatrix}
                        b_1 \\
                        b_1' \\
                      \end{smallmatrix}
                    \right)
} &\\
& M\oplus P \ar[rr]^{\ \ \ \ \ \ \ \ \ \left(
                      \begin{smallmatrix}
                        1 & 0 \\
                        0 & \pi \\
                      \end{smallmatrix}
                    \right)
}\ar[ld]^{(a_2,a_2')} & &M\oplus M'\ar[ld]^{(b_2,b_2')}\\
 M_1'\ar[rr]^{f'}& & M_2'&
}$$ is commutative. Let $p=a_2b_1: M_2\rightarrow M_1'$, then $pf=a_2b_1f=a_2a_1$, thus $\underline{a}=\underline{a_2}\underline{a_1}=\underline{p}\underline{f}$.
\end{proof}

\begin{thm}\label{3.1}
Let $\mathcal{C}$ be an extriangulated category and $\mathcal{M}$ be a full subcategory of $\mathcal{C}$.

\textup{(1)} If $\mathcal{C}$ has enough projectives $\mathcal{P}$ and $\mathcal{M}$ contains $\mathcal{P}$, then   $\mathfrak{s}\textup{-def}(\mathcal{M})/[\textup{s-epi}(\mathcal{M})]\cong \textup{mod-}\mathcal{M}/[\mathcal{P}]$ and  $\mathfrak{s}\textup{-def}(\mathcal{M})/[\textup{sp-epi}(\mathcal{M})]\cong (\textup{mod-}(\mathcal{M}/[\mathcal{P}])^{\textup{op}})^{\textup{op}}$.

\textup{(2)} If $\mathcal{C}$ has enough injectives $\mathcal{I}$ and $\mathcal{M}$ contains $\mathcal{I}$, then
$\mathfrak{s}\textup{-inf}(\mathcal{M})/[\textup{s-mono}(\mathcal{M})]\cong (\textup{mod-}(\mathcal{M}/[\mathcal{I}])^{\textup{op}})^{\textup{op}}$ and  $\mathfrak{s}\textup{-inf}(\mathcal{M})/[\textup{si-mono}(\mathcal{M})]\cong \textup{mod-}\mathcal{M}/[\mathcal{I}]$.
\end{thm}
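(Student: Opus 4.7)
The plan is to construct, for part (1), two additive functors
$$F \colon \mathfrak{s}\textup{-def}(\mathcal{M}) \to \textup{mod-}(\mathcal{M}/[\mathcal{P}]), \qquad G \colon \mathfrak{s}\textup{-def}(\mathcal{M}) \to (\textup{mod-}(\mathcal{M}/[\mathcal{P}])^{\textup{op}})^{\textup{op}},$$
defined on objects by the finite presentations
$$F(f) = \textup{coker}\bigl(\mathcal{M}/[\mathcal{P}](-, M_1) \xrightarrow{\underline{f}_{\ast}} \mathcal{M}/[\mathcal{P}](-, M_2)\bigr), \quad G(f) = \textup{coker}\bigl(\mathcal{M}/[\mathcal{P}](M_2, -) \xrightarrow{\underline{f}^{\ast}} \mathcal{M}/[\mathcal{P}](M_1, -)\bigr),$$
for an $\mathfrak{s}$-deflation $f \colon M_1 \to M_2$. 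A morphism $(a,b)\colon f \to f'$ of $\mathfrak{s}$-deflations gives a commutative square of Yoneda maps, inducing a morphism on cokernels (for $G$ the induced map runs from $G(f')$ to $G(f)$, which is why the target is an opposite category). Both assignments are visibly additive and have finitely presented values.

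For essential surjectivity, take any presentation $\mathcal{M}/[\mathcal{P}](-,M_1) \xrightarrow{\underline{g}} \mathcal{M}/[\mathcal{P}](-,M_2) \to N \to 0$, lift $\underline{g}$ to a morphism $g\colon M_1 \to M_2$ in $\mathcal{M}$, and pick an $\mathfrak{s}$-deflation $\pi \colon P \to M_2$ with $P \in \mathcal{P}$ (using enough projectives). Lemma \ref{lem2.4}(1) shows that $(g,-\pi)\colon M_1 \oplus P \to M_2$ is again an $\mathfrak{s}$-deflation, and since $\mathcal{M}/[\mathcal{P}](-,P)=0$ for $P \in \mathcal{P}$ we get $F((g,-\pi)) \cong N$; the same object also realises an arbitrary presentation on the $G$ side. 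For fullness, start from $\alpha \colon F(f) \to F(f')$. Projectivity of the representable $\mathcal{M}/[\mathcal{P}](-,M_1)$ inside $\textup{mod-}(\mathcal{M}/[\mathcal{P}])$ lets me lift $\alpha$ to a commutative square in $\mathcal{M}/[\mathcal{P}]$, so there exist $\underline{a}, \underline{b}$ with $\underline{f'}\,\underline{a} = \underline{b}\,\underline{f}$. Picking any lifts $a,b$ in $\mathcal{M}$, the difference $f'a - bf$ factors through an object of $\mathcal{P}$, and Lemma \ref{lem2.4}(2), applied to the $\mathfrak{s}$-deflation $f'$, supplies $a'$ with $\underline{a'} = \underline{a}$ and $f'a' = bf$ exactly. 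Thus $(a',b)$ is a genuine morphism in $\mathfrak{s}\textup{-def}(\mathcal{M})$ with $F((a',b)) = \alpha$; the same argument, using the Yoneda lemma on the covariant side, handles $G$.

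The key identification of the kernels is exactly what Lemma \ref{lem3.0} provides. A morphism $(a,b)\colon f \to f'$ satisfies $F((a,b))=0$ iff the image of the generator $\underline{b} \in \mathcal{M}/[\mathcal{P}](M_2,M_2')$ in $F(f')$ is zero, which by Yoneda is the statement that $\underline{b}$ factors through $\underline{f'}$ in $\mathcal{M}/[\mathcal{P}]$; Lemma \ref{lem3.0}(1) says this is equivalent to $(a,b)$ factoring through an object of $\textup{s-epi}(\mathcal{M})$. Dually, $G((a,b))=0$ is equivalent to $\underline{a}$ factoring through $\underline{f}$ in $\mathcal{M}/[\mathcal{P}]$, which by Lemma \ref{lem3.0}(2) is equivalent to $(a,b)$ factoring through an object of $\textup{sp-epi}(\mathcal{M})$. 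Combining density, fullness and the exact identification of the kernel ideal yields the two equivalences of part (1). Part (2) follows by applying the same argument to the opposite extriangulated category $(\mathcal{C}^{\textup{op}}, \mathbb{E}^{\textup{op}}, \mathfrak{s}^{\textup{op}})$, under which projectives and injectives, deflations and inflations, s-epi and s-mono, and sp-epi and si-mono are interchanged.

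I expect the essential-surjectivity trick (turning an arbitrary morphism in $\mathcal{M}$ into an $\mathfrak{s}$-deflation without changing its class modulo $\mathcal{P}$) and the fullness step (promoting an equality modulo $\mathcal{P}$ to a strictly commutative square of $\mathfrak{s}$-deflations) to be the only technically subtle points. Both obstacles dissolve cleanly once Lemma \ref{lem2.4} is invoked, so the bulk of the work has already been outsourced to the preliminary lemmas \ref{lem2.1}, \ref{lem2.4} and \ref{lem3.0}, and the remainder is the functorial packaging described above.
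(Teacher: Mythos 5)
Your proposal is correct and takes essentially the same route as the paper: the paper sends $f\mapsto\underline{f}$ into $\textup{Mor}(\mathcal{M}/[\mathcal{P}])$, proves density and fullness with Lemma \ref{lem2.4}, identifies the kernel ideals with Lemma \ref{lem3.0}, and then cites Lemma \ref{lem2.1} to land in the module categories, whereas you simply compose those two steps into a single cokernel-of-Yoneda functor and inline the relevant Yoneda/projectivity arguments. The key lemmas and the two technically subtle points you single out (adding a projective deflation summand for density, correcting a lift modulo $\mathcal{P}$ for fullness) are exactly those of the paper's proof.
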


\begin{proof} Since (2) is dual to (1), we only prove (1).

Define a functor $$F: \mathfrak{s}\textup{-def}(\mathcal{M})\rightarrow\textup{Mor}(\mathcal{M}/[\mathcal{P}]),\ \ (M_1\xrightarrow{f}M_2) \mapsto (M_1\xrightarrow{\underline{f}}M_2).$$ For any object $\underline{f}:M_1\rightarrow M_2$ in $\textup{Mor}(\mathcal{M}/[\mathcal{P}])$, by Lemma \ref{lem2.4} there is an object $P\in\mathcal{P}$ and an $\mathfrak{s}$-deflation $(f,-\pi):M_1\oplus P\rightarrow M_2$  such that $\underline{(f,-\pi)}\cong\underline{f}$. Therefore, $F(f,-\pi)\cong \underline{f}$ and $F$ is dense.

Assume that $f:M_1\rightarrow M_2$ and $f':M'_1\rightarrow M'_2$ are objects in $\mathfrak{s}\textup{-def}(\mathcal{M})$  and $(\underline{a},\underline{b})$ is a morphism in $\textup{Mor}(\mathcal{M}/[\mathcal{P}])$ from $\underline{f}$ to $\underline{f'}$.
Then $\underline{b}\underline{f}=\underline{f'}\underline{a}$. By Lemma \ref{lem2.4}, there exists an object $Q\in\mathcal{P}$ and two morphisms $u:M_1\rightarrow Q$ and $v:Q\rightarrow M_1'$ such that $f'(a-vu)=bf$.  Thus, $F(a-vu,b)=(\underline{a},\underline{b})$ and the functor $F$ is full.

 The  functor $F$ induces a full and dense functor $\widetilde{F}: \mathfrak{s}\textup{-def}(\mathcal{M})\rightarrow \textup{Mor}(\mathcal{M}/[\mathcal{P}])/\mathcal{R}$.
By Lemma \ref{lem3.0}(1), we have $\mathfrak{s}\textup{-def}(\mathcal{M})/[\textup{s-epi}(\mathcal{C})]\cong \textup{Mor}(\mathcal{M}/[\mathcal{P}])/\mathcal{R}$. It follows that $\mathfrak{s}\textup{-def}(\mathcal{M})/[\textup{s-epi}(\mathcal{M})]\cong \textup{mod-}\mathcal{M}/[\mathcal{P}]$ by Lemma \ref{lem2.1}(1).

 The  functor $F$ induces a full and dense functor $\widehat{F}: \mathfrak{s}\textup{-def}(\mathcal{M})\rightarrow \textup{Mor}(\mathcal{M}/[\mathcal{P}])/\mathcal{R}'$.
 By  Lemma \ref{lem3.0}(2), we have $\mathfrak{s}\textup{-def}(\mathcal{M})/[\textup{sp-epi}(\mathcal{C})]\cong \textup{Mor}(\mathcal{M}/[\mathcal{P}])/\mathcal{R}'$. It follows that $\mathfrak{s}\textup{-def}(\mathcal{M})/[\textup{sp-epi}(\mathcal{M})]\cong (\textup{mod-}(\mathcal{M}/[\mathcal{P}])^{\textup{op}})^{\textup{op}}$ by Lemma \ref{lem2.1}(2).
\end{proof}


\section{Application to category of $\mathfrak{s}$-triangles}

In this section, we will investigate the first application of Theorem \ref{3.1} in the case when $\mathcal{M}=\mathcal{C}$.

 We denote by $\mathfrak{s}\textup{-tri}(\mathcal{C})$ the category of $\mathfrak{s}$-triangles in $\mathcal{C}$, where the objects are the $\mathfrak{s}$-triangles $X_\bullet=(\xymatrix{X_1\ar[r]^{f_1}&X_2\ar[r]^{f_2} & X_3\ar@{-->}[r]^{\delta}&})$ and the morphisms from $X_\bullet$ to $Y_\bullet$ are the triples $\varphi_\bullet=(\varphi_1,\varphi_2,\varphi_3)$ such that the following diagram is commutative
$$\xymatrix{
X_1 \ar[r]^{f_1} \ar[d]^{\varphi_1} & X_2 \ar[r]^{f_2}\ar[d]^{\varphi_2} & X_3 \ar@{-->}[r]^{\delta}\ar[d]^{\varphi_3} & \\
Y_1\ar[r]^{g_1} & Y_2\ar[r]^{g_2} & Y_3\ar@{-->}[r]^{\delta'} &
}$$
and $\varphi_{1\ast}\delta=\varphi_3^*\delta'$.
 Let $X_\bullet$ and $Y_\bullet$ be two $\mathfrak{s}$-triangles, we denote by $\mathcal{R}_2(X_\bullet,Y_\bullet)$ (resp. $\mathcal{R}'_1(X_\bullet,Y_\bullet)$)
  the class of morphisms $\varphi_\bullet: X_\bullet\rightarrow Y_\bullet$ such that $\varphi_3$ factors through $g_2$ 
 (resp. $\varphi_1$ factors through $f_1$).  It is easy to see that $\mathcal{R}_2$ and $\mathcal{R}'_1$ are ideals of $\mathfrak{s}\textup{-tri}(\mathcal{C})$.

\begin{thm}\label{thm3.2}
Let $\mathcal{C}$ be an extriangulated category.

\textup{(1)} If $\mathcal{C}$ has enough projectives $\mathcal{P}$, then $\mathfrak{s}\textup{-tri}(\mathcal{C})/\mathcal{R}_2\cong \textup{mod-}\mathcal{C}/[\mathcal{P}]$.

\textup{(2)} If $\mathcal{C}$ has enough injectives $\mathcal{I}$, then $\mathfrak{s}\textup{-tri}(\mathcal{C})/\mathcal{R}_2\cong (\textup{mod-}(\mathcal{C}/[\mathcal{I}])^{\textup{op}})^{\textup{op}}$.
\end{thm}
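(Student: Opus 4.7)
The plan is to reduce the theorem to Theorem \ref{3.1} (applied with $\mathcal{M}=\mathcal{C}$) through two ``forgetful'' functors that pick out the deflation, respectively the inflation, part of an $\mathfrak{s}$-triangle. For (1) I define $\Phi \colon \mathfrak{s}\textup{-tri}(\mathcal{C})\to\mathfrak{s}\textup{-def}(\mathcal{C})$ by sending $X_\bullet=(X_1\to X_2\xrightarrow{f_2}X_3\dashrightarrow)$ to $(X_2\xrightarrow{f_2}X_3)$ and $\varphi_\bullet\mapsto (\varphi_2,\varphi_3)$; the aim is to show that $\Phi$ induces an equivalence $\widetilde{\Phi} \colon \mathfrak{s}\textup{-tri}(\mathcal{C})/\mathcal{R}_2 \xrightarrow{\sim} \mathfrak{s}\textup{-def}(\mathcal{C})/[\textup{s-epi}(\mathcal{C})]$, and then compose with Theorem \ref{3.1}(1). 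For (2) I dually define $\Psi \colon \mathfrak{s}\textup{-tri}(\mathcal{C})\to\mathfrak{s}\textup{-inf}(\mathcal{C})$, $X_\bullet\mapsto(X_1\xrightarrow{f_1}X_2)$ and $\varphi_\bullet\mapsto(\varphi_1,\varphi_2)$, and show that the induced $\widetilde{\Psi} \colon \mathfrak{s}\textup{-tri}(\mathcal{C})/\mathcal{R}_2 \xrightarrow{\sim} \mathfrak{s}\textup{-inf}(\mathcal{C})/[\textup{s-mono}(\mathcal{C})]$ is an equivalence, then apply Theorem \ref{3.1}(2).

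Density of $\Phi$ is immediate: every $\mathfrak{s}$-deflation is by definition the right-hand map of some $\mathfrak{s}$-triangle. Fullness follows from (ET3)$^{\textup{op}}$: given $(\varphi_2,\varphi_3)$ with $g_2\varphi_2=\varphi_3 f_2$, this axiom produces $\varphi_1 \colon X_1\to Y_1$ completing the diagram to a morphism of $\mathfrak{s}$-triangles, which by definition satisfies the compatibility $\varphi_{1*}\delta=\varphi_3^*\delta'$ required for $\mathfrak{s}\textup{-tri}(\mathcal{C})$. The corresponding facts for $\Psi$ come from the definition of $\mathfrak{s}$-inflation and axiom (ET3).

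The decisive step is to identify the kernel of each induced functor with $\mathcal{R}_2$. For $\widetilde{\Phi}$: by Lemma \ref{lem2.1}(1) (equivalently, the equivalence (b)$\Leftrightarrow$(c) of Lemma \ref{lem3.0}(1), which needs no projectives), the morphism $(\varphi_2,\varphi_3)$ lies in $[\textup{s-epi}(\mathcal{C})]$ iff $\varphi_3$ factors through $g_2$, i.e., iff $\varphi_\bullet\in\mathcal{R}_2$. For $\widetilde{\Psi}$: by Lemma \ref{lem2.1}(2), $(\varphi_1,\varphi_2)\in[\textup{s-mono}(\mathcal{C})]$ iff $\varphi_1$ factors through $f_1$; Lemma \ref{2.1} applied to the morphism of $\mathfrak{s}$-triangles $\varphi_\bullet$ then shows that $\varphi_1$ factoring through $f_1$ is equivalent to $\varphi_3$ factoring through $g_2$, that is, to $\varphi_\bullet\in\mathcal{R}_2$.

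The main technical obstacle lies in this second identification: the ideal $\mathcal{R}_2$ is phrased purely in terms of the third component of a morphism of $\mathfrak{s}$-triangles, while the natural s-mono condition for $\mathfrak{s}$-inflations is a condition on the first component. Lemma \ref{2.1}, the extriangulated 3-for-1 characterization via the vanishing $\varphi_{1*}\delta=\varphi_3^*\delta'=0$ of the pushed/pulled $\mathbb{E}$-extensions, is exactly the bridge that makes these two conditions coincide. Once this matching is in place, $\widetilde{\Phi}$ and $\widetilde{\Psi}$ are full, dense, and faithful, hence equivalences, and invoking Theorem \ref{3.1} completes the proof.
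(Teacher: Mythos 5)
Your proposal is correct and follows essentially the same route as the paper: the paper likewise reduces (1) to Theorem \ref{3.1}(1) via the deflation-forgetting functor, using Lemma \ref{lem3.0} to match $[\textup{s-epi}(\mathcal{C})]$ with $\mathcal{R}_2$, and handles (2) by observing $\mathcal{R}_2=\mathcal{R}'_1$ via Lemma \ref{2.1} before passing to $\mathfrak{s}\textup{-inf}(\mathcal{C})/[\textup{s-mono}(\mathcal{C})]$ and applying Theorem \ref{3.1}(2). Your write-up just makes explicit the density and fullness checks (via (ET3) and (ET3)$^{\textup{op}}$) that the paper leaves implicit.
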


\begin{proof}
(1) We have $\mathfrak{s}\textup{-tri}(\mathcal{C})/\mathcal{R}_2\cong \mathfrak{s}\textup{-def}(\mathcal{C})/[\textup{s-epi}(\mathcal{C})]$ by Lemma \ref{lem3.0}. Thus $\mathfrak{s}\textup{-tri}(\mathcal{C})/\mathcal{R}_2\cong \textup{mod-}\mathcal{C}/[\mathcal{P}]$ follows from Theorem \ref{3.1}(1).

(2) We note that $\mathcal{R}_2=\mathcal{R}_1'$ by Lemma \ref{2.1}. Thus $\mathfrak{s}\textup{-tri}(\mathcal{C})/\mathcal{R}_2=\mathfrak{s}\textup{-tri}(\mathcal{C})/\mathcal{R}'_1
\cong\mathfrak{s}\textup{-inf}(\mathcal{C})/[\textup{s-mono}(\mathcal{C})]\cong (\textup{mod-}(\mathcal{C}/[\mathcal{I}])^{\textup{op}})^{\textup{op}}$,
where the last equivalence follows from Theorem \ref{3.1}(2).
\end{proof}

\begin{lem}  Let $\mathcal{C}$ be an extriangulated category.
 Assume that the following
$$\xymatrix{X_\bullet\ar[d]^{\varphi_\bullet}  & X_1 \ar[r]^{f_1}\ar[d]^{\varphi_1} & X_2 \ar[r]^{f_2}\ar[d]^{\varphi_2} & X_3 \ar@{-->}[r]^{\delta}\ar[d]^{\varphi_3} & \\
Y_\bullet  & Y_1 \ar[r]^{g_1} & Y_2 \ar[r]^{g_2} & Y_3 \ar@{-->}[r]^{\delta'} & \\
}$$
is a morphism of $\mathfrak{s}$-triangles. Then

\textup{(1)} The following statements are equivalent.

\ \ \ \textup{(a)} $\underline{\varphi_\bullet}=0$ in $\mathfrak{s}\textup{-tri}(\mathcal{C})/\mathcal{R}_2$.

\ \ \ \textup{(b)} $\varphi_1$ factors through $f_1$.

\ \ \ \textup{(c)} $\varphi_{3}$ factors through $g_2$.

\textup{(2)} The following statements are equivalent.

\ \ \ \textup{(a)} $\underline{\varphi_\bullet}$ is a monomorphism in $\mathfrak{s}\textup{-tri}(\mathcal{C})/\mathcal{R}_2$.

\ \ \ \textup{(b)} $\left(
                                                                         \begin{smallmatrix}
                                                                           f_1 \\
                                                                           \varphi_1 \\
                                                                         \end{smallmatrix}
                                                                       \right)
:X_1\rightarrow X_2\oplus Y_1$ is a section.
\end{lem}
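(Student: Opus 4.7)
\emph{Proof plan.} Part (1) should follow essentially from Lemma \ref{2.1}. Since $\varphi_\bullet$ is a morphism of $\mathfrak{s}$-triangles, one has $\varphi_{1*}\delta=\varphi_3^*\delta'$, so Lemma \ref{2.1} identifies both factorization conditions (b) and (c) with the vanishing of this common extension. Condition (a) coincides with (c) by the very definition of the ideal $\mathcal{R}_2$, so all three are equivalent.

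For part (2), the implication (b)$\Rightarrow$(a) is direct. Suppose $(a,b):X_2\oplus Y_1\to X_1$ satisfies $af_1+b\varphi_1=1_{X_1}$, and let $\underline{\psi_\bullet}:Z_\bullet\to X_\bullet$ be any morphism in the quotient with $\underline{\varphi_\bullet\psi_\bullet}=0$. Writing the first map of $Z_\bullet$ as $h_1$, part (1) applied to the composite $\varphi_\bullet\psi_\bullet$ yields $\varphi_1\psi_1=kh_1$ for some $k$, and then
\[
\psi_1=af_1\psi_1+b\varphi_1\psi_1=a\psi_2 h_1+bkh_1=(a\psi_2+bk)h_1,
\]
so $\psi_1$ factors through $h_1$ and hence $\underline{\psi_\bullet}=0$ by part (1).

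The substantive direction is (a)$\Rightarrow$(b). The plan is to construct a test $\mathfrak{s}$-triangle $Z_\bullet$ whose first map is $\left(\begin{smallmatrix} f_1 \\ \varphi_1 \end{smallmatrix}\right)$ itself. Since $f_1$ is an $\mathfrak{s}$-inflation, applying Lemma \ref{2.2} to $X_\bullet$ along $\varphi_1:X_1\to Y_1$ exhibits $\left(\begin{smallmatrix} \varphi_1 \\ f_1 \end{smallmatrix}\right):X_1\to Y_1\oplus X_2$ as an $\mathfrak{s}$-inflation, and after the obvious swap of direct summands we obtain an $\mathfrak{s}$-triangle $Z_\bullet = (X_1\to X_2\oplus Y_1\to W)$ whose first map is $\left(\begin{smallmatrix} f_1 \\ \varphi_1 \end{smallmatrix}\right)$. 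The partial data $(1_{X_1},(1_{X_2},0))$ commutes on the first square, so (ET3) supplies a third component $\psi_3:W\to X_3$ producing a morphism of $\mathfrak{s}$-triangles $\psi_\bullet:Z_\bullet\to X_\bullet$. The composite $\varphi_\bullet\psi_\bullet$ has first component $\varphi_1$, which factors through $\left(\begin{smallmatrix} f_1 \\ \varphi_1 \end{smallmatrix}\right)$ via the projection $(0,1_{Y_1}):X_2\oplus Y_1\to Y_1$, so by part (1) we have $\underline{\varphi_\bullet\psi_\bullet}=0$. The monomorphism hypothesis on $\underline{\varphi_\bullet}$ then forces $\underline{\psi_\bullet}=0$, and part (1) says $\psi_1=1_{X_1}$ factors through $\left(\begin{smallmatrix} f_1 \\ \varphi_1 \end{smallmatrix}\right)$; this factorization is precisely a retraction, proving that $\left(\begin{smallmatrix} f_1 \\ \varphi_1 \end{smallmatrix}\right)$ is a section.

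The main obstacle I anticipate is finding the right test triangle in (a)$\Rightarrow$(b): recognizing (via Lemma \ref{2.2}) that $\left(\begin{smallmatrix} f_1 \\ \varphi_1 \end{smallmatrix}\right)$ is itself an $\mathfrak{s}$-inflation is what allows the factorization conclusion of part (1) to align with the desired section property. A convenient feature of using (ET3) to produce $\psi_3$ is that we never need an explicit formula for it, so we avoid any sign or extension-class matching with the $\mathbb{E}$-class of $Z_\bullet$.
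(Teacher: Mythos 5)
Your proof is correct and follows the route the paper intends: part (1) is exactly the paper's one-line appeal to Lemma \ref{2.1} together with the definition of $\mathcal{R}_2$, and for part (2) the paper only cites the analogue in \cite{[Lin]}, whose test-object strategy you reproduce faithfully, with Lemma \ref{2.2} correctly standing in for the pushout construction to exhibit $\left(\begin{smallmatrix} f_1 \\ \varphi_1 \end{smallmatrix}\right)$ as an $\mathfrak{s}$-inflation. In effect you supply the details the paper delegates to a reference, and all the steps (the retraction computation for (b)$\Rightarrow$(a), the use of (ET3) to complete $\psi_\bullet$, and reading the factorization of $1_{X_1}$ as a retraction) check out.
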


\begin{proof}
(1) It follows from Lemma \ref{2.1}.

(2) The proof is similar to \cite[Lemma 4.7]{[Lin]}.
\end{proof}

If $\mathcal{C}$ has enough projectives $\mathcal{P}$, then $\mathfrak{s}\textup{-tri}(\mathcal{C})/\mathcal{R}_2\cong \textup{mod-}\mathcal{C}/[\mathcal{P}]$ is abelian by Theorem \ref{thm3.2}. The following result implies that $\mathfrak{s}\textup{-tri}(\mathcal{C})/\mathcal{R}_2$ is always abelian for general case.

\begin{prop}\label{prop1}
Let $\mathcal{C}
$ be an extriangulated category. Then $\mathfrak{s}\textup{-tri}(\mathcal{C})/\mathcal{R}_2$ is an abelian category. 
\end{prop}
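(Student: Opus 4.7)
The plan is to verify directly the abelian category axioms for $\mathcal{A}:=\mathfrak{s}\textup{-tri}(\mathcal{C})/\mathcal{R}_2$, adapting Lin's argument in \cite[Theorem 4.8]{[Lin]} (the exact case) to the extriangulated setting. In that argument, snake-lemma type constructions produce kernels and cokernels; here their role will be played by the octahedral axioms (ET4) and (ET4)$^{\textup{op}}$, together with Lemma \ref{2.2} and its dual, which substitute for pushouts and pullbacks. Additivity of $\mathcal{A}$ is immediate: $\mathfrak{s}\textup{-tri}(\mathcal{C})$ is additive because $\mathfrak{s}$ is additive by (ET2), and $\mathcal{R}_2$ is an ideal, so the quotient remains additive.

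To construct the cokernel of a morphism $\varphi_\bullet\colon X_\bullet\to Y_\bullet$, first apply Lemma \ref{2.2} to $\varphi_1\colon X_1\to Y_1$ and the $\mathfrak{s}$-triangle $X_\bullet$ to produce an auxiliary $\mathfrak{s}$-triangle $Y_1\xrightarrow{d} E\xrightarrow{e} X_3\dashrightarrow (\varphi_1)_{\ast}\delta$, yielding a factorization $X_\bullet\to (Y_1\to E\to X_3)\to Y_\bullet$ of $\varphi_\bullet$ (with the mediating map $E\to Y_2$ obtained from (ET3) using $(\varphi_1)_\ast\delta=\varphi_3^\ast\delta'$). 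Now apply (ET4) to combine the $\mathfrak{s}$-inflations $Y_1\xrightarrow{d} E$ and $Y_1\xrightarrow{g_1} Y_2$ to extract a new $\mathfrak{s}$-triangle $Z_\bullet$ together with a morphism $\psi_\bullet\colon Y_\bullet\to Z_\bullet$ which is declared to be $\textup{coker}\,\varphi_\bullet$. The universal property is then verified with the preceding lemma: a morphism $Y_\bullet\to W_\bullet$ whose precomposition with $\varphi_\bullet$ lies in $\mathcal{R}_2$ corresponds precisely to the first component factoring through $f_1$, and such a factorization induces, through the triangle $(Y_1\to E\to X_3)$ and (ET3), the required unique morphism $Z_\bullet\to W_\bullet$ in $\mathcal{A}$. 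Kernels are constructed dually via (ET4)$^{\textup{op}}$.

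It remains to show that every monomorphism of $\mathcal{A}$ is a kernel and every epimorphism is a cokernel. For monomorphisms I would use the characterization from the preceding lemma: $\underline{\varphi_\bullet}$ is monic iff $\bigl(\begin{smallmatrix}f_1\\ \varphi_1\end{smallmatrix}\bigr)\colon X_1\to X_2\oplus Y_1$ is a section. From an explicit retraction one reads off enough splitting data to identify $\varphi_\bullet$ with the kernel of the cokernel $\psi_\bullet$ constructed above (up to $\mathcal{R}_2$); the epimorphism case is dual. The main obstacle will be this last step: one has to track several simultaneous octahedral diagrams and, crucially, verify the universal property in $\mathcal{A}$ modulo $\mathcal{R}_2$ rather than on the nose in $\mathfrak{s}\textup{-tri}(\mathcal{C})$. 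Throughout, the characterization in the preceding lemma (zero-morphism iff first component factors through $f_1$ iff third component factors through $g_2$) is the essential tool for converting octahedral data into quotient-category identities.
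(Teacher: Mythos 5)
Your plan follows the paper's own proof essentially verbatim: the paper likewise adapts \cite[Theorem 4.8]{[Lin]}, using Lemma \ref{2.2} and its dual (these, rather than (ET4) applied to a span of inflations out of $Y_1$, are the actual tools) to build from $\varphi_{1\ast}\delta=\varphi_3^*\delta'$ the intermediate $\mathfrak{s}$-triangle $I(\varphi_\bullet)=(Y_1\to Z\to X_3)$ through which $\underline{\varphi_\bullet}$ factors, with kernel realized by $X_1\to X_2\oplus Y_1\to Z$ and cokernel by $Z\to X_3\oplus Y_2\to Y_3$. For the step you flag as the main obstacle, the paper sidesteps the monomorphism characterization entirely and instead verifies $\textup{Coker}(\textup{Ker}(\underline{\varphi_\bullet}))\cong I(\varphi_\bullet)\cong \textup{Ker}(\textup{Coker}(\underline{\varphi_\bullet}))$ for an arbitrary morphism, which is the cleaner way to finish.
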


\begin{proof}
The proof is an adaption of \cite[Theorem 4.8]{[Lin]}. Assume that the following
$$\xymatrix{X_\bullet\ar[d]^{\varphi_\bullet}  & X_1 \ar[r]^{f_1}\ar[d]^{\varphi_1} & X_2 \ar[r]^{f_2}\ar[d]^{\varphi_2} & X_3 \ar@{-->}[r]^{\delta}\ar[d]^{\varphi_3} & \\
Y_\bullet  & Y_1 \ar[r]^{g_1} & Y_2 \ar[r]^{g_2} & Y_3 \ar@{-->}[r]^{\delta'} & \\
}$$
is a morphism of $\mathfrak{s}$-triangles. Thus $\varphi_{1\ast}\delta=\varphi_3^*\delta'$ by definition.  By Lemma \ref{2.2} and its dual, we have the following morphisms of $\mathfrak{s}$-triangles.
$$\xymatrix{
K(\varphi_\bullet)\ar[d]^{k_\bullet} & X_1 \ar[r]^{\left(
                                                                                        \begin{smallmatrix}
                                                                                          f_1  \\
                                                                                           \varphi_1\\
                                                                                        \end{smallmatrix}
                                                                                      \right)}\ar@{=}[d] & X_2\oplus Y_1 \ar[r]^{\left(
                                                                                        \begin{smallmatrix}
                                                                                          a_1 & -h_1 \\
                                                                                        \end{smallmatrix}
                                                                                      \right)}\ar[d]^{(1,0)
 } & Z \ar@{-->}[r]^{h_2^*\delta}\ar[d]^{h_2} & \\
X_\bullet\ar[d]^{\pi_\bullet} & X_1 \ar[r]^{f_1}\ar[d]^{\varphi_1} & X_2 \ar[r]^{f_2}\ar[d]^{a_1} & X_3 \ar@{-->}[r]^{\delta}\ar@{=}[d] &  \\
I(\varphi_\bullet)\ar[d]^{i_\bullet} &  Y_1 \ar[r]^{h_1}\ar@{=}[d] & Z \ar[r]^{h_2}\ar[d]^{a_2} & X_3 \ar@{-->}[r]^{\varphi_{1*}\delta}\ar[d]^{\varphi_3} & \\
Y_\bullet\ar[d]^{c_\bullet} &  Y_1 \ar[r]^{g_1}\ar[d]^{h_1} & Y_2 \ar[r]^{g_2}\ar[d]^{\left(
                                                                                                                            \begin{smallmatrix}
                                                                                                                              0 \\
                                                                                                                              1 \\
                                                                                                                            \end{smallmatrix}
                                                                                                                          \right)} & Y_3 \ar@{-->}[r]^{\delta'}\ar@{=}[d] & \\
C(\varphi_\bullet) &   Z \ar[r]^{\left(
                                                                                        \begin{smallmatrix}
                                                                                          h_2  \\
                                                                                          a_2\\
                                                                                        \end{smallmatrix}
                                                                                      \right)} & X_3\oplus Y_2 \ar[r]^{(-\varphi_3, g_2)} & Y_3 \ar@{-->}[r]^{h_{1*}\delta'} & \\
}$$
Moreover, we have $\underline{\varphi_\bullet}=\underline{i_\bullet\pi_\bullet}$ in $\mathfrak{s}\textup{-tri}(\mathcal{C})/\mathcal{R}_2$.
It is routine to check that $k_{\bullet}:K(\varphi_\bullet)\rightarrow X_\bullet$ is a kernel of $\underline{\varphi_\bullet}$, $c_\bullet: Y_\bullet \rightarrow C(\varphi_\bullet)$ is a cokernel of $\underline{\varphi_\bullet}$ and $$\textup{Coker}(\textup{Ker}(\underline{\varphi_\bullet}))\cong I(\varphi_\bullet)\cong \textup{Ker}(\textup{Coker}(\underline{\varphi_\bullet})).$$
\end{proof}

\begin{prop}\label{prop3.2}
Let  $\mathcal{C}$ be an extriangulated category. Then an $\mathfrak{s}$-triangle $P_X=(\xymatrix{\Omega X\ar[r]^{f_1}&P\ar[r]^{f_2} & X\ar@{-->}[r]^{\rho}&}$) with $P\in\mathcal{P}$ is a projective object in $\mathfrak{s}\textup{-tri}(\mathcal{C})/\mathcal{R}_2$. Moreover, if $\mathcal{C}$ has enough projectives, then each projective object in $\mathfrak{s}\textup{-tri}(\mathcal{C})/\mathcal{R}_2$ is of the form $P_X$.
\end{prop}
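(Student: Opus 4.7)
My plan is to handle the two claims by different means: a direct Yoneda-type computation for the projectivity of $P_X$ (which avoids any global assumption on $\mathcal{C}$), and a splitting argument combined with the equivalence of Theorem \ref{thm3.2} for the converse.

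For the projectivity of $P_X$, I would first establish a natural isomorphism
\[
\textup{Hom}_{\mathfrak{s}\textup{-tri}(\mathcal{C})/\mathcal{R}_2}(P_X,\, Y_\bullet) \;\cong\; \mathcal{C}(X, Y_3)\big/\textup{Im}\bigl(\mathcal{C}(X, g_2)\bigr),
\]
for any $\mathfrak{s}$-triangle $Y_\bullet = (Y_1 \xrightarrow{g_1} Y_2 \xrightarrow{g_2} Y_3,\, \delta')$, sending $\underline{\mu_\bullet}$ to the class of $\mu_3$. Well-definedness and injectivity are immediate from the preceding lemma characterizing $\mathcal{R}_2(P_X, Y_\bullet)$ as ``$\mu_3$ factors through $g_2$''. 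For surjectivity, given $\mu_3: X \to Y_3$, projectivity of $P$ lifts $\mu_3 f_2 : P \to Y_3$ through the deflation $g_2$ to some $\mu_2: P \to Y_2$; then (ET3)$^{\textup{op}}$ supplies $\mu_1: \Omega X \to Y_1$ turning $(\mu_1, \mu_2, \mu_3)$ into a morphism of $\mathfrak{s}$-triangles (with the extension-class identity $\mu_{1*}\rho = \mu_3^* \delta'$ holding automatically by the very definition of ``morphism of $\mathfrak{s}$-triangles'' in (ET3)$^{\textup{op}}$).

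With this identification, showing $P_X$ is projective reduces to showing that $Y_\bullet \mapsto \mathcal{C}(X, Y_3)/\textup{Im}(\mathcal{C}(X, g_2))$ sends epimorphisms to surjections. Using the cokernel recipe from the proof of Proposition \ref{prop1}, an epimorphism $\underline{\psi_\bullet}: W_\bullet \to Z_\bullet$ is equivalent to the vanishing in the quotient of the $\mathfrak{s}$-triangle $Z' \to W_3 \oplus Z_2 \xrightarrow{(-\psi_3,\, g_2^Z)} Z_3$, which by the lemma characterizing $\mathcal{R}_2$ amounts to a relation $\textup{id}_{Z_3} = -\psi_3 \alpha + g_2^Z \beta$ for some $\alpha: Z_3 \to W_3$, $\beta: Z_3 \to Z_2$. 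Composing this with an arbitrary $\mu_3: X \to Z_3$ yields $\mu_3 \equiv \psi_3(-\alpha \mu_3) \pmod{\textup{Im}(\mathcal{C}(X, g_2^Z))}$, which is the required surjectivity.

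For the converse, assume $\mathcal{C}$ has enough projectives and let $W_\bullet$ be projective in $\mathfrak{s}\textup{-tri}(\mathcal{C})/\mathcal{R}_2$. Set $X := W_3$, choose $P_X$ by enough projectives, and apply the lifting above (with $\mu_3 = \textup{id}_{W_3}$) to obtain $\mu_\bullet: P_X \to W_\bullet$. Its cokernel, computed via Proposition \ref{prop1}, has deflation the manifestly split map $(-\textup{id}_{W_3},\, g_2^W): W_3 \oplus W_2 \to W_3$, so $\underline{\mu_\bullet}$ is an epimorphism; projectivity of $W_\bullet$ then splits it, exhibiting $W_\bullet$ as a direct summand of $P_X$ in the quotient. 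Transferring via the equivalence $F$ of Theorem \ref{thm3.2}, $F(W_\bullet)$ is a summand of the representable $(\mathcal{C}/[\mathcal{P}])(-, X)$, hence by Yoneda is cut out by an idempotent of $X$ in $\mathcal{C}/[\mathcal{P}]$; splitting that idempotent realizes $W_\bullet \cong P_{X'}$ for a summand $X'$ of $X$. The main obstacle I anticipate is the surjectivity step of the Hom identification above, since one must simultaneously arrange the commutativity of the rungs and the extension-class equality $\mu_{1*}\rho = \mu_3^*\delta'$; the idempotent-splitting at the end is standard once $F$ is in hand, though it is the point at which idempotent-completeness of $\mathcal{C}$ is implicitly invoked.
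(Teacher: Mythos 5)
The paper gives no proof of this proposition (it only defers to an adaptation of \cite[Proposition 4.11]{[Lin]}), so your argument cannot be compared line by line with the paper's; judged on its own it is essentially correct and is the natural argument. The first half is complete: the identification $\textup{Hom}_{\mathfrak{s}\textup{-tri}(\mathcal{C})/\mathcal{R}_2}(P_X,Y_\bullet)\cong\mathcal{C}(X,Y_3)/\textup{Im}\,\mathcal{C}(X,g_2)$ is correct (injectivity and well-definedness are exactly the definition of $\mathcal{R}_2$; surjectivity uses only the projectivity of $P$ in $\mathcal{C}$ together with (ET3)$^{\textup{op}}$, and the compatibility $\mu_{1\ast}\rho=\mu_3^{\ast}\delta'$ is automatic from the definition of a morphism of $\mathfrak{s}$-triangles), and your reduction of ``$\underline{\psi_\bullet}$ is an epimorphism'' to ``$\textup{id}_{Z_3}$ factors through $(-\psi_3,g_2^Z)$'' via the cokernel constructed in Proposition \ref{prop1} is valid. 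A pleasant feature of this route is that it proves projectivity of $P_X$ without assuming $\mathcal{C}$ has enough projectives, exactly matching the hypotheses of the statement.

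For the converse, your splitting argument is also the right one, and the caveat you raise at the end is a genuine issue with the statement rather than a defect of your proof: from ``$W_\bullet$ is a direct summand of $P_{W_3}$'' one only obtains that $F(W_\bullet)$ is cut out by an idempotent $\underline{e}$ of $W_3$ in $\mathcal{C}/[\mathcal{P}]$, and without splitting $\underline{e}$ one cannot produce the object $X'$. This is not a removable technicality. If $\mathcal{C}$ is a triangulated category with a non-split idempotent $e\colon X\to X$, then $\mathcal{P}=0$, the morphism $1-e$ is a deflation, and completing it to a triangle $W_1\to X\xrightarrow{1-e}X$ gives an object $W_\bullet$ of $\mathfrak{s}\textup{-tri}(\mathcal{C})$ with $F(W_\bullet)\cong\textup{Im}(e_\ast)$, a projective but non-representable functor; hence $W_\bullet$ is projective in $\mathfrak{s}\textup{-tri}(\mathcal{C})/\mathcal{R}_2$ yet not isomorphic to any $P_{X'}$. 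So the ``moreover'' direction requires $\mathcal{C}/[\mathcal{P}]$ (for instance $\mathcal{C}$ itself) to be idempotent complete, a hypothesis the proposition does not state; once it is added, your proof is complete.
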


\begin{proof}
The proof is an adaption of \cite[Proposition 4.11]{[Lin]}. We omit it.
\end{proof}

\begin{defn} (\cite{[INP],[ZZ]})
An $\mathfrak{s}$-triangle $\xymatrix{X_1\ar[r]^{f_1}& X_2\ar[r]^{f_2} & X_3\ar@{-->}[r]^{\delta}&}$ is called {\em Auslander-Reiten $\mathfrak{s}$-triangle} if the following holds:

(1) $\delta\in\mathbb{E}(C,A)$ is non-split.

(2)  If $g: X_1\rightarrow Y$  is not a section, then $g$ factors through $f_1$.

(3)  If $h:Z\rightarrow X_3$   is not a retraction, then $h$ factors through $f_2$.
\end{defn}

\begin{prop}\label{prop3.2.0}
Let $\mathcal{C}$ be a Krull-Smidt extriangulated category. Assume that $X_\bullet: \xymatrix{X_{1}\ar[r]^{f_1}& X_2\ar[r]^{f_2} & X_3\ar@{-->}[r]^{\delta}&}$ is a non-split $\mathfrak{s}$-triangle such that $X_1$ and $X_2$ are indecomposable. Then $X_\bullet$ is a simple object in $\mathfrak{s}\textup{-tri}(\mathcal{C})/\mathcal{R}_2$ if and only if $X_\bullet$ is an Auslander-Reiten $\mathfrak{s}$-triangle in $\mathcal{C}$.
\end{prop}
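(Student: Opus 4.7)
The plan is to characterize simplicity of $X_\bullet$ in the abelian category $\mathfrak{s}\textup{-tri}(\mathcal{C})/\mathcal{R}_2$ via the zero/monomorphism criteria of the (unnumbered) lemma preceding Proposition \ref{prop1}, together with the dual epimorphism criterion obtained by reversing the roles of $\mathcal{R}_2$ and $\mathcal{R}'_1$ (which coincide on ideals, cf.\ the proof of Theorem \ref{thm3.2}), and then match the resulting conditions with the three Auslander--Reiten axioms. As a warm-up note that $X_\bullet\cong 0$ in the quotient iff $1_{X_1}$ factors through $f_1$ iff $\delta$ splits, so nonvanishing in the quotient already corresponds to axiom (1). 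The real content lies in matching axioms (2) and (3) with the absence of non-trivial subobjects and quotients.

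For $(\Leftarrow)$, assume $X_\bullet$ is Auslander--Reiten and let $\psi_\bullet:Z_\bullet\to X_\bullet$ (with $Z_\bullet=(Z_1\xrightarrow{g_1}Z_2\xrightarrow{g_2}Z_3,\delta_Z)$) be a nonzero monomorphism in the quotient; I will produce a right inverse $\phi_\bullet$, so that $\psi_\bullet$ is split epi and, being monic, an iso. Nonvanishing forces $\psi_3$ not to factor through $f_2$, so axiom (3) yields $s:X_3\to Z_3$ with $\psi_3 s=1_{X_3}$. The monomorphism criterion produces $\alpha:Z_2\to Z_1$ and $\beta:X_1\to Z_1$ with $\alpha g_1+\beta\psi_1=1_{Z_1}$. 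From $\psi_{1\ast}\delta_Z=\psi_3^\ast\delta$ and $\psi_3 s=1$ one obtains $\delta=\psi_{1\ast}s^\ast\delta_Z$; combined with $(g_1)_\ast\delta_Z=0$ (from the long exact sequence of Lemma \ref{lem2.0} applied to $Z_\bullet$), a short manipulation gives $\beta_\ast\delta=s^\ast\delta_Z$. Hence $(\beta,s)$ is a morphism of $\mathbb{E}$-extensions $\delta\to\delta_Z$, which (ET3) lifts to some $\phi_\bullet:X_\bullet\to Z_\bullet$ with $\phi_3=s$; then $(\psi_\bullet\phi_\bullet)_3=1_{X_3}$ factors trivially through $f_2$, so $\psi_\bullet\phi_\bullet=1_{X_\bullet}$ in the quotient.

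For $(\Rightarrow)$, assume $X_\bullet$ is simple. Given $g:X_1\to Y$ not a section, Lemma \ref{2.2} realizes $g_\ast\delta$ as an $\mathfrak{s}$-triangle $Y_\bullet=(Y\to E\to X_3)$ together with a morphism $\varphi_\bullet:X_\bullet\to Y_\bullet$ having $\varphi_1=g$ and $\varphi_3=1_{X_3}$; the dual of the monomorphism criterion makes $\varphi_\bullet$ epi in the quotient (the retraction being $(1,0)^{\top}$). Since every nonzero morphism out of a simple object is monic, $\varphi_\bullet$ is either zero---in which case $g$ factors through $f_1$, yielding axiom (2)---or an iso. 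In the iso case $\varphi_\bullet$ is also monic, so $(f_1,g)^{\top}$ is a section and $af_1+bg=1_{X_1}$ for some $a,b$; by Krull--Schmidt and the indecomposability of $X_1$, $\operatorname{End}(X_1)$ is local, forcing $af_1$ or $bg$ to be a unit, contradicting either the non-splitness of $\delta$ or $g$ being a non-section. Axiom (3) is obtained dually using the dual of Lemma \ref{2.2} and the indecomposability of $X_3$.

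The main obstacle is the extension-level identity $\beta_\ast\delta=s^\ast\delta_Z$ in the $(\Leftarrow)$ direction: it is what allows the section $s$ from axiom (3) to be promoted to a full morphism $\phi_\bullet:X_\bullet\to Z_\bullet$ of $\mathfrak{s}$-triangles, and it weaves together three inputs---the section $s$ from axiom (3), the splitting relation $\alpha g_1+\beta\psi_1=1_{Z_1}$ from the monic hypothesis, and the vanishing $(g_1)_\ast\delta_Z=0$ supplied by Lemma \ref{lem2.0}---before (ET3) can be applied.
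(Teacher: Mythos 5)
Your argument is essentially the adaptation of \cite[Theorem 4.20(a)]{[Lin]} that the paper omits, and the substance checks out: the identification of nonvanishing in $\mathfrak{s}\textup{-tri}(\mathcal{C})/\mathcal{R}_2$ with non-splitness via Lemma \ref{2.1}, the computation $\beta_\ast\delta=(1_{Z_1}-\alpha g_1)_\ast s^\ast\delta_Z=s^\ast\delta_Z$ using $(g_1)_\ast\delta_Z=0$, the promotion of $(\beta,s)$ to a morphism of $\mathfrak{s}$-triangles (this is really the realization axiom (ET2) rather than (ET3), a harmless mislabel), and the Krull--Schmidt argument splitting $af_1+bg=1_{X_1}$ in the local ring $\operatorname{End}(X_1)$ are all correct. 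The epi criterion you invoke, though not stated in the paper, is legitimately read off from the explicit cokernel $C(\varphi_\bullet)=(Z\to X_3\oplus Y_2\xrightarrow{(-\varphi_3,g_2)}Y_3)$ constructed in the proof of Proposition \ref{prop1}. One sentence is garbled: what you need is that $(\psi_\bullet\phi_\bullet-1_{X_\bullet})_3=0$ factors through $f_2$, not that $1_{X_3}$ does (the latter would make $\psi_\bullet\phi_\bullet$ zero, not the identity); the intended conclusion is clearly the former.

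The one genuine issue is the hypothesis. Your proof of axiom (3) in the $(\Rightarrow)$ direction hinges on $\operatorname{End}(X_3)$ being local: from $-hc+f_2d=1_{X_3}$ you need one summand to be a unit. The proposition as printed assumes $X_1$ and $X_2$ indecomposable, and indecomposability of $X_2$ is used nowhere in your argument, while indecomposability of $X_3$ is used essentially. Almost certainly the statement carries a typo and should read ``$X_1$ and $X_3$ are indecomposable'' (this matches \cite[Theorem 4.20(a)]{[Lin]}, and note that the middle term of an Auslander--Reiten $\mathfrak{s}$-triangle is rarely indecomposable, so the printed hypothesis would be strange). You should say so explicitly rather than silently substituting $X_3$ for $X_2$: as written, your proof does not establish axiom (3) from the literal hypotheses, and you have not argued that simplicity together with indecomposability of $X_1$ and $X_2$ alone forces $\operatorname{End}(X_3)$ to be local.
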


\begin{proof}
The proof is an adaption of \cite[Theorem 4.20 (a)]{[Lin]}.
\end{proof}

From now on to the end of this section we assume that $\mathcal{C}$ is an extriangulated category with enough projectives $\mathcal{P}$ and enough injectives $\mathcal{I}$.

Given an $\mathfrak{s}$-triangle  $\delta=(\xymatrix{X_1\ar[r]^{f_1}&X_2\ar[r]^{f_2} & X_3\ar@{-->}[r]^{\rho}&})$, we define  the {\em contravariant defect} $\delta^\ast$  and the {\em covariant defect} $\delta_\ast$ by the following exact sequence of functors
$$\mathcal{C}(-,X_{1})\xrightarrow{\mathcal{C}(-,f_{1})}\mathcal{C}(-,X_{2})\xrightarrow{\mathcal{C}(-,f_{2})}\mathcal{C}(-,X_3)\rightarrow \delta^\ast\rightarrow 0,$$
$$\mathcal{C}(X_{3},-)\xrightarrow{\mathcal{C}(f_{2},-)}\mathcal{C}(X_{2},-)\xrightarrow{\mathcal{C}(f_{1},-)}\mathcal{C}(X_1,-)\rightarrow \delta_\ast\rightarrow 0.$$

\begin{example}\label{ex3.2}
(1) Let $\delta=P_X=(\xymatrix{\Omega X\ar[r]^{f_1}&P\ar[r]^{f_2} & X\ar@{-->}[r]^{\rho}&})$ with $P\in\mathcal{P}$. Then  $\delta^\ast=\mathcal{C}/[\mathcal{P}](-,X)$ and $\delta_\ast=\mathbb{E}(X,-)$.

(2) Let $\delta=I_X=(\xymatrix{X\ar[r]^{f_1}& I\ar[r]^{f_2} & \Sigma X\ar@{-->}[r]^{\rho}&})$ with $I\in\mathcal{I}$. Then $\delta^\ast=\mathbb{E}(-,X)$ and $\delta_\ast=\mathcal{C}/[\mathcal{I}](X,-)$.
\end{example}

The following result gives an explanation of \cite[Theorem 4.1]{[INP]}.

\begin{thm}\label{rem3.2} Let $\mathcal{C}$ be an extriangulated category with enough projectives $\mathcal{P}$ and enough injectives $\mathcal{I}$.

\textup{(1)} We have the following equivalences
 $$\mathfrak{s}\textup{-tri}(\mathcal{C})/\mathcal{R}_2\cong \textup{mod-}\mathcal{C}/[\mathcal{P}]\cong(\textup{mod-}(\mathcal{C}/[\mathcal{I}])^{\textup{op}})^{\textup{op}}.$$
 Moreover, the equivalence $F:\mathfrak{s}\textup{-tri}(\mathcal{C})/\mathcal{R}_2\cong \textup{mod-}\mathcal{C}/[\mathcal{P}]$ is given by $\delta\mapsto \delta^*$ and the equivalence $G:\mathfrak{s}\textup{-tri}(\mathcal{C})/\mathcal{R}_2\cong (\textup{mod-}(\mathcal{C}/[\mathcal{I}])^{\textup{op}})^{\textup{op}}$ is given by $\delta\mapsto \delta_*$.

\textup{(2)} The abelian category $\textup{mod-}\mathcal{C}/[\mathcal{P}]$ has enough projectives and enough injectives. Moreover, each projective object is of the form $\mathcal{C}/[\mathcal{P}](-,X)$, and each injective object is of the form $\mathbb{E}(-,X)$.

\textup{(3)} The abelian category $\textup{mod-}(\mathcal{C}/[\mathcal{I}])^{\textup{op}}$ has enough projectives and enough injectives. Moreover, each projective object is of the form $\mathcal{C}/[\mathcal{I}](X,-)$, and each injective object is of the form $\mathbb{E}(X,-)$.
\end{thm}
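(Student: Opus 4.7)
The plan is to lift Theorem \ref{thm3.2} to an explicit form by computing what the abstract equivalences do on objects, then transport the projective/injective structure from $\mathfrak{s}\textup{-tri}(\mathcal{C})/\mathcal{R}_2$ (which is understood via Proposition \ref{prop3.2} and its dual) through these equivalences, using Example \ref{ex3.2} to evaluate defects of projective/injective triangles.

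For (1), the abstract equivalences come from Theorem \ref{thm3.2}; only the explicit formulas need verification. Unwinding the proof, $F$ factors as
\[
\mathfrak{s}\textup{-tri}(\mathcal{C})/\mathcal{R}_2 \xrightarrow{\sim} \mathfrak{s}\textup{-def}(\mathcal{C})/[\textup{s-epi}(\mathcal{C})] \xrightarrow{\sim} \textup{Mor}(\mathcal{C}/[\mathcal{P}])/\mathcal{R} \xrightarrow{\sim} \textup{mod-}\mathcal{C}/[\mathcal{P}],
\]
sending $\delta = (X_1 \to X_2 \xrightarrow{f_2} X_3 \dashrightarrow)$ to the cokernel of $\mathcal{C}/[\mathcal{P}](-,X_2) \to \mathcal{C}/[\mathcal{P}](-,X_3)$. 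I would first check that $\delta^*$ descends to a module over $\mathcal{C}/[\mathcal{P}]$: for any $M \xrightarrow{\alpha} P \xrightarrow{w} X_3$ with $P \in \mathcal{P}$, projectivity of $P$ and the deflation property of $f_2$ give $w = f_2\beta$, so $w\alpha$ lies in the image of $\mathcal{C}(M, f_2)$ and hence vanishes in $\delta^*(M)$. The defining exact sequence then descends to the presentation $\mathcal{C}/[\mathcal{P}](-,X_2) \to \mathcal{C}/[\mathcal{P}](-,X_3) \to \delta^* \to 0$, identifying $F(\delta)$ with $\delta^*$. The formula $G(\delta) = \delta_*$ is dual.

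For (2), transport through $F$: by Proposition \ref{prop3.2} the projectives of $\mathfrak{s}\textup{-tri}(\mathcal{C})/\mathcal{R}_2$ are the $P_X$-triangles, so the projectives of $\textup{mod-}\mathcal{C}/[\mathcal{P}]$ are the $F(P_X) = (P_X)^* = \mathcal{C}/[\mathcal{P}](-,X)$, using Example \ref{ex3.2}(1); dually, injectives of $\mathfrak{s}\textup{-tri}(\mathcal{C})/\mathcal{R}_2 = \mathfrak{s}\textup{-tri}(\mathcal{C})/\mathcal{R}_1'$ are the $I_X$-triangles, mapping under $F$ to $(I_X)^* = \mathbb{E}(-,X)$ by Example \ref{ex3.2}(2). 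For enough projectives I would, given $X_\bullet$, lift $1_{X_3}$ through the chosen deflation $P \to X_3$ to a morphism $P \to X_2$ (projectivity), extend to a morphism $P_{X_3} \to X_\bullet$ of $\mathfrak{s}$-triangles, and verify it is an epimorphism in the quotient by passing under $F$ to the canonical surjection $\mathcal{C}/[\mathcal{P}](-,X_3) \twoheadrightarrow \delta^*$; enough injectives is dual. Part (3) is the same argument via $G$: projectives of $\textup{mod-}(\mathcal{C}/[\mathcal{I}])^{\textup{op}}$ correspond to injectives of $\mathfrak{s}\textup{-tri}(\mathcal{C})/\mathcal{R}_2$, giving $\mathcal{C}/[\mathcal{I}](X,-)$, and injectives correspond to projectives, giving $\mathbb{E}(X,-)$.

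The main obstacle is the bookkeeping in (1): a careful unwinding of the three successive equivalences is needed to confirm that $F$ really sends $\delta$ to $\delta^*$ on the nose, rather than to an isomorphic but differently-presented object. Once this concrete identification is in place, parts (2) and (3) follow by translation, since everything about projectives, injectives, and the existence of resolutions in the two module categories is forced by the already-established structure of $\mathfrak{s}\textup{-tri}(\mathcal{C})/\mathcal{R}_2$.
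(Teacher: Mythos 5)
Your proposal matches the paper's proof in both structure and substance: identify $F(\delta)$ with $\delta^*$ by observing that $\delta^*$ kills morphisms factoring through $\mathcal{P}$ (hence descends to a finitely presented $\mathcal{C}/[\mathcal{P}]$-module presented by $\mathcal{C}/[\mathcal{P}](-,f_2)$), dually for $G$, and then deduce (2) and (3) by transporting Proposition \ref{prop3.2} and its dual through these equivalences using Example \ref{ex3.2}. The paper states this more tersely, but your extra detail on why $\delta^*$ descends is exactly the point being invoked, so the argument is correct and essentially identical.
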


\begin{proof}
(1) The first assertion follows from Theorem \ref{thm3.2}.
Assume that $$\delta=(\xymatrix{X_1\ar[r]^{f_1}&X_2\ar[r]^{f_2} & X_3\ar@{-->}[r]^{\rho}&})$$ is an $\mathfrak{s}$-triangle. Recall that $F(\delta)=\textup{Coker}(\mathcal{C}/[\mathcal{P}](-,f_2))$ and $\delta^\ast=\textup{Coker}(\mathcal{C}(-,f_2))$. Since $\delta^\ast(\mathcal{P})=0$, we can view $\delta^\ast$ as a finitely presented $\mathcal{C}/[\mathcal{P}]$-module. Thus $F(\delta)=\delta^\ast$. Similarly, we have $G(\delta)=\delta_\ast$.

(2) and (3) follows from (1),  Proposition \ref{prop3.2} and Example \ref{ex3.2}.
\end{proof}

\begin{cor}\label{prop3.2.1}
Let $\mathcal{C}$ be an extriangulated category with enough projectives $\mathcal{P}$ and enough injectives $\mathcal{I}$. Then there is a duality
$$\Phi:\textup{mod-}\mathcal{C}/[\mathcal{P}]\rightarrow\textup{mod-}(\mathcal{C}/[\mathcal{I}])^{\textup{op}},\ \ \ \delta^\ast\mapsto \delta_\ast.$$
Moreover, by restrictions, we obtain the following two dualities
$$\Phi:\textup{proj-}\mathcal{C}/[\mathcal{P}]\rightarrow\textup{inj-}(\mathcal{C}/[\mathcal{I}])^{\textup{op}}, \ \ \ \mathcal{C}/[\mathcal{P}](-,X)\mapsto \mathbb{E}(X,-).$$
$$\Phi:\textup{inj-}\mathcal{C}/[\mathcal{P}]\rightarrow\textup{proj-}(\mathcal{C}/[\mathcal{I}])^{\textup{op}}, \ \ \ \mathbb{E}(-,X)\mapsto\mathcal{C}/[\mathcal{I}](X,-).$$
\end{cor}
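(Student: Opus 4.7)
The plan is to obtain $\Phi$ by composing the two equivalences already established in Theorem \ref{rem3.2}(1), exploiting the fact that one of them lands in an opposite category. Concretely, I have a covariant equivalence $F:\mathfrak{s}\textup{-tri}(\mathcal{C})/\mathcal{R}_2\xrightarrow{\sim}\textup{mod-}\mathcal{C}/[\mathcal{P}]$ with $F(\delta)=\delta^\ast$, and a covariant equivalence $G:\mathfrak{s}\textup{-tri}(\mathcal{C})/\mathcal{R}_2\xrightarrow{\sim}(\textup{mod-}(\mathcal{C}/[\mathcal{I}])^{\textup{op}})^{\textup{op}}$ with $G(\delta)=\delta_\ast$. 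Since the codomain of $G$ is an opposite category, $G$ viewed as a functor to $\textup{mod-}(\mathcal{C}/[\mathcal{I}])^{\textup{op}}$ is contravariant, i.e.\ a duality. Therefore $\Phi:=G\circ F^{-1}$ is a contravariant equivalence $\textup{mod-}\mathcal{C}/[\mathcal{P}]\to\textup{mod-}(\mathcal{C}/[\mathcal{I}])^{\textup{op}}$; tracking an $\mathfrak{s}$-triangle $\delta$ through $F^{-1}$ then $G$ yields $\Phi(\delta^\ast)=\delta_\ast$, giving the required global duality.

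To deduce the two restrictions, I would combine this with the explicit descriptions of projectives and injectives from Theorem \ref{rem3.2}(2)(3) and Example \ref{ex3.2}. A projective in $\textup{mod-}\mathcal{C}/[\mathcal{P}]$ is of the form $\mathcal{C}/[\mathcal{P}](-,X)=(P_X)^\ast$ where $P_X=(\Omega X\to P\to X)$ with $P\in\mathcal{P}$, so $\Phi(\mathcal{C}/[\mathcal{P}](-,X))=(P_X)_\ast=\mathbb{E}(X,-)$, which by Theorem \ref{rem3.2}(3) is an injective in $\textup{mod-}(\mathcal{C}/[\mathcal{I}])^{\textup{op}}$. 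Symmetrically, an injective in $\textup{mod-}\mathcal{C}/[\mathcal{P}]$ is of the form $\mathbb{E}(-,X)=(I_X)^\ast$ with $I_X=(X\to I\to\Sigma X)$, $I\in\mathcal{I}$, so $\Phi(\mathbb{E}(-,X))=(I_X)_\ast=\mathcal{C}/[\mathcal{I}](X,-)$, a projective of $\textup{mod-}(\mathcal{C}/[\mathcal{I}])^{\textup{op}}$.

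Finally, since $\Phi$ is a duality, its restriction to any strictly full subcategory is fully faithful, and the image calculations above show that the restrictions to $\textup{proj-}\mathcal{C}/[\mathcal{P}]$ and $\textup{inj-}\mathcal{C}/[\mathcal{P}]$ are essentially surjective onto $\textup{inj-}(\mathcal{C}/[\mathcal{I}])^{\textup{op}}$ and $\textup{proj-}(\mathcal{C}/[\mathcal{I}])^{\textup{op}}$ respectively; this yields the two restricted dualities on the nose. There is no real obstacle here: once one reads $(\textup{mod-}(\mathcal{C}/[\mathcal{I}])^{\textup{op}})^{\textup{op}}$ as re-interpreting a covariant equivalence as a contravariant one, the corollary is a formal consequence of Theorem \ref{rem3.2} together with Example \ref{ex3.2}. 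The only point requiring a line of care is verifying that the formula $\delta^\ast\mapsto\delta_\ast$ is well-defined independently of the choice of representing $\mathfrak{s}$-triangle, but this is automatic from $F$ and $G$ being functors on $\mathfrak{s}\textup{-tri}(\mathcal{C})/\mathcal{R}_2$.
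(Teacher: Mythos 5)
Your argument is correct and is exactly the route the paper takes: the paper's proof consists of the single line that the corollary is a direct consequence of Theorem \ref{rem3.2}, and your composition $\Phi=G\circ F^{-1}$ together with the identifications from Example \ref{ex3.2} and Theorem \ref{rem3.2}(2)(3) is precisely the intended unpacking of that remark (the same composition reappears explicitly in the proof of Proposition \ref{thm4.4.2}).
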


\begin{proof}
It is a direct consequence of Theorem \ref{rem3.2}.
\end{proof}


\begin{cor} (\cite[Proposition 4.9]{[INP]})
Let $\mathcal{C}$ be an extriangulated category with enough projectives $\mathcal{P}$ and enough injectives $\mathcal{I}$.

\textup{(1)} There is an isomorphism between $\mathcal{C}/[\mathcal{P}](Y,X)$ and the group of natural transformations from $\mathbb{E}(X,-)$ to $\mathbb{E}(Y,-)$.

\textup{(2)} There is an isomorphism between $\mathcal{C}/[\mathcal{I}](X,Y)$ and the group of natural transformations from $\mathbb{E}(-,X)$ to $\mathbb{E}(-,Y)$.
\end{cor}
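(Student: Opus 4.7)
The plan is to extract this corollary directly from the duality $\Phi$ of Corollary \ref{prop3.2.1}, combined with Yoneda's lemma applied inside the module categories mod-$\mathcal{C}/[\mathcal{P}]$ and mod-$(\mathcal{C}/[\mathcal{I}])^{\textup{op}}$. The key observation, already built into Theorem \ref{rem3.2}(2)--(3), is that the functors $\mathbb{E}(X,-)$ and $\mathbb{E}(-,X)$ are injective objects in these two abelian categories, and their $\Phi$-duals are the representables $\mathcal{C}/[\mathcal{P}](-,X)$ and $\mathcal{C}/[\mathcal{I}](X,-)$; everything then reduces to moving Hom groups across the duality and invoking Yoneda.

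For part (1), I would first argue that $\textup{Nat}(\mathbb{E}(X,-),\mathbb{E}(Y,-))$, a priori computed in the functor category $\textup{Fun}(\mathcal{C},Ab)$, coincides with $\textup{Hom}(\mathbb{E}(X,-),\mathbb{E}(Y,-))$ taken in mod-$(\mathcal{C}/[\mathcal{I}])^{\textup{op}}$; this is legitimate since $\mathbb{E}(X,-)$ and $\mathbb{E}(Y,-)$ vanish on $\mathcal{I}$, hence descend to bona fide objects of that module category (in fact injective ones by Theorem \ref{rem3.2}(3)). Then apply the duality
$$\Phi:\textup{proj-}\mathcal{C}/[\mathcal{P}]\longrightarrow\textup{inj-}(\mathcal{C}/[\mathcal{I}])^{\textup{op}},\qquad \mathcal{C}/[\mathcal{P}](-,X)\mapsto\mathbb{E}(X,-),$$
which, being a contravariant equivalence, yields
$$\textup{Hom}(\mathbb{E}(X,-),\mathbb{E}(Y,-))\cong\textup{Hom}(\mathcal{C}/[\mathcal{P}](-,Y),\mathcal{C}/[\mathcal{P}](-,X)).$$
Finally Yoneda's lemma inside mod-$\mathcal{C}/[\mathcal{P}]$ evaluates the right hand side as $\mathcal{C}/[\mathcal{P}](Y,X)$, producing the desired isomorphism.

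Part (2) is formally dual. I would use the companion duality
$$\Phi:\textup{inj-}\mathcal{C}/[\mathcal{P}]\longrightarrow\textup{proj-}(\mathcal{C}/[\mathcal{I}])^{\textup{op}},\qquad \mathbb{E}(-,X)\mapsto\mathcal{C}/[\mathcal{I}](X,-),$$
to identify $\textup{Nat}(\mathbb{E}(-,X),\mathbb{E}(-,Y))$, viewed as the Hom group in mod-$\mathcal{C}/[\mathcal{P}]$ between the injectives produced by Theorem \ref{rem3.2}(2), with $\textup{Hom}(\mathcal{C}/[\mathcal{I}](Y,-),\mathcal{C}/[\mathcal{I}](X,-))$ in mod-$(\mathcal{C}/[\mathcal{I}])^{\textup{op}}$, and then apply Yoneda there to land in $\mathcal{C}/[\mathcal{I}](X,Y)$.

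Because the substantive content is already packaged in Theorem \ref{rem3.2} and Corollary \ref{prop3.2.1}, there is no genuine obstacle; the only thing demanding care is bookkeeping of variances, namely checking that the Nat groups really agree with the Hom groups in the ambient module categories (so that Yoneda applies) and that $\Phi$, being contravariant, correctly swaps the roles of $X$ and $Y$ so the final formula reads $\mathcal{C}/[\mathcal{P}](Y,X)$ rather than $\mathcal{C}/[\mathcal{P}](X,Y)$. Both checks are automatic from the vanishing of $\mathbb{E}(X,-)$ on $\mathcal{I}$ and of $\mathbb{E}(-,X)$ on $\mathcal{P}$.
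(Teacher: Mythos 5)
Your proposal is correct and follows exactly the route the paper intends: the corollary is stated as a direct consequence of Theorem \ref{rem3.2} and the duality $\Phi$ of Corollary \ref{prop3.2.1}, with the Nat groups identified as Hom groups of injectives in the respective module categories and Yoneda finishing the computation after $\Phi$ swaps $X$ and $Y$. No discrepancy with the paper's (implicit) argument.
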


Now we have the following Auslander-Reiten duality and defect formula for extriangulated categories.

\begin{prop}\label{thm4.4.2}
Let $\mathcal{C}$ be an Ext-finite extriangulated category with enough projectives $\mathcal{P}$ and enough injectives $\mathcal{I}$. Assume that either $\mathcal{C}/[\mathcal{P}]$ or $\mathcal{C}/[\mathcal{I}]$ is a dualizing $k$-variety. Then there is an equivalence $\tau:\mathcal{C}/[\mathcal{P}]\cong\mathcal{C}/[\mathcal{I}]$ satisfying the following properties:

\textup{(1)} $D\mathbb{E}(-,X)\cong\mathcal{C}/[\mathcal{P}](\tau^{-1} X,-)$, $D\mathbb{E}(X,-)\cong\mathcal{C}/[\mathcal{I}](-,\tau X)$.

\textup{(2)} $D\delta_\ast=\delta^\ast\tau^{-1}$, $D\delta^\ast=\delta_\ast\tau$ for each $\mathfrak{s}$-triangle $\delta$.
\end{prop}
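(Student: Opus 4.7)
The plan is to build $\tau$ as a composition of equivalences already in hand---the duality $\Phi$ of Corollary~\ref{prop3.2.1}, the Yoneda-type identifications of Theorem~\ref{rem3.2}, and the $k$-dual $D=\textup{Hom}_k(-,k)$---and then read off (1) and (2) by following $\tau$ through the explicit descriptions of the defect functors $F(\delta)=\delta^{\ast}$ and $G(\delta)=\delta_{\ast}$ from Theorem~\ref{rem3.2}.

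Without loss of generality suppose $\mathcal{C}/[\mathcal{I}]$ is the dualizing $k$-variety; the other case is obtained symmetrically, or by first transporting the $k$-duality across $\Phi$. The preceding corollary and Theorem~\ref{rem3.2}(3) together say that $X\mapsto \mathbb{E}(X,-)$ is an equivalence $\mathcal{C}/[\mathcal{P}]\simeq \textup{inj-}(\mathcal{C}/[\mathcal{I}])^{\textup{op}}$. The $k$-duality provides an equivalence $D\colon \textup{mod-}(\mathcal{C}/[\mathcal{I}])^{\textup{op}}\to \textup{mod-}\mathcal{C}/[\mathcal{I}]$ exchanging projectives and injectives, and Yoneda gives $\mathcal{C}/[\mathcal{I}]\simeq \textup{proj-}\mathcal{C}/[\mathcal{I}]$ by $Y\mapsto \mathcal{C}/[\mathcal{I}](-,Y)$. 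Composing these equivalences yields an equivalence $\tau\colon \mathcal{C}/[\mathcal{P}]\to \mathcal{C}/[\mathcal{I}]$ characterized by $D\mathbb{E}(X,-)\cong \mathcal{C}/[\mathcal{I}](-,\tau X)$; this is the second isomorphism in~(1). For the first isomorphism, I would run the same argument starting from $\mathbb{E}(-,X)\in \textup{inj-}\mathcal{C}/[\mathcal{P}]$: applying $D$ (now a duality on $\textup{mod-}\mathcal{C}/[\mathcal{P}]$, once the variety structure is transported across $\Phi$) produces a projective $\mathcal{C}/[\mathcal{P}](Z,-)$, and unwinding Yoneda together with $\Phi$ shows $Z=\tau^{-1}X$.

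For (2), first verify the formula on projectives and injectives of $\mathfrak{s}\textup{-tri}(\mathcal{C})/\mathcal{R}_2$, described in Proposition~\ref{prop3.2} and Example~\ref{ex3.2}. For $\delta=P_X$ one has $\delta^{\ast}=\mathcal{C}/[\mathcal{P}](-,X)$ and $\delta_{\ast}=\mathbb{E}(X,-)$, so part~(1) gives $D\delta_{\ast}\cong \mathcal{C}/[\mathcal{I}](-,\tau X)$, while $\delta^{\ast}\tau^{-1}(Y)=\mathcal{C}/[\mathcal{P}](\tau^{-1}Y,X)\cong \mathcal{C}/[\mathcal{I}](Y,\tau X)$ because $\tau$ is an equivalence; the case $\delta=I_X$ is dual. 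For general $\delta$, both $D(-)_{\ast}$ and $(-)^{\ast}\tau^{-1}$ are exact functors $\mathfrak{s}\textup{-tri}(\mathcal{C})/\mathcal{R}_2\to \textup{mod-}\mathcal{C}/[\mathcal{I}]$ (since $F$, $G$, $D$, $\tau^{-1}$ are all exact equivalences up to variance), so a two-term projective presentation of $\delta^\ast$ by representables $\mathcal{C}/[\mathcal{P}](-,X)$, together with a five-lemma argument, pushes the isomorphism from the projective case to the general case.

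The main obstacle I anticipate is the compatibility of the two halves of~(1) under a single dualizing hypothesis: concretely, checking that a $k$-duality on one side of $\Phi$ yields a $k$-duality on the other, and that the resulting $\tau$ and $\tau^{-1}$ are genuinely mutually inverse. This should reduce to Hom-finiteness of both $\mathcal{C}/[\mathcal{P}]$ and $\mathcal{C}/[\mathcal{I}]$, which is supplied by the Ext-finiteness of $\mathcal{C}$ together with the presentation formula $\mathcal{C}/[\mathcal{P}](Y,X)\cong \textup{Nat}(\mathbb{E}(X,-),\mathbb{E}(Y,-))$ of the preceding corollary and its dual; with this in place the $k$-duality transports formally across $\Phi$.
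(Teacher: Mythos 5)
Your proposal is correct and follows essentially the same route as the paper: $\tau$ is defined so that the composite $D\Phi$ carries the representable $\mathcal{C}/[\mathcal{P}](-,X)$ to $\mathcal{C}/[\mathcal{I}](-,\tau X)$, and part (2) then holds because $D\Phi$ and $F\mapsto F\tau^{-1}$ are equivalences agreeing on representables. The only organizational difference is that the paper obtains the first isomorphism in (1) by specializing the already-established identity $D\delta^\ast=\delta_\ast\tau$ to $\delta=I_X$ via Example \ref{ex3.2}, which sidesteps the transport-of-duality issue you flag as the main obstacle.
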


\begin{proof} Without loss of generality, we assume that $\mathcal{C}/[\mathcal{I}]$ is a dualizing $k$-variety.
 The composition of  $\Phi:\textup{mod-}\mathcal{C}/[\mathcal{P}]\rightarrow \textup{mod-}(\mathcal{C}/[\mathcal{I}])^{\textup{op}}$ and  $D:\textup{mod-}(\mathcal{C}/[\mathcal{I}])^{\textup{op}}\rightarrow\textup{mod-}\mathcal{C}/[\mathcal{I}]$  defines an equivalence
$$\Theta: \ \textup{mod-}\mathcal{C}/[\mathcal{P}] \xrightarrow{\Phi}  \textup{mod-}(\mathcal{C}/[\mathcal{I}])^{\textup{op}} \xrightarrow{D}\textup{mod-}\mathcal{C}/[\mathcal{I}].$$
It follows that $\Theta(\mathcal{C}/[\mathcal{P}](-,X))=D\mathbb{E}(X,-)\cong\mathcal{C}/[\mathcal{I}](-,Y)$ for some $Y\in\mathcal{C}$.
Therefore, there is an equivalence $\tau: \mathcal{C}/[\mathcal{P}]\cong\mathcal{C}/[\mathcal{I}]$ mapping $X$ to $Y$. The equivalence $\tau$ induces an equivalence $\tau^{-1}_\ast: \textup{mod-}\mathcal{C}/[\mathcal{P}]\cong \textup{mod-}\mathcal{C}/[\mathcal{I}], F\mapsto F\tau^{-1}$, such that $D\Phi=\tau^{-1}_\ast$. Assume that $\delta$ is an $\mathfrak{s}$-triangle, then $D\Phi(\delta^\ast)=D\delta_\ast$. On the other hand, $\tau^{-1}_\ast(\delta^\ast)=\delta^\ast\tau^{-1}$. Hence, we have $D\delta_\ast=\delta^\ast\tau^{-1}$. It follows that $D\delta^*=\delta_*\tau$. If $\delta=I_{X}$, then by Example \ref{ex3.2} we have $\delta^*=\mathbb{E}(-,X)$ and $\delta_*=\mathcal{C}/\mathcal{I}(X,-)$. Therefore, we have
$D\mathbb{E}(-,X)\cong \mathcal{C}/[\mathcal{I}](X,\tau-)\cong\mathcal{C}/[\mathcal{P}](\tau^{-1}X,-)$ since $D\delta^*=\delta_*\tau$.
\end{proof}

\section{Application to rigid subcategories}
In this section, we will investigate the second application of Theorem \ref{3.1} in the case when $\mathcal{M}$ is a rigid subcategory of  $\mathcal{C}$, that is, $\mathbb{E}(M,M')=0$ for any objects $M,M'\in\mathcal{M}$.

Let $\mathcal{C}'$ and $\mathcal{C}''$ be two full subcategories of $\mathcal{C}$. We denote by Cocone$(\mathcal{C}',\mathcal{C}'')$ the full  subcategory of $\mathcal{C}$ of objects $X$ admitting an $\mathfrak{s}$-triangle $\xymatrixrowsep{0.1pc}\xymatrix{X\ar[r]&C'\ar[r] & C''\ar@{-->}[r]&}$ with $C'\in\mathcal{C}'$ and $C''\in\mathcal{C}''$. We denote by Cone$(\mathcal{C}',\mathcal{C}'')$ the full subcategory of objects $X$ admitting an $\mathfrak{s}$-triangle $\xymatrixrowsep{0.1pc}\xymatrix{C'\ar[r]&C''\ar[r] & X\ar@{-->}[r]&}$ with $C'\in\mathcal{C}'$ and $C''\in\mathcal{C}''$.

 For convenience we let $\mathcal{M}_L=\textup{Cocone}(\mathcal{M},\mathcal{M})$ and $\mathcal{M}_R=\textup{Cone}(\mathcal{M},\mathcal{M})$. If $\mathcal{C}$ has enough projectives $\mathcal{P}$, then we let $\Omega\mathcal{M}$=Cocone$(\mathcal{P},\mathcal{M})$. If $\mathcal{C}$ has enough injectives $\mathcal{I}$, then we let $\Sigma\mathcal{M}$=Cone$(\mathcal{M},\mathcal{I})$. 

\begin{lem}\label{rem2.1}
Let $\mathcal{C}$ be an extriangulated category and $\mathcal{M}$ be a rigid subcategory of $\mathcal{C}$. If $\xymatrixrowsep{0.1pc}\xymatrix{X\ar[r]^{k}&M_1\ar[r]^{f} & M_2\ar@{-->}[r]^{\delta}&}$ is an $\mathfrak{s}$-triangle with $M_i\in\mathcal{M}$, then $k$ is a left $\mathcal{M}$-approximation of $X$.
\end{lem}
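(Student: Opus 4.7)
The plan is to unwind the definition of left $\mathcal{M}$-approximation and reduce it to an $\operatorname{Ext}^{1}$-vanishing statement provided by rigidity. Recall that $k:X\to M_1$ is a left $\mathcal{M}$-approximation of $X$ precisely when every morphism $g:X\to M$ with $M\in\mathcal{M}$ factors as $g=hk$ for some $h:M_{1}\to M$. Equivalently, the induced map $\mathcal{C}(k,M):\mathcal{C}(M_{1},M)\to \mathcal{C}(X,M)$ is surjective for every $M\in\mathcal{M}$.

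First, I would fix an arbitrary object $M\in\mathcal{M}$ and apply the contravariant Hom functor $\mathcal{C}(-,M)$ to the given $\mathfrak{s}$-triangle
$$\xymatrix{X\ar[r]^{k}&M_1\ar[r]^{f} & M_2\ar@{-->}[r]^{\delta}&.}$$
By Lemma \ref{lem2.0}, the second long exact sequence yields an exact sequence of abelian groups
$$\mathcal{C}(M_{2},M)\xrightarrow{\mathcal{C}(f,M)}\mathcal{C}(M_{1},M)\xrightarrow{\mathcal{C}(k,M)}\mathcal{C}(X,M)\to \mathbb{E}(M_{2},M).$$

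Next, I would invoke the rigidity hypothesis: since $M_{2}\in\mathcal{M}$ and $M\in\mathcal{M}$, we have $\mathbb{E}(M_{2},M)=0$. Therefore $\mathcal{C}(k,M)$ is surjective, meaning that any $g:X\to M$ lifts to some $h:M_{1}\to M$ with $hk=g$. Since $M\in\mathcal{M}$ was arbitrary, $k$ is a left $\mathcal{M}$-approximation of $X$, as required.

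There is no real obstacle here; the argument is a one-step consequence of the long exact sequence attached to an $\mathfrak{s}$-triangle (Lemma \ref{lem2.0}) combined with the definition of rigidity. The only point worth being careful about is that rigidity is stated symmetrically as $\mathbb{E}(M,M')=0$ for all $M,M'\in\mathcal{M}$, which is exactly what is needed to kill the term $\mathbb{E}(M_{2},M)$ on the right end of the exact sequence.
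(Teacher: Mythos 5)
Your proof is correct and coincides with the paper's own argument: both apply the contravariant exact sequence of Lemma \ref{lem2.0} to the given $\mathfrak{s}$-triangle and use rigidity to get $\mathbb{E}(M_{2},M)=0$, hence surjectivity of $\mathcal{C}(k,M)$. Nothing further is needed.
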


\begin{proof}
For any $M\in\mathcal{M}$, by Lemma \ref{lem2.0}
we have the following exact sequence $$\mathcal{C}(M_2,M)\rightarrow\mathcal{C}(M_1,M)\rightarrow\mathcal{C}(X,M)\rightarrow \mathbb{E}(M_2,M)=0.$$
Hence, $k$ is a left $\mathcal{M}$-approximation of $X$.
\end{proof}


\begin{lem}\label{lem2.2} Let $\mathcal{C}$ be an extriangulated category with enough projectives $\mathcal{P}$ and $\mathcal{M}$ be a rigid subcategory of $\mathcal{C}$ containing $\mathcal{P}$.
Assume that the following diagram
$$\xymatrix{
 X\ar[r]^{k}\ar[d]^{g} & M_1 \ar[r]^{f}\ar[d]^{a} & M_2\ar@{-->}[r]^{\delta}\ar[d]^{b} & \\
 X'\ar[r]^{k'} & M_1' \ar[r]^{f'} &  M_2' \ar@{-->}[r]^{\delta'}&
}$$
is a morphism of $\mathfrak{s}$-triangles with $M_i, M_i'\in\mathcal{M}$. Then

\textup{(1)} The following statements are equivalent.

\ \ \ \textup{(a)}  The morphism $b$ factors through $f'$.

\ \ \ \textup{(b)}  The morphism $(a,b)$ factors through some object in $\textup{s-epi}(\mathcal{M})$.

\ \ \ \textup{(c)}  The morphism $g$ factors through some object in $\mathcal{M}$.

\textup{(2)} The following statements are equivalent.

\ \ \ \textup{(a)} The morphism $\underline{a}$ factors through $\underline{f}$ in $\mathcal{M}/[\mathcal{P}]$.

\ \ \ \textup{(b)} The morphism $(a,b)$ factors through some object in $\textup{sp-epi}(\mathcal{M})$.

\ \ \ \textup{(c)} The morphism $g$ factors through some object in $\Omega\mathcal{M}$.
\end{lem}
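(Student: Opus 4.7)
My plan is to reduce (a) $\Leftrightarrow$ (b) in both parts to Lemma \ref{lem3.0} and then establish (a) $\Leftrightarrow$ (c). The three main tools I will use are: the identity $g_*\delta = b^*\delta'$ in $\mathbb{E}(M_2, X')$, which holds because $(g, a, b)$ is a morphism of $\mathfrak{s}$-triangles; the fact from Lemma \ref{rem2.1} that $k$ and $k'$ are left $\mathcal{M}$-approximations; and the connecting maps provided by Lemma \ref{lem2.0}, namely $\mathcal{C}(X, X') \to \mathbb{E}(M_2, X')$, $\alpha \mapsto \alpha_*\delta$, and $\mathcal{C}(M_2, M_2') \to \mathbb{E}(M_2, X')$, $\beta \mapsto \beta^*\delta'$, whose kernels are the images of $\mathcal{C}(k, X')$ and $\mathcal{C}(M_2, f')$ respectively. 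For part (1), if $b$ factors through $f'$ then $b^*\delta' = 0$, so $g_*\delta = 0$, and $g$ lies in the image of $\mathcal{C}(M_1, X') \to \mathcal{C}(X, X')$, hence factors through $M_1 \in \mathcal{M}$. Conversely, if $g = \alpha\beta$ with $\beta : X \to M'' \in \mathcal{M}$, then $\beta_*\delta \in \mathbb{E}(M_2, M'') = 0$ by rigidity, so $g_*\delta = 0$, hence $b^*\delta' = 0$, and $b$ factors through $f'$.

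For part (2), the easier direction (c) $\Rightarrow$ (a) is by successive approximation liftings. Write $g = \alpha\beta$ with $\beta: X \to Y$, $\alpha: Y \to X'$, and $Y$ admitting an $\mathfrak{s}$-triangle $Y \xrightarrow{u} P \to M$ with $P \in \mathcal{P}$, $M \in \mathcal{M}$. Since $u$ is a left $\mathcal{M}$-approximation by Lemma \ref{rem2.1}, the composite $k'\alpha: Y \to M_1'$ factors as $\gamma u$ for some $\gamma: P \to M_1'$; similarly, $u\beta: X \to P \in \mathcal{M}$ factors as $\mu k$ for some $\mu: M_1 \to P$. Then $\gamma\mu k = k'\alpha\beta = k'g = ak$, so $(a - \gamma\mu)k = 0$, and by Lemma \ref{lem2.0} there is $\psi: M_2 \to M_1'$ with $a - \gamma\mu = \psi f$. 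Since $\gamma\mu$ factors through $P \in \mathcal{P}$, this yields $\underline{a} = \underline{\psi}\,\underline{f}$ in $\mathcal{M}/[\mathcal{P}]$.

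The main obstacle is (a) $\Rightarrow$ (c) in part (2), where I need to produce an explicit object of $\Omega\mathcal{M}$ through which $g$ factors. Starting from $a = pf + vu$ with $u: M_1 \to P$, $v: P \to M_1'$ and $P \in \mathcal{P}$, I first lift $v$ through a deflation $Q \to M_1'$ with $Q \in \mathcal{P}$ (available by enough projectives together with projectivity of $P$) to reduce to the case where $v$ itself is a deflation; this yields an $\mathfrak{s}$-triangle $Y \to P \xrightarrow{v} M_1'$ with extension $\sigma$ and $Y \in \Omega\mathcal{M}$. Applying (ET4)$^{\textup{op}}$ to this $\mathfrak{s}$-triangle and $X' \xrightarrow{k'} M_1' \xrightarrow{f'} M_2'$ (which share $M_1'$ in the required configuration) produces $\mathfrak{s}$-triangles $Y \to E \to X'$ with extension $(k')^*\sigma$ and $E \to P \to M_2'$; the latter shows $E \in \Omega\mathcal{M}$. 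The key computation is $k'g = ak = (pf + vu)k = vuk$ (using $fk = 0$), and therefore the connecting map $\mathcal{C}(X, X') \to \mathbb{E}(X, Y)$ associated to $Y \to E \to X'$ sends $g$ to $(k'g)^*\sigma = (uk)^*(v^*\sigma) = 0$, since $v^*\sigma \in \mathbb{E}(P, Y) = 0$ by projectivity of $P$. Thus $g$ lifts to some $\tilde g: X \to E$, and $g = (E \to X') \circ \tilde g$ is the desired factorization through $E \in \Omega\mathcal{M}$.
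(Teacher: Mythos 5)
Your proof is correct and follows essentially the same route as the paper's: both reduce (a)$\Leftrightarrow$(b) of each part to Lemma \ref{lem3.0}, both handle (a)$\Leftrightarrow$(c) of part (1) via the identity $g_*\delta=b^*\delta'$ together with rigidity and the left $\mathcal{M}$-approximation property of $k$ from Lemma \ref{rem2.1}, and both produce the required object of $\Omega\mathcal{M}$ in (2)(a)$\Rightarrow$(c) by applying (ET4)$^{\textup{op}}$ to a projective presentation of $M_1'$ and the $\mathfrak{s}$-triangle ending at $M_2'$. The only substantive divergence is your concluding step there: where the paper invokes the weak-pullback property of the (ET4)$^{\textup{op}}$ square, you show the connecting map of $Y\rightarrow E\rightarrow X'$ kills $g$ via $(k'g)^*\sigma=(uk)^*(v^*\sigma)=0$ and lift along the exact sequence of Lemma \ref{lem2.0} --- an equivalent and, if anything, more self-contained justification.
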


\begin{proof}
 (1) We note that (a)$\Leftrightarrow$(b) follows from Lemma \ref{lem3.0}(1).

 (a)$\Rightarrow$(c).  Assume that  $b$ factors through $f'$. It follows that  $g$ factors through $k$ by Lemma \ref{2.1}, which implies that $g$ factors through $M_1\in\mathcal{M}$.

 (c)$\Rightarrow$(a). Suppose that $g$ has a factorization $X\xrightarrow{g_1} M\xrightarrow{g_2} X'$ with $M\in\mathcal{M}$. By Lemma \ref{rem2.1} we have $g_1$ factors through $k$. Thus $g$ factors through $k$. It follows that $b$ factors through $f'$ by Lemma \ref{2.1}.

 (2) We note that (a)$\Leftrightarrow$(b) follows from Lemma \ref{lem3.0}(2).

(a)$\Rightarrow$(c). Suppose that there is a morphism $\underline{p}:M_2\rightarrow M_1'$ such that $\underline{p}\underline{f}=\underline{a}$.
Since $\mathcal{C}$ has enough projectives, there is an $\mathfrak{s}$-triangle
 $\xymatrix{\Omega M_1'\ar[r]^{a_1'}& P\ar[r]^{a_1} & M_1'\ar@{-->}[r]^{\delta''}&\\
 }$ 
 with $P\in\mathcal{P}$. It is easy to see that  $pf-a$ factors through $a_1$. Assume that $pf-a=a_1a_2$ where $a_2:M_1\rightarrow P$. By (ET4)$^{\textup{op}}$, we have the following diagram
$$\xymatrix{
\Omega M'_1\ar[r]^{d'}\ar@{=}[d] & Y \ar[r]^{d}\ar[d]^{c} & X'\ar@{-->}[r]^{k'^*\delta''}\ar[d]^{k'} & \\
\Omega M'_1\ar[r]^{a_1'} & P \ar[r]^{a_1}\ar[d]^{c'} &  M_1' \ar@{-->}[r]^{\delta''}\ar[d]^{f'} & \\
& M_2'\ar@{=}[r]\ar@{-->}[d]^{\rho} & M_2'\ar@{-->}[d]^{\delta'} \\
& & & \\
 }$$
where  the first row and the second column are $\mathfrak{s}$-triangles.
It follows that $Y\in\Omega\mathcal{M}$. Since the upper-right square of the above diagram obtained by (ET4)$^{\textup{op}}$ is a weak pullback and  $(k',a_1)\left(
                                                           \begin{smallmatrix}
                                                             g \\
                                                             a_2k \\
                                                           \end{smallmatrix}
                                                         \right)=ak+a_1a_2k=pfk=0$, there exists a morphism $h:X\rightarrow Y$ such that $dh=g$ and $ch=a_2k$.  Therefore,  $g$ factors through $Y\in\Omega\mathcal{M}$.

(c)$\Rightarrow$(a). Suppose that $g$ has a factorization $X\xrightarrow{g_1}\Omega M\xrightarrow{g_2} X'$ with $\Omega M\in \Omega\mathcal{M}$. Then by Lemma \ref{rem2.1} and (ET3) we complete the following morphism of $\mathfrak{s}$-triangles
$$\xymatrix{
 X\ar[r]^{k}\ar[d]^{g_1} & M_1 \ar[r]^{f}\ar@{-->}[d]^{a_1} & M_2\ar@{-->}[r]^{\delta}\ar@{-->}[d]^{b_1} & \\
 \Omega M \ar[r]^{i}\ar[d]^{g_2} & P \ar[r]^{\pi}\ar@{-->}[d]^{a_2} & M\ar@{-->}[r]^{\delta''}\ar@{-->}[d]^{b_2}& \\
 X'\ar[r]^{k'} & M_1' \ar[r]^{f'} &  M_2' \ar@{-->}[r]^{\delta'}& \\
}$$ where $P\in\mathcal{P}$. Since $(a-a_2a_1)k=k'(g-g_2g_1)=0$, there exists a morphism $p:M_2\rightarrow M_1'$ such that $a-a_2a_1=pf$. Therefore, $\underline{a}=\underline{p}\underline{f}$.
\end{proof}

\begin{lem}\label{lem5.2}
Let $\mathcal{C}$ be an extriangulated category with enough projectives $\mathcal{P}$ and $\mathcal{M}$ be a rigid subcategory of $\mathcal{C}$ containing $\mathcal{P}$. Then

\textup{(1)} $\mathfrak{s}\textup{-def}(\mathcal{M})/[\textup{s-epi}(\mathcal{M})]\cong\mathcal{M}_{L}/[\mathcal{M}]$.

\textup{(2)} $\mathfrak{s}\textup{-def}(\mathcal{M})/[\textup{sp-epi}(\mathcal{M})]\cong\mathcal{M}_{L}/[\Omega\mathcal{M}]$.
\end{lem}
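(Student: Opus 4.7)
The plan is to construct, in parallel for (1) and (2), a functor
\[
G:\mathfrak{s}\textup{-def}(\mathcal{M})\longrightarrow\mathcal{M}_L
\]
sending an $\mathfrak{s}$-deflation $f:M_1\to M_2$ to the third term $X_f$ of an $\mathfrak{s}$-triangle $X_f\xrightarrow{k_f} M_1\xrightarrow{f} M_2\dashrightarrow\delta_f$, and sending a morphism $(a,b):f\to f'$ to a morphism $g:X_f\to X_{f'}$ supplied by (ET3)$^{\textup{op}}$, so that $(g,a,b)$ is a morphism of $\mathfrak{s}$-triangles and in particular $k_{f'}g=ak_f$. I will then show that $G$ descends to fully faithful dense functors
\[
\overline{G}:\mathfrak{s}\textup{-def}(\mathcal{M})/[\textup{s-epi}(\mathcal{M})]\xrightarrow{\sim}\mathcal{M}_L/[\mathcal{M}],\qquad \widetilde{G}:\mathfrak{s}\textup{-def}(\mathcal{M})/[\textup{sp-epi}(\mathcal{M})]\xrightarrow{\sim}\mathcal{M}_L/[\Omega\mathcal{M}],
\]
which yields both equivalences.

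The first step is well-definedness of $g$ modulo the relevant ideal. Given two lifts $g,g'$ of $(a,b)$, the triple $(g-g',0,0)$ is a morphism of $\mathfrak{s}$-triangles, since $k_{f'}(g-g')=ak_f-ak_f=0$ and $(g-g')_\ast\delta_f=b^\ast\delta_{f'}-b^\ast\delta_{f'}=0$. Its middle pair $(0,0)$ factors trivially through the zero object, which sits in both $\textup{s-epi}(\mathcal{M})$ and $\textup{sp-epi}(\mathcal{M})$, so Lemma \ref{lem2.2}(1)(b)$\Rightarrow$(c) (resp.\ Lemma \ref{lem2.2}(2)(b)$\Rightarrow$(c)) forces $g-g'$ to factor through an object of $\mathcal{M}$ (resp.\ $\Omega\mathcal{M}$). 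The same implication shows that if $(a,b)$ lies in $[\textup{s-epi}(\mathcal{M})]$ (resp.\ $[\textup{sp-epi}(\mathcal{M})]$), then $g$ factors through $\mathcal{M}$ (resp.\ $\Omega\mathcal{M}$), so $G$ descends to the stated quotients. Functoriality then follows: for composable $(a,b)$ and $(a',b')$, both $g'g$ and any chosen lift $g''$ of the composite $(a'a,b'b)$ satisfy $k_{f''}(\,\cdot\,)=a'ak_f$, hence agree modulo the ideal.

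Density is immediate from the definition of $\mathcal{M}_L$. For fullness, given $g:X_f\to X_{f'}$, I use that $k_f$ is a left $\mathcal{M}$-approximation of $X_f$ by Lemma \ref{rem2.1}: this yields $a:M_1\to M_1'$ with $ak_f=k_{f'}g$, and then (ET3) supplies $b:M_2\to M_2'$ completing $(a,b):f\to f'$, which manifestly lifts $g$. For faithfulness, the converse direction Lemma \ref{lem2.2}(1)(c)$\Rightarrow$(b) (resp.\ Lemma \ref{lem2.2}(2)(c)$\Rightarrow$(b)) converts a factorization of $g$ through $\mathcal{M}$ (resp.\ $\Omega\mathcal{M}$) back into a factorization of $(a,b)$ through $\textup{s-epi}(\mathcal{M})$ (resp.\ $\textup{sp-epi}(\mathcal{M})$).

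The main technical burden is to keep the two parallel bookkeepings --- quotient by $[\mathcal{M}]$ via $\textup{s-epi}(\mathcal{M})$ versus quotient by $[\Omega\mathcal{M}]$ via $\textup{sp-epi}(\mathcal{M})$ --- aligned throughout, since well-definedness, descent, and faithfulness each call on the corresponding halves of Lemma \ref{lem2.2}. Once this is carefully organized, the argument amounts to packaging Lemma \ref{lem2.2} together with the left $\mathcal{M}$-approximation property of Lemma \ref{rem2.1} into an equivalence of categories.
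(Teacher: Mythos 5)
Your proof is correct and follows essentially the same route as the paper: both rest on Lemma \ref{rem2.1} (the cocone map $k$ being a left $\mathcal{M}$-approximation) together with the two halves of Lemma \ref{lem2.2} to identify the ideals and pass to the quotients. The only organizational difference is in part (2), where the paper builds the functor in the opposite direction, $\mathcal{M}_L\to\mathfrak{s}\textup{-def}(\mathcal{M})/[\textup{sp-epi}(\mathcal{M})]$, by fixing an $\mathfrak{s}$-triangle for each object of $\mathcal{M}_L$; your version, which keeps the direction of part (1) and obtains well-definedness modulo $[\Omega\mathcal{M}]$ from Lemma \ref{lem2.2}(2), implication (b)$\Rightarrow$(c), applied to $(g-g',0,0)$, works equally well.
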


\begin{proof}
(1) For any object $f:M_1\rightarrow M_2$ in $\mathfrak{s}\textup{-def}(\mathcal{M})$,
there exists an $\mathfrak{s}$-triangle $\xymatrix{X\ar[r]^{k}&M_1\ar[r]^{f} & M_2\ar@{-->}[r]^{\delta}&}$ where $X$ is unique under isomorphism.
By (ET3)$^{\textup{op}}$,  the following commutative diagram
$$\xymatrix{
 X\ar[r]^{k}\ar@{-->}[d]^{g} & M_1 \ar[r]^{f}\ar[d]^{a} & M_2\ar@{-->}[r]^{\delta}\ar[d]^{b} & \\
 X'\ar[r]^{k'} & M_1' \ar[r]^{f'} &  M_2' \ar@{-->}[r]^{\delta'}&
}$$
whose rows are $\mathfrak{s}$-triangles can be completed to a morphism of  $\mathfrak{s}$-triangles. 
The morphism $g:X\rightarrow X'$ is not unique in general. Assume that $g':X\rightarrow X'$ is another morphism such that $(g',a,b)$ is a morphism of $\mathfrak{s}$-triangles. Then $(g-g',0,0)$ is also a morphism of $\mathfrak{s}$-triangles. Lemma \ref{2.1} implies that $g-g'$ factors through $k$, that is, $g-g'$ factors through $M_1\in\mathcal{M}$. It follows that $\underline{g}=\underline{g}'$ in $\mathcal{M}_L/[\mathcal{M}]$.

 Hence the assignment $(M_{1}\xrightarrow{f} M_2)\mapsto X, (a,b)\mapsto \underline{g}$ defines a well-defined functor $F: \mathfrak{s}\textup{-def}(\mathcal{M})\rightarrow\mathcal{M}_L/[\mathcal{M}]$. It is clear that $F$ is dense. The functor $F$ is full by Lemma \ref{rem2.1} and (ET3). Lemma \ref{lem2.2}(1) implies that $F$ induces an equivalence $\mathfrak{s}\textup{-def}(\mathcal{M})/\mathcal{R}\cong\mathcal{M}_L/[\mathcal{M}]$, that is, $\mathfrak{s}\textup{-def}(\mathcal{M})/[\textup{s-epi}(\mathcal{M})]\cong\mathcal{M}_{L}/[\mathcal{M}]$.

(2) For any $X\in \mathcal{M}_{L}$, we fix an $\mathfrak{s}$-triangle $\xymatrix{X\ar[r]^{k}&M_1\ar[r]^{f} & M_2\ar@{-->}[r]^{\delta}&}$ with $M_1,M_2\in\mathcal{M}$.
Assume that $g:X\rightarrow X'$ is a morphism in $\mathcal{M}_{L}$. Then by Lemma \ref{rem2.1} and (ET3) there exist two morphisms $a: M_1\rightarrow M_1'$ and $b:M_2\rightarrow M_2'$ such that the following is a morphism of $\mathfrak{s}$-triangles
$$\xymatrix{
 X\ar[r]^{k}\ar[d]^{g} & M_1 \ar[r]^{f}\ar[d]^{a} & M_2\ar@{-->}[r]^{\delta}\ar[d]^{b} & \\
 X'\ar[r]^{k'} & M_1' \ar[r]^{f'} &  M_2' \ar@{-->}[r]^{\delta'}&
}$$
where $M'_1, M_2'\in\mathcal{M}$. Suppose that the morphisms $a': M_1\rightarrow M_1'$ and $b':M_2\rightarrow M_2'$ satisfying $(g,a',b')$ is also a morphism of $\mathfrak{s}$-triangles. Then $(0,a-a',b-b')$ is a morphism of $\mathfrak{s}$-triangles. It follows that $(a-a',b-b')$ factors through some object in $\textup{sp-epi}(\mathcal{M})$ by Lemma \ref{lem2.2}(2). We have $\underline{(a,b)}=\underline{(a',b')}$ in $\mathfrak{s}\textup{-def}(\mathcal{M})/[\textup{sp-epi}(\mathcal{M})]$.
Therefore, the assignment $X\mapsto (M_{1}\xrightarrow{f} M_2), g\mapsto \underline{(a,b)}$ defines a well-defined functor $G: \mathcal{M}_L\rightarrow \mathfrak{s}\textup{-def}(\mathcal{M})/[\textup{sp-epi}(\mathcal{M})]$.

It is easy to see that $G$ is full and dense. Lemma \ref{lem2.2}(2) implies that $G$ induces an equivalence $\mathcal{M}_{L}/[\Omega\mathcal{M}]\cong\mathfrak{s}\textup{-def}(\mathcal{M})/[\textup{sp-epi}(\mathcal{M})]$.
\end{proof}

\begin{thm}\label{thm4.1}
Let $\mathcal{C}$ be an extriangulated category and $\mathcal{M}$ be a rigid subcategory of $\mathcal{C}$.

\textup{(1)} If $\mathcal{C}$ has enough projectives $\mathcal{P}$ and $\mathcal{M}$ contains $\mathcal{P}$, then $\mathcal{M}_{L}/[\mathcal{M}]\cong\textup{mod-}(\mathcal{M}/[\mathcal{P}])$ and  $\mathcal{M}_{L}/[\Omega\mathcal{M}]\cong(\textup{mod-}(\mathcal{M}/[\mathcal{P}])^{\textup{op}})^{\textup{op}}$.

\textup{(2)} If $\mathcal{C}$ has enough injectives $\mathcal{I}$ and $\mathcal{M}$  contains $\mathcal{I}$, then $\mathcal{M}_{R}/[\mathcal{M}]\cong(\textup{mod-}(\mathcal{M}/[\mathcal{I}])^{\textup{op}})^{\textup{op}}$ and  $\mathcal{M}_{R}/[\Sigma\mathcal{M}]\cong\textup{mod-}(\mathcal{M}/[\mathcal{I}])$.
\end{thm}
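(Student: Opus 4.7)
The plan is to observe that this theorem is essentially a direct composition of two results already established in the paper, so the proof reduces to bookkeeping about dualities and to citing the correct lemmas. Since statement (2) is dual to statement (1) via the opposite extriangulated category structure $(\mathcal{C}^{\textup{op}},\mathbb{E}^{\textup{op}},\mathfrak{s}^{\textup{op}})$ (which swaps projectives with injectives, $\mathfrak{s}$-deflations with $\mathfrak{s}$-inflations, and sends $\mathcal{M}_L$ and $\Omega\mathcal{M}$ to $\mathcal{M}_R$ and $\Sigma\mathcal{M}$ respectively), I would state at the outset that it suffices to prove (1).

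For the first equivalence in (1), I would chain Lemma \ref{lem5.2}(1), which identifies
$$\mathcal{M}_L/[\mathcal{M}]\cong \mathfrak{s}\textup{-def}(\mathcal{M})/[\textup{s-epi}(\mathcal{M})],$$
with Theorem \ref{3.1}(1), which gives
$$\mathfrak{s}\textup{-def}(\mathcal{M})/[\textup{s-epi}(\mathcal{M})]\cong \textup{mod-}(\mathcal{M}/[\mathcal{P}]).$$
Composing these two equivalences yields $\mathcal{M}_L/[\mathcal{M}]\cong \textup{mod-}(\mathcal{M}/[\mathcal{P}])$ as required.

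For the second equivalence in (1), the same strategy applies. Lemma \ref{lem5.2}(2) furnishes
$$\mathcal{M}_L/[\Omega\mathcal{M}]\cong \mathfrak{s}\textup{-def}(\mathcal{M})/[\textup{sp-epi}(\mathcal{M})],$$
and the second half of Theorem \ref{3.1}(1) identifies the right-hand side with $(\textup{mod-}(\mathcal{M}/[\mathcal{P}])^{\textup{op}})^{\textup{op}}$. Composing these gives the desired equivalence.

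There is essentially no obstacle left at this stage, since all the real work has been accomplished: Lemma \ref{lem5.2} was proved using Lemma \ref{rem2.1}, (ET3)$^{\textup{op}}$, and Lemma \ref{lem2.2} to transport morphism-category data to $\mathcal{M}_L$, and Theorem \ref{3.1} already handled the identification with finitely presented modules. The only thing to verify carefully is the dualization for (2): I would spell out that $\mathfrak{s}$-def in $\mathcal{C}^{\textup{op}}$ becomes $\mathfrak{s}$-inf in $\mathcal{C}$, that enough projectives in $\mathcal{C}^{\textup{op}}$ is enough injectives in $\mathcal{C}$, and that $\textup{Cocone}(\mathcal{M},\mathcal{M})$ computed in $\mathcal{C}^{\textup{op}}$ is precisely $\textup{Cone}(\mathcal{M},\mathcal{M})=\mathcal{M}_R$ in $\mathcal{C}$, with $\Omega\mathcal{M}$ correspondingly becoming $\Sigma\mathcal{M}$. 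This reduces (2) to applying (1) in the opposite category.
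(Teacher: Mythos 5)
Your proposal is correct and follows essentially the same route as the paper: the paper's own proof is the one-line composition of Lemma \ref{lem5.2} with the morphism-category equivalence of Section 3, together with the remark that (2) is dual. In fact your citation of Theorem \ref{3.1}(1) is the more accurate one — the paper's proof cites Theorem \ref{thm3.2}, which treats the case $\mathcal{M}=\mathcal{C}$ and appears to be a slip for Theorem \ref{3.1}.
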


\begin{proof} We only prove (1). It follows from Theorem \ref{thm3.2} and Lemma \ref{lem5.2}.
\end{proof}

\begin{defn}(\cite[Definition 5.3]{[LN]})
Let $\mathcal{C}$ be an extriangulated category with enough projectives and enough injectives. A full subcategory $\mathcal{M}$ is called {\em $n$-cluster tilting} for some integer $n\geq 2$, if it satisfies the following conditions.

(1) $\mathcal{M}$ is functorially finite in $\mathcal{C}$.

(2) $X\in\mathcal{M}$ if and only if $\mathbb{E}^i(X,\mathcal{M})=0$ for $i\in\{1,2,\cdots,n-1\}$.

(3) $X\in\mathcal{M}$ if and only if $\mathbb{E}^i(\mathcal{M},X)=0$ for $i\in\{1,2,\cdots,n-1\}$.
\end{defn}

In particular, a 2-cluster tilting subcategory of $\mathcal{C}$ is simply called {\em cluster tilting subcategory}.

Assume that $\mathcal{M}$ is an $n$-cluster tilting subcategory. Define $$^{\bot_{n-2}}\mathcal{M}=\{X\in\mathcal{C}|\mathbb{E}^i(X,\mathcal{M})=0,i\in\{1,2,\cdots,n-2\}\}$$ $$\textup{and}\ \  \mathcal{M}^{\bot_{n-2}}=\{X\in\mathcal{C}|\mathbb{E}^i(\mathcal{M},X)=0,i\in\{1,2,\cdots,n-2\}\}.$$

\begin{prop}
Let $\mathcal{M}$ be an $n$-cluster tilting subcategory of an extriangulated category $\mathcal{C}$, then $\mathcal{M}_L=^{\bot_{n-2}}\mathcal{M}$ and  $\mathcal{M}_R=\mathcal{M}^{\bot_{n-2}}$.
\end{prop}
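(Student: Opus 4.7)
The plan is to prove the first equality $\mathcal{M}_L = {}^{\bot_{n-2}}\mathcal{M}$ by two inclusions; the equality $\mathcal{M}_R = \mathcal{M}^{\bot_{n-2}}$ will then follow by the dual argument. First I would dispose of the easy inclusion $\mathcal{M}_L \subseteq {}^{\bot_{n-2}}\mathcal{M}$: given $X \in \mathcal{M}_L$ with defining $\mathfrak{s}$-triangle $X \to M_1 \to M_2 \dashrightarrow$ and any $M \in \mathcal{M}$, the long exact sequence of Lemma~\ref{lem2.5} produces the segment $\mathbb{E}^i(M_1,M) \to \mathbb{E}^i(X,M) \to \mathbb{E}^{i+1}(M_2,M)$, whose flanks both vanish whenever $1 \leq i \leq n-2$ by condition~(2) of the $n$-cluster tilting definition (applied to $M_1$ for the left, and to $M_2$ for the right, since $i+1 \leq n-1$). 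Hence $\mathbb{E}^i(X,M) = 0$ in this range.

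For the reverse inclusion, I would first record the elementary observation that $\mathcal{I} \subseteq \mathcal{M}$: for any $I \in \mathcal{I}$ and any $M \in \mathcal{M}$, injectivity gives $\mathbb{E}^i(M,I) = \mathbb{E}(\Omega^{i-1}M, I) = 0$ for every $i \geq 1$, so $I \in \mathcal{M}$ by condition~(3). Given $X \in {}^{\bot_{n-2}}\mathcal{M}$, my strategy is to combine the injective $\mathfrak{s}$-triangle $X \xrightarrow{\iota} I^0 \to \Sigma X \dashrightarrow$ (with $I^0 \in \mathcal{I} \subseteq \mathcal{M}$) with a left $\mathcal{M}$-approximation $f : X \to M_1$ (which exists by functorial finiteness). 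Applying Lemma~\ref{2.2} to this $\mathfrak{s}$-triangle and the morphism $f$ yields an $\mathfrak{s}$-triangle
$$\xymatrix{X \ar[r]^{\left(\begin{smallmatrix} f \\ \iota \end{smallmatrix}\right)\ \ \ } & M_1 \oplus I^0 \ar[r] & E \ar@{-->}[r] & }$$
whose middle term lies in $\mathcal{M}$, reducing the problem to showing $E \in \mathcal{M}$.

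The core technical step is verifying $E \in \mathcal{M}$, which I would do via condition~(2). Apply $\mathcal{C}(-, M)$ to the above $\mathfrak{s}$-triangle for arbitrary $M \in \mathcal{M}$ and invoke Lemma~\ref{lem2.5}. For degrees $j \in \{2, \ldots, n-1\}$ the segment $\mathbb{E}^{j-1}(X, M) \to \mathbb{E}^j(E, M) \to \mathbb{E}^j(M_1 \oplus I^0, M)$ has both flanks vanishing --- the left because $X \in {}^{\bot_{n-2}}\mathcal{M}$ and $j-1 \leq n-2$, the right because $M_1 \oplus I^0 \in \mathcal{M}$ and $j \leq n-1$ --- so $\mathbb{E}^j(E,M) = 0$. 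The case $j = 1$ must be handled separately via the segment $\mathcal{C}(M_1 \oplus I^0, M) \to \mathcal{C}(X, M) \to \mathbb{E}(E, M) \to \mathbb{E}(M_1 \oplus I^0, M) = 0$, where the left arrow is surjective precisely because $f$ is a left $\mathcal{M}$-approximation (any $h : X \to M$ factors as $h = g \circ f = (g, 0) \circ \left(\begin{smallmatrix} f \\ \iota \end{smallmatrix}\right)$), forcing $\mathbb{E}(E, M) = 0$.

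The main obstacle I anticipate is exactly this degree-$1$ case: it is the only place where the two hypotheses on $X$ must be used jointly --- namely, both the vanishing condition on the $\mathbb{E}^i$ and the functorial finiteness of $\mathcal{M}$ --- and the choice of $f$ as a left $\mathcal{M}$-approximation (rather than an arbitrary morphism into $\mathcal{M}$) is essential here. By contrast, the easy inclusion and the higher-degree vanishing for $E$ are straightforward bookkeeping on the long exact sequence. Once $E \in \mathcal{M}$ is established, the reverse inclusion follows, and applying the whole argument in the opposite extriangulated category yields $\mathcal{M}_R = \mathcal{M}^{\bot_{n-2}}$.
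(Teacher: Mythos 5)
Your proposal is correct and follows essentially the same route as the paper: the easy inclusion via the segment $\mathbb{E}^i(M_1,M)\to\mathbb{E}^i(X,M)\to\mathbb{E}^{i+1}(M_2,M)$, and the reverse inclusion by combining the injective $\mathfrak{s}$-triangle with a left $\mathcal{M}$-approximation through Lemma \ref{2.2}, then killing $\mathbb{E}^j(E,M)$ in higher degrees by the long exact sequence and in degree one by the approximation property. Your explicit remark that $\mathcal{I}\subseteq\mathcal{M}$ is a small clarification the paper leaves implicit; otherwise the arguments coincide.
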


\begin{proof}
We only prove the first equation. Let $X\in^{\bot_{n-2}}\mathcal{M}$ and $\xymatrix{X\ar[r]^{x}&I\ar[r]^{y} & C\ar@{-->}[r]^{\delta}&}$ be an $\mathfrak{s}$-triangle with $I\in\mathcal{I}$. Assume that $f:X\rightarrow M$ is a left $\mathcal{M}$-approximation of $X$. By Lemma \ref{2.2}, we have the following morphism of  $\mathfrak{s}$-triangles
$$\xymatrix{
 X\ar[r]^{x}\ar[d]^{f} & I \ar[r]^{y}\ar[d]^{g} & C\ar@{-->}[r]^{\delta}\ar@{=}[d] & \\
 M\ar[r]^{x'} & C' \ar[r]^{y'} & C \ar@{-->}[r]^{f_*\delta}&
}$$ such that $\xymatrix{X\ar[r]^{\left(
                                    \begin{smallmatrix}
                                      f \\
                                      x \\
                                    \end{smallmatrix}
                                  \right)
}&M\oplus I\ar[r]^{(x',-g)} & C'\ar@{-->}[r]^{y'^*\delta}&}$ is an $\mathfrak{s}$-triangle. We claim that $C'\in \mathcal{M}$, thus $X\in \mathcal{M}_L$. In fact, for each $M'\in\mathcal{M}$, by Lemma \ref{lem2.5} we have the following exact sequence
$$\mathcal{C}(C',M')\rightarrow\mathcal{C}(M\oplus I,M')\rightarrow\mathcal{C}(X,M')\rightarrow\mathbb{E}(C',M')\rightarrow\mathbb{E}(M\oplus I,M')\rightarrow\mathbb{E}(X,M')$$
$$\rightarrow\cdots\rightarrow\mathbb{E}^i(M\oplus I,M')\rightarrow\mathbb{E}^i(X,M')\rightarrow\mathbb{E}^{i+1}(C',M')\rightarrow
\mathbb{E}^{i+1}(M\oplus I,M')\rightarrow\cdots.$$
Since $\mathbb{E}^{i}(M\oplus I,M')=\mathbb{E}^{i+1}(M\oplus I,M')=0$, we get $\mathbb{E}^i(X,M')\cong\mathbb{E}^{i+1}(C',M')$ for $i\in\{1,2,\cdots,n-2\}$. Thus, as $X\in^{\bot_{n-2}}\mathcal{M}$,  $\mathbb{E}^{i}(C',M')=0$ if $2\leq i\leq n-1$. Noting that $f:X\rightarrow M$ is a left $\mathcal{M}$-approximation, we have $\mathbb{E}(C',M')=0$. Therefore $C'\in\mathcal{M}$.

Suppose that $X\in\mathcal{M}_L$. There exists an $\mathfrak{s}$-triangle $\xymatrix{X\ar[r]^{f_1}&M_1\ar[r]^{f_2} & M_2\ar@{-->}[r]^{\delta}&}$ with $M_1, M_2\in\mathcal{M}$. For any $M\in\mathcal{M}$, we have an exact sequence $$0=\mathbb{E}^i(M_1,M)\rightarrow\mathbb{E}^i(X,M)\rightarrow\mathbb{E}^{i+1}(M_2,M)=0$$ for $i\in\{1,2,\cdots,n-2\}$. Therefore, $X\in^{\bot_{n-2}}\mathcal{M}$.
\end{proof}

\begin{cor}\label{cor5.1}
Let $\mathcal{C}$ be an extriangulated category with enough projectives $\mathcal{P}$ and enough  injectives $\mathcal{I}$. If $\mathcal{M}$ is a cluster tilting subcategory of $\mathcal{C}$, then

\textup{(1)} $\mathcal{C}/[\mathcal{M}]\cong\textup{mod-}(\mathcal{M}/[\mathcal{P}])\cong(\textup{mod-}(\mathcal{M}/[\mathcal{I}])^{\textup{op}})^{\textup{op}}$.

\textup{(2)} $\mathcal{C}/[\Omega\mathcal{M}]\cong(\textup{mod-}(\mathcal{M}/[\mathcal{P}])^{\textup{op}})^{\textup{op}}$.

\textup{(3)} $\mathcal{C}/[\Sigma\mathcal{M}]\cong\textup{mod-}(\mathcal{M}/[\mathcal{I}])$.
\end{cor}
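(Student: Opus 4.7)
The plan is to observe that for a cluster tilting subcategory, the previous proposition forces $\mathcal{M}_L=\mathcal{M}_R=\mathcal{C}$, so that the corollary becomes a direct specialization of Theorem \ref{thm4.1}. By definition, ``cluster tilting'' means $2$-cluster tilting, and for $n=2$ the index set $\{1,2,\ldots,n-2\}$ is empty. Hence ${}^{\bot_{0}}\mathcal{M}$ and $\mathcal{M}^{\bot_{0}}$ are the whole of $\mathcal{C}$ by vacuous truth, and the previous proposition gives $\mathcal{M}_L={}^{\bot_0}\mathcal{M}=\mathcal{C}$ and $\mathcal{M}_R=\mathcal{M}^{\bot_0}=\mathcal{C}$.

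Next, I would verify the hypothesis that $\mathcal{M}$ contains both $\mathcal{P}$ and $\mathcal{I}$, so that Theorem \ref{thm4.1} can be applied in both halves. This is immediate: if $P\in\mathcal{P}$ then $\mathbb{E}(P,\mathcal{M})=0$, so the cluster tilting characterization forces $P\in\mathcal{M}$; dually $\mathcal{I}\subseteq\mathcal{M}$.

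With these two observations in place, Theorem \ref{thm4.1}(1) applied with $\mathcal{M}_L=\mathcal{C}$ delivers $\mathcal{C}/[\mathcal{M}]\cong\textup{mod-}(\mathcal{M}/[\mathcal{P}])$, which is the first isomorphism of (1), and $\mathcal{C}/[\Omega\mathcal{M}]\cong(\textup{mod-}(\mathcal{M}/[\mathcal{P}])^{\textup{op}})^{\textup{op}}$, which is (2). Dually, Theorem \ref{thm4.1}(2) applied with $\mathcal{M}_R=\mathcal{C}$ yields $\mathcal{C}/[\mathcal{M}]\cong(\textup{mod-}(\mathcal{M}/[\mathcal{I}])^{\textup{op}})^{\textup{op}}$, completing (1), and $\mathcal{C}/[\Sigma\mathcal{M}]\cong\textup{mod-}(\mathcal{M}/[\mathcal{I}])$, which is (3).

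The corollary is essentially a tautological specialization, so there is no real obstacle; the whole content has already been absorbed into Theorem \ref{thm4.1} and the preceding proposition identifying $\mathcal{M}_L$ and $\mathcal{M}_R$ for $n$-cluster tilting subcategories.
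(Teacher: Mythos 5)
Your proof is correct and follows essentially the same route as the paper: the preceding proposition gives $\mathcal{M}_L=\mathcal{M}_R=\mathcal{C}$ (vacuously, since the index set is empty for $n=2$), and the corollary is then a direct specialization of Theorem \ref{thm4.1}. Your explicit check that $\mathcal{P},\mathcal{I}\subseteq\mathcal{M}$ via the cluster tilting characterization is a small but welcome addition that the paper leaves implicit.
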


\begin{rem}
Let $\mathcal{M}$ be a cluster tilting subcategory of an extriangulated category $\mathcal{C}$. Then the quotient category $\mathcal{C}/[\mathcal{M}]$ is abelian can follows from the theory of cotorsion pairs on extriangulated categories or that of one-sided triangulated categories; see \cite[Theorem 3.2]{[LN]} and \cite[Theorem 3.3]{[HZ]}.
\end{rem}

\end{document}